\numberwithin{equation}{section}
\newcommand{\R}{\mathbb{R}}
\newcommand{\ve}{\varepsilon}
\newcommand{\de}{\partial}
\newcommand{\n}{\nabla}
\newcommand{\N}{\mathbb{N}}
\renewcommand{\(}{\left(}
\renewcommand{\)}{\right)}
\newcommand{\M}[1]{\mathcal{#1}}
\newtheorem{thm}{Theorem}[section]
\newtheorem*{thm*}{Theorem}
\newtheorem{lem}{Lemma}[section]
\newtheorem{cor}{Corollary}[section]
\newtheorem*{cor*}{Corollary}
\newtheorem{prop}{Proposition}[section]
\newtheorem{rem}{Remark}[section]
\newtheorem*{opbm}{Open problem}
\begin{document}
\title[Critical points of the Trudinger-Moser functional]{Critical points of arbitrary energy for the Trudinger-Moser functional in planar domains}

\begin{abstract}
Given a smoothly bounded non-contractible domain $\Omega\subset \mathbb{R}^2$, we prove the existence of positive critical points of the Trudinger-Moser embedding for arbitrary Dirichlet energies. This is done via degree theory, sharp compactness estimates and a topological argument relying on the Poincar\'e-Hopf theorem.\\
\noindent \textsc{\bf Keywords:} Moser-Trudinger, critical inequalities, Leray-Schauder degree, Poincar\'e-Hopf theorem.
\end{abstract}

\author[A. Malchiodi]{Andrea Malchiodi}\thanks{A.M. is supported by the project \emph{Geometric problems with loss of compactness} from Scuola Normale Superiore and by the PRIN Project 2022AKNSE4 \emph{Variational and Analytical aspects of Geometric PDE}. He is also member of GNAMPA as part of INdAM}
\address[Andrea Malchiodi]{Scuola Normale Superiore, Piazza dei Cavalieri 7, 56126 Pisa, Italy}
\email{andrea.malchiodi@sns.it}

\author[L. Martinazzi]{Luca Martinazzi}\thanks{L.M. is supported by the Swiss National Science Foundation, grant n. P2BSP2 - 172064, the Fondazione Cariplo, grant n. 2022-2118, and by the PRIN Project 2022PJ9EFL \emph{Geometric Measure Theory: Structure of Singular Measures, Regularity Theory and Applications in the Calculus of Variations}}
\address[Luca Martinazzi]{Dipartimento di Matematica Guido Castelnuovo, Universit\`a La Sapienza, Piazzale Aldo Moro 5, 00185 Roma, Italy}
\email{luca.martinazzi@uniroma1.it}

\author[P.-D. Thizy]{Pierre-Damien Thizy}\thanks{P.T. is supported
by the ANR JCJC BLADE-JC, FRMARA (FR 3490) and SENS 2023 (UCBL)}
\address[Pierre-Damien Thizy]{Institut Camille Jordan, Universit\'e Claude Bernard Lyon 1, B\^atiment Braconnier, 21 avenue Claude Bernard, 69622 Villeurbanne Cedex, France}
\email{pierre-damien.thizy@univ-lyon1.fr}

\maketitle

\section{Introduction}
Given a smoothly bounded domain $\Omega\subset \mathbb{R}^2$, i.e. a bounded domain with smooth boundary, let $H^1_0=H^1_0(\Omega)$ be the usual Sobolev space of functions with zero trace on $\partial\Omega$ endowed with the norm $\|\cdot\|_{H^1_0}$ given by
$$\|u\|_{H^1_0}^2=\int_\Omega |\nabla u|^2 dx.$$
Given any positive real number $\beta>0$, our main purpose is to discuss the existence of nonnegative critical points of the classical 
Moser-Trudinger functional
\begin{equation}\label{MTFunctional}
F(u):=\int_\Omega \left(e^{u^2}-1\right) dx\,,
\end{equation}
constrained to the submanifold
\begin{equation}\label{DefMBeta}
\mathcal{M}_\beta:=\left\{v\in H^1_0~:~\|v\|^2_{H^1_0}=\beta \right\} 
\end{equation}
of $H^1_0$. This is equivalent to finding a solution of the following elliptic problem
\begin{equation}\label{MainEq}\tag{$\mathcal{E}_\beta$}
\begin{cases}
&\Delta u= 2\lambda u e^{u^2}\,,\, u>0\,\text{ in }\Omega\,,\\
&u=0\text{ on }\partial \Omega\,,\\
&\int_\Omega |\nabla u|^2 dx=\beta\,,
\end{cases}
\end{equation}
for some real number $\lambda$, where $\Delta=-\partial_{xx}-\partial_{yy}$ is the (nonnegative) Laplacian.

\medskip

When $\beta>4\pi$, this is a long-standing open problem, that can be traced back to the seminal work of Struwe \cite{StruweCrit}, as we shall briefly explain.

Let us first state the celebrated Moser-Trudinger inequality \cite{MoserIneq, Poho65, TrudingerOrlicz} as
\begin{equation}\label{MTineq}\tag{$\mathcal{MT}$}
\sup_{u\in \mathcal{M}_\beta} F(u)<\infty\quad \text{if and only if }\beta\le 4\pi.
\end{equation}
A simple consequence of \eqref{MTineq} is that for $\beta\in (0,4\pi)$ the functional $F|_{\mathcal{M}_\beta}$ admits a maximizer. The existence of maximizers even in the critical case $\beta= 4\pi$, when the Palais-Smale condition does not hold anymore (see \cite{AdimurthiFailure}), is much more delicate, and was proven by Carleson-Chang \cite{CarlesonChang} when $\Omega$ is a disk and extended to general bounded domains by Struwe \cite{StruweCrit} and Flucher \cite{Flucher}.

The existence of critical points of $F|_{\mathcal{M}_\beta}$ in the supercritical regime $\beta> 4\pi$ is even more delicate, due to the critical nature of the nonlinearity which, in addition to the failure of the Palais-Smale condition for every $\beta\ge 4\pi$ (as shown in \cite{CostaTintarev}), makes the blow-up estimates very subtle and a priori leaves open possibility of no critical points for any $\beta>4\pi$. Instead, relying on the concentration-compactness principle of Lions and on his own monotonicity trick, still in \cite{StruweCrit}, Struwe proved the existence of $\ve_0>0$ such that $F|_{\mathcal{M}_\beta}$ has a positive local maximizer for every $\beta\in (4\pi,4\pi+\ve_0)$ and a mountain pass-type positive critical point for \emph{almost every} $\beta\in (4\pi,4\pi+\ve_0)$. The existence of a second positive critical point for \emph{every} $\beta\in (4\pi,4\pi+\ve_0)$ was proven by Lamm-Robert-Struwe \cite{LammRobertStruwe} using a flow approach. In these results, there is no condition on the smoothly bounded domain $\Omega$, but on the other hand, the requirement that $\beta$ is close to $4\pi$ is essential. This, of course, leaves open the question of the existence of critical points of $F|_{\mathcal{M}_\beta}$ for $\beta$ much larger than $4\pi$.

That this is not just a technical issue is confirmed by the result of the first and second authors \cite{MalchMartJEMS}, proving that when $\Omega$ is a disk  $F|_{\mathcal{M}_\beta}$ has no positive critical point for $\beta$ sufficiently large. 
On the other hand, Del Pino-Musso-Ruf \cite{DelPBeyond}, in the case in which $\Omega$ is not simply connected and $\beta$ is close to $8\pi$, or $\Omega$ is an annulus and $\beta$ is close to $4\pi N$ for some $N\in \mathbb{N}^\star:=\mathbb{N}\setminus\{0\}$, used a Lyapunov-Schmidt construction to show the existence of critical points of $F|_{\mathcal{M}_\beta}$. This suggests that the topology of $\Omega$ plays a crucial role in the solvability of Problem \eqref{MainEq}, and justifies the assumption of our main result, which reads as follows (see also Remark \ref{NonAutonomousVersion} below):
\begin{thm}\label{MainThm}
Let $\Omega\subset \mathbb{R}^2$ be a smoothly bounded non-contractible domain. Then, given any positive real number $\beta>0$, there exists a nonnegative function $u$, critical point of $F$ constrained to $\mathcal{M}_\beta\subset H^1_0$. In particular, $u$ is smooth  and solves \eqref{MainEq}.
\end{thm}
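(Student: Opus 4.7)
The plan is to attack \eqref{MainEq} via a Leray--Schauder degree computation for a compact fixed-point reformulation, combined with a sharp blow-up analysis and a topological argument on an associated finite-dimensional reduced energy. Concretely, I would recast \eqref{MainEq} by writing $u=\sqrt{\beta}\,v/\|v\|_{H^1_0}$ and introducing a compact map $T_\beta\colon H^1_0\to H^1_0$ whose fixed points correspond to solutions of \eqref{MainEq} with the prescribed Dirichlet energy $\beta$ (the Moser-Trudinger inequality \eqref{MTineq} at subcritical level provides the needed compactness of the nonlinear term $u e^{u^2}$). Positivity of the resulting solutions would then follow from the maximum principle once nonnegativity is ensured by working with $u_+$ in the nonlinearity.

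For $\beta\in(0,4\pi)$ the existence of a maximizer and standard variational arguments give $\deg(\mathrm{Id}-T_\beta,B_R,0)=1$ on a large ball $B_R\subset H^1_0$. The core task is then to analyze how the degree propagates across the critical thresholds $\beta=4\pi N$, $N\in\mathbb{N}^\star$. For this one needs a sharp blow-up quantization stating that any sequence of solutions $u_k$ of $(\mathcal{E}_{\beta_k})$ with $\beta_k\to\bar\beta$ either converges strongly in $H^1_0$, or $\bar\beta=4\pi N$ and the $u_k$ develop exactly $N$ standard bubbles concentrating at $N$ distinct points $\xi^*=(\xi^*_1,\ldots,\xi^*_N)\in\Omega^N$, with $\xi^*$ a critical point of a reduced functional $\mathcal{F}_N$ built explicitly from the Green's function $G$ and the Robin function $H$ of $\Delta$ on $\Omega$. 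This blow-up picture is the genuine analytic heart of the argument.

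Granted this reduction, the jump of the Leray--Schauder degree at $\beta=4\pi N$ can be computed as the Poincaré--Hopf contribution of $-\nabla\mathcal{F}_N$ on the open configuration space $\Omega^{(N)}:=\{(\xi_1,\ldots,\xi_N)\in\Omega^N\,:\,\xi_i\neq\xi_j\text{ for }i\neq j\}$. The non-contractibility of $\Omega$ yields a nontrivial $H_1(\Omega)$, which produces a nonzero Euler-characteristic contribution of $\Omega^{(N)}$ along the boundary behavior of $\nabla\mathcal{F}_N$ (since $\mathcal{F}_N$ diverges to $+\infty$ as configurations collapse onto diagonals or approach $\partial\Omega$, the index computation localizes in the interior). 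Tracking this yields $\deg(\mathrm{Id}-T_\beta,B_R,0)\neq 0$ on every open interval $(4\pi(N-1),4\pi N)$, and hence a solution of \eqref{MainEq} for every such $\beta$. For the exceptional values $\beta=4\pi N$, I would take $\beta_k\to\beta$ from within the adjacent open intervals, pick solutions $u_k$, and argue by contradiction: if $u_k$ blows up, the concentration points yield a critical configuration of $\mathcal{F}_N$ whose local degree contribution is inconsistent with the total nonzero degree on both sides, so strong convergence must occur and the limit provides the desired critical point.

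The main obstacle I expect is the blow-up quantization and the precise identification of $\mathcal{F}_N$ together with sharp neck estimates. Unlike the Liouville equation $\Delta u=\rho e^u/\!\int e^u$, where the analogous quantization is by now classical, for the Trudinger--Moser nonlinearity $u e^{u^2}$ the absence of exact conformal invariance forces one to control the sub-logarithmic corrections of each bubble and the interaction terms at the scale $\sqrt{\log(1/\mu_k)}$ of the concentration parameters. Establishing this with enough precision to reduce to a genuine Poincaré--Hopf computation on $\Omega^{(N)}$, and making it compatible with the Leray--Schauder framework, is where the bulk of the technical work will lie and where the sharp compactness estimates advertised in the abstract play a decisive role.
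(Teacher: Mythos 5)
Your skeleton (fixed-point reformulation, degree theory, blow-up quantization, a reduced functional on configuration space, Poincar\'e--Hopf) is in the same spirit as the paper, but two of your key steps have genuine gaps. First, you propose to compute the degree jumps across $\beta=4\pi N$ \emph{directly} for the Trudinger--Moser nonlinearity from the blow-up quantization. Quantization is only a necessary condition (blow-up points form a critical configuration of the reduced functional); it does not yield a jump formula. For that you would need a full Lyapunov--Schmidt construction showing that each zero of $\nabla\mathcal{F}_N$ produces a family of blowing-up solutions whose local Leray--Schauder degree equals the local Brouwer degree of the reduced field -- exactly the step you flag as the ``main obstacle'' without offering a route, and which is currently unavailable for $u e^{u^2}$ (no conformal invariance, $\sqrt{\log}$-scale corrections). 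The paper avoids this entirely: it deforms \eqref{MainEq} through the $p$-family \eqref{IntermEq}, $p\in[1,2]$, down to the mean-field equation \eqref{MeanFieldEquation}, uses the compactness/quantization results (Theorem \ref{ThmBUp}, with Proposition \ref{PropMovPlane} excluding boundary blow-up) only to justify homotopy invariance of the degree for $\beta\notin 4\pi\mathbb{N}$, and then quotes Chen--Lin's established jump formula \eqref{degreejump} for the mean-field problem, so that the only new degree computation needed is the finite-dimensional one of Proposition \ref{p:degree}.

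Second, your Poincar\'e--Hopf step rests on the claim that the reduced functional tends to $+\infty$ both when points collide and when they approach $\partial\Omega$, so that the index localizes in the interior. This is false under any consistent sign convention: with \eqref{ConventionRegPartGreen}, $\mathcal{H}(x,x)\to-\infty$ as $x\to\partial\Omega$, so $\Phi_N\to-\infty$ when one point goes to the boundary, and $\nabla\Phi_N$ points outward only on part of $\partial(\Omega^N\setminus D_N)$; Poincar\'e--Hopf does not apply directly. Overcoming this is the paper's main new contribution (Section \ref{SectChenLin}): one bends the Robin function near $\partial\Omega$ so that the modified $\tilde\Phi_N\to+\infty$ on the whole boundary, applies Poincar\'e--Hopf to get $\chi(\Omega^N\setminus D_N)$, and then shows that the spurious critical points created near $\partial\Omega$ contribute zero total degree because the boundary collar retracts to circles, whose Euler characteristic vanishes (Lemmata \ref{l:deg-in-Theta-I}--\ref{l:degtildePhi2}). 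Finally, your treatment of the critical values $\beta\in4\pi\mathbb{N}^\star$ by a ``degree inconsistency'' contradiction is not valid: blow-up at level $4\pi N$ is perfectly compatible with well-defined nonzero degrees on both sides -- indeed the degree jump \emph{forces} blowing-up families to exist (Remark \ref{RemBupSol}). What actually closes this case in the paper is the one-sided energy estimate \eqref{eq:4pi+} (for $p>1$, blowing-up families satisfy $\beta_\ve>4\pi N$), which gives compactness of solutions for $\beta\in[4\pi(N+\eta),4\pi(N+1)]$ and hence a well-defined, nonzero degree at the endpoint itself; nothing in your proposal substitutes for this estimate.
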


\noindent Theorem \ref{MainThm}, complemented with the previous results when $\Omega$ is a disk, gives a fairly complete answer to the problem of the existence of solutions to \eqref{MainEq}.

The case in which the domain $\Omega$ is replaced by a two-dimensional closed surface was recently studied in \cite{DMMT} using the \emph{barycenter technique} of Djadli-Malchiodi \cite{DjaMal} (see also Bahri \cite{BahriBook}) taking advantage of the variational structure, a minmax argument together with Struwe's monotonicity trick to obtain solutions of a \emph{subcritical} approximate problem for almost every $\beta>0$, and blow-up analysis together with energy estimates at critical energy levels to conclude.

In order to prove Theorem \ref{MainThm}, we will use a different approach, based on the Leray-Schauder degree. This can be divided in four main steps. First, assuming that $\beta\not\in 4\pi\mathbb{N}$, we will deform Problem \eqref{MainEq} to link it to the \emph{Mean-Field} equation
\begin{equation}\label{MeanFieldEquation}\tag{$\mathcal{E}_{\beta}^{MF}$}
\begin{cases}
& \Delta u=\beta \frac{2 e^u}{\int_\Omega e^u dx}\,,\\
& u=0 \text{ on }\partial\Omega\,,
\end{cases}
\end{equation} 
studied e.g. in Ding-Jost-Li-Wang \cite{DJLWA}, and for which a degree theory approach has been initiated by Li \cite{YYLi} and developed by Chen and Lin \cite{ChenLinSharpEst,ChenLin-Liouville}.

More precisely, given $\beta>0$, we consider the $p$-dependent family of problems
\begin{equation}\label{IntermEq}\tag{$\mathcal{E}_{p,\beta}$}
\begin{cases}
&\Delta u= p\lambda u^{p-1} e^{u^p}\,,\, u>0\,\text{ in }\Omega\,,\\
&u=0\text{ on }\partial \Omega\,,\\
&\frac{\lambda p^2}{2} \left(\int_\Omega \left(e^{u^p}-1\right) dx\right)^{\frac{2-p}{p}}\left(\int_\Omega u^p e^{u^p} dx \right)^{\frac{2(p-1)}{p}}=\beta\,,
\end{cases}
\end{equation}
parametrized by $p\in[1,2]$. Notice that $(\mathcal{E}_{2,\beta})$ is equivalent to \eqref{MainEq} thanks to integration by parts, while $(\mathcal{E}_{1,\beta})$ can be easily deformed to \eqref{MeanFieldEquation}.

That these deformations preserve the Leray-Schauder degree is a consequence of the sharp compactness estimates that have been developed in a series of papers (\cite{DMMT,DruetDuke,MalchMartJEMS,MartMan}), and that we can adapt to the case of a bounded domain (Theorem \ref{ThmBUp}), upon ruling out boundary blow-up (thanks to Proposition \ref{PropMovPlane} below). In particular for every $N\in \mathbb{N}$ and $\beta\in (4\pi N, 4\pi (N+1))$ the Leray-Schauder degree $d_{p,\beta}(\Omega)$ is a constant, not depending on $p\in [1,2]$ and equal to the Leray-Schauder degree $d_{N}^{MF}(\Omega)$ of \eqref{MeanFieldEquation}.

Then it suffices to compute $d_N^{MF}(\Omega)$. That $d_0^{MF}(\Omega)=1$ is fairly easy, hence the main issue is to compute the degree jumps $d_{N+1}^{MF}(\Omega)-d_N^{MF}(\Omega)$ for any $N\in\mathbb{N}$, i.e. (roughly speaking) the number of blowing-up families of solutions $(u_\ve)$ to \eqref{MeanFieldEquation} with $\beta_\ve\to 4\pi(N+1)$.  Using a Lyapunov-Schmidt construction (a so-called finite dimensional reduction), Chen-Lin \cite{ChenLin-Liouville} reduced this problem to the computation of the Brouwer degree of a finite dimensional vector field, see Proposition \ref{p:degreejump}.

The third step is to compute the Brouwer degree of such vector field, i.e. Proposition \ref{p:degree}. The proof of this proposition, which is the most delicate part of this work and will be given in Section \ref{SectChenLin}, is based on a new topological argument and a blow-up analysis, which allow us to use the Poincar\'e-Hopf theorem even in the context of bounded domains, when the vector field does not always point outwards. We will state this result in more generality than needed here, because its relevance goes beyond its application to Theorem \ref{MainThm}. Moreover with minor changes, this result can also be carried and applied to higher even dimension (see comments after Proposition \ref{p:degreejump}, and Remark \ref{Odd}).

These 3 steps are sufficient to compute the Leray-Schauder degree of \eqref{MeanFieldEquation}, hence of \eqref{IntermEq} and \eqref{MainEq}, for every positive $\beta\not\in 4\pi \mathbb{N}$. Finally, in order to also include the case $\beta\in 4\pi\mathbb{N^\star}$, we prove that for $p\in (1,2]$ solutions to \eqref{IntermEq} are compact from the left (Theorem \ref{t:4pi+} below).

Collecting these results we finally obtain:

\begin{thm}\label{ThmDegree}
Let $\Omega\subset \mathbb{R}^2$ be a smoothly bounded domain. Let $p\in (1,2]$, $N\in \mathbb{N}$ and $\beta\in (4\pi N, 4\pi(N+1)]$ be given. Then the total Leray-Schauder degree $d_{p,\beta}(\Omega)$ of the solutions of \eqref{IntermEq} is well-defined and equals the binomial number $\binom{N-\chi(\Omega)}{N}$, where $\chi(\Omega)$ stands for the Euler characteristic of $\Omega$. In particular, if either $\Omega$ is not simply connected or if $N=0$, we have $d_{p,\beta}(\Omega)\neq 0$, so that the set $\mathcal{C}_{p,\beta}(\Omega)$ of the solutions of \eqref{IntermEq} is not empty.
\end{thm}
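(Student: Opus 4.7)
The plan is to carry out the four-step program already sketched in the introduction, combining sharp compactness estimates with the degree computation for the limit mean-field problem. First, for $\beta\notin 4\pi\mathbb{N}^\star$, I would write $(\mathcal{E}_{p,\beta})$ as a fixed-point equation $u=T_{p,\beta}(u)$ in $C^1_0(\overline\Omega)$, with $T_{p,\beta}$ compact. Theorem \ref{ThmBUp}, combined with the exclusion of boundary blow-up furnished by Proposition \ref{PropMovPlane}, yields a uniform $C^0$ bound on the solution set of $(\mathcal{E}_{p,\beta})$ valid for all $p\in[1,2]$ simultaneously, so $d_{p,\beta}(\Omega)$ is well-defined as the Leray-Schauder degree of $\Id-T_{p,\beta}$ on a sufficiently large ball. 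Since $(p,u)\mapsto T_{p,\beta}(u)$ is a continuous family of compact maps and no solution touches the boundary of the ball for any $p\in[1,2]$, homotopy invariance gives $d_{p,\beta}(\Omega)=d_{1,\beta}(\Omega)$, and a further elementary deformation (essentially a normalization of the Lagrange multiplier) identifies the latter with the degree $d_N^{MF}(\Omega)$ of $(\mathcal{E}_\beta^{MF})$ whenever $\beta\in(4\pi N,4\pi(N+1))$.

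Next, I would compute $d_N^{MF}(\Omega)$ by induction on $N$. For $N=0$ and small $\beta>0$, $(\mathcal{E}_\beta^{MF})$ has a unique non-degenerate solution bifurcating from $u\equiv 0$, so $d_0^{MF}(\Omega)=1=\binom{-\chi(\Omega)}{0}$. For the inductive step, Proposition \ref{p:degreejump} expresses the jump $d_N^{MF}(\Omega)-d_{N-1}^{MF}(\Omega)$ as the Brouwer degree of a certain finite-dimensional vector field on an open subset of $\Omega^N$, and Proposition \ref{p:degree} evaluates that degree as $\binom{N-1-\chi(\Omega)}{N}$. Telescoping and iterating the Pascal-type identity $\binom{N-\chi}{N}=\binom{N-1-\chi}{N}+\binom{N-1-\chi}{N-1}$ then yields
\begin{equation*}
d_N^{MF}(\Omega)=1+\sum_{k=1}^N\binom{k-1-\chi(\Omega)}{k}=\binom{N-\chi(\Omega)}{N},
\end{equation*}
as desired.

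To reach the right endpoint $\beta=4\pi(N+1)$, which is excluded by the previous argument since the mean-field problem is not compact there, I would use that for $p\in(1,2]$ Theorem \ref{t:4pi+} provides compactness from the left of the solution set of $(\mathcal{E}_{p,\beta})$ as $\beta\nearrow 4\pi(N+1)$. Homotopy invariance of the Leray-Schauder degree along $\beta$ then extends $d_{p,\beta}(\Omega)=\binom{N-\chi(\Omega)}{N}$ from the open interval $(4\pi N,4\pi(N+1))$ to the closed one $(4\pi N,4\pi(N+1)]$. The non-vanishing statement follows at once: $\chi(\Omega)\leq 0$ whenever $\Omega$ is not simply connected, hence $\binom{N-\chi(\Omega)}{N}\geq 1$ in that case, and $\binom{-\chi(\Omega)}{0}=1$ for $N=0$, so $\mathcal{C}_{p,\beta}(\Omega)$ is nonempty by the standard existence property of a nonzero degree.

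The genuine difficulty in this scheme lies entirely upstream of Theorem \ref{ThmDegree}: once Propositions \ref{PropMovPlane}, \ref{p:degreejump}, \ref{p:degree} and Theorem \ref{t:4pi+} are in hand, the theorem is obtained by the bookkeeping above. The main obstacle is Proposition \ref{p:degree}, whose proof in Section \ref{SectChenLin} rests on a new topological argument that invokes the Poincar\'e-Hopf theorem in a setting where the relevant vector field does not point outward along the boundary; this is the only place where the inductive identification with the binomial $\binom{N-1-\chi(\Omega)}{N}$ could fail, and hence where the formula of Theorem \ref{ThmDegree} is ultimately determined.
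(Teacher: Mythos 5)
Your overall scheme is the paper's: compactness from Theorem \ref{ThmBUp} (with boundary blow-up excluded via Proposition \ref{PropMovPlane}) for $\beta\notin 4\pi\mathbb{N}^\star$, homotopy invariance in $p$ down to $p=1$ followed by an interpolation of the normalizations linking $(\mathcal{E}_{1,\beta})$ to \eqref{MeanFieldEquation}, the Chen--Lin jump formula together with Proposition \ref{p:degree} and Pascal's rule to produce $\binom{N-\chi(\Omega)}{N}$, and finally Theorem \ref{t:4pi+} plus a homotopy in $\beta$ on $[4\pi(N+\eta),4\pi(N+1)]$ to reach the endpoint. Your telescoping $1+\sum_{k=1}^N\binom{k-1-\chi(\Omega)}{k}=\binom{N-\chi(\Omega)}{N}$ is correct, and your bookkeeping of the jump (the crossing of $4\pi N$ expressed through the $N$-point vector field, contributing $\binom{N-1-\chi(\Omega)}{N}$) is the reading consistent with the final formula; also note that $d_0^{MF}(\Omega)=1$ is part of the quoted Proposition \ref{p:degreejump}, so no separate small-$\beta$ bifurcation analysis is needed.

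The one step that does not survive as written is defining $d_{p,\beta}(\Omega)$ as the Leray--Schauder degree of $u\mapsto u-T_{p,\beta}(u)$ ``on a sufficiently large ball''. The map $T_{p,\beta}$ is not defined, let alone continuous and compact, on a ball of $C^1_0(\bar\Omega)$ centered at the origin: at $v\equiv 0$ the normalizing integrals vanish, $v^{p-1}$ is meaningless for sign-changing $v$ when $p\in(1,2)$, and even where an extension is possible the degree must count only the positive solutions required in \eqref{IntermEq}, with no solution of any deformed problem sitting on the boundary of the chosen open set --- a genuine concern as $p\to1$, where $(p,t)\mapsto pt^{p-1}e^{t^p}$ is discontinuous at $t=0^+$. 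This is precisely why the paper computes the degree on $\mathcal{V}=\{v\in C^1_0(\bar\Omega):\ v>c\,v_1 \text{ in }\Omega,\ \|v\|_{C^1(\bar\Omega)}<C\}$: the lower barrier $c\,v_1$, obtained from $0<2\lambda\le\lambda_1$ in \eqref{ControlLambda1} combined with elliptic theory and Hopf's lemma, ensures that the closure of $\mathcal{V}$ contains only functions positive in $\Omega$ (neither the zero function nor sign-changing ones), so that all the homotopies in $p$, in the mean-field normalization parameter $t$, and in $\beta$ run with no zeros on $\partial\mathcal{V}$. Replacing your ball by $\mathcal{V}$ (and its analogue $\mathcal{V}'$ in the endpoint case $\beta=4\pi(N+1)$), the rest of your argument coincides with the paper's proof.
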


\noindent We adopt here the usual convention that $\binom{-1}{0}=1$ and $\binom{N-1}{N}=0$ for $N\in \mathbb{N}^\star$. We recall that the genus $g(\Omega):=1-\chi(\Omega)$ is basically the ``number of holes'' inside $\Omega$, so that $\Omega$ is simply connected if and only if $\chi(\Omega)=1$. 

\medskip

Interestingly enough, given $p\in[1,2)$, our topological assumption on $\Omega$ is sharp in the following sense: \eqref{IntermEq} does not have any solution for $\beta$ sufficiently large if $\Omega$ is \emph{simply connected} (see \cite{BattagliaUnifBd,BatManThi}). Yet, for $p=2$, we state the following delicate question which looks  open, except in the aforementioned case of the disk:
\begin{opbm}
Let $\Omega\subset \mathbb{R}^2$ be a \emph{simply connected} domain. Does there exists $\beta^\sharp>4\pi$ such that \eqref{MainEq} does not have any solution for $\beta>\beta^\sharp$?
\end{opbm}
\noindent To conclude this introductory part, we mention that the degree formula in Theorem \ref{ThmDegree} also implies the existence of blowing-up solutions for \eqref{IntermEq} (see Remark \ref{RemBupSol}).

\section{Proof of Theorems \ref{MainThm} and \ref{ThmDegree}}
Let us first notice that any nonnegative nontrivial weak $H^1_0$-solution of the PDE in \eqref{IntermEq} is smooth and positive in $\Omega$, as proven from Trudinger \cite{TrudingerOrlicz} with standard elliptic theory (see for instance Gilbarg-Trudinger \cite{Gilbarg}).

As a first ingredient, since the nonlinearities in the right-hand of the PDE in \eqref{IntermEq} are autonomous, we may use the following by-product of the moving plane technique, already used for instance by Adimurthi and Druet \cite{AdimurthiDruet} in a similar context:
\begin{prop}\label{PropMovPlane}
Let $\Omega\subset \mathbb{R}^2$ be a smoothly bounded domain. Then there exists $\delta,\delta'>0$ depending only on $\Omega$ such that, for any nonnegative, nondecreasing $C^1$-function $f:[0,+\infty)\to \mathbb{R}$, for all $v$ solving
\begin{equation*}
\begin{cases}
&\Delta v=f(v), v>0\text{ in }\Omega\,,\\
&v=0\text{ on }\partial\Omega\,,
\end{cases}
\end{equation*}
and all $x\in \partial \Omega$, the function $t\mapsto v(x-t\nu(x))$ has positive derivative in $[0,\delta]$, where $\nu(x)$ is the unit outward normal to $\partial\Omega$ at $x$, and we have
\begin{equation}\label{CritFarBdry}
\nabla v(x)=0 \quad\implies\quad d(x,\partial\Omega)\ge \delta'>0\,.
\end{equation}
\end{prop}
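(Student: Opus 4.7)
The plan is to apply Serrin's moving-plane method at each boundary point, localized to a neighborhood whose size is controlled purely by the geometry of $\Omega$, so as to obtain strict monotonicity of $v$ along inward normal directions near $\partial\Omega$; the bound \eqref{CritFarBdry} will then be an immediate consequence.

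The first step is geometric: by the smoothness and compactness of $\partial\Omega$, one fixes $\delta>0$ depending only on $\Omega$ with the following property. For every $x_0\in\partial\Omega$, working in coordinates where $x_0=0$ and $\nu(x_0)=e_2$, the piece of $\partial\Omega$ near $x_0$ is a uniform $C^2$ graph over the tangent line, and for every $\lambda\in(-\delta,0)$ the cap $\Sigma^{x_0}_\lambda:=\Omega\cap\{y>\lambda\}$ is a thin region whose reflection across $\{y=\lambda\}$ is contained in $\Omega$. This is purely geometric, ensured by the uniform interior ball condition together with the uniform $C^2$ graph description of $\partial\Omega$.

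Next, fix such an $x_0$ and $\lambda\in(-\delta,0)$; on $\overline{\Sigma^{x_0}_\lambda}$ set $v^\lambda(x,y):=v(x,2\lambda-y)$ and $w_\lambda:=v^\lambda-v$. Reflection invariance of $\Delta$ and the autonomy of the equation yield $\Delta v^\lambda=f(v^\lambda)$, while the monotonicity of $f$ gives
\[
\Delta w_\lambda = c_\lambda(x,y)\,w_\lambda,
\qquad
c_\lambda := \frac{f(v^\lambda)-f(v)}{v^\lambda-v}\geq 0.
\]
On $\partial\Sigma^{x_0}_\lambda$, $w_\lambda\equiv 0$ along the flat face $\{y=\lambda\}\cap\Omega$, and $w_\lambda>0$ along $\partial\Omega\cap\overline{\Sigma^{x_0}_\lambda}$ (since $v=0$ there while $v^\lambda>0$, as the reflected points sit strictly inside $\Omega$). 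A maximum principle for $\Delta-c_\lambda$ on the thin cap then gives $w_\lambda\geq 0$; Hopf's lemma on the flat face produces $\partial_y w_\lambda|_{y=\lambda}=-2\,\partial_y v(\cdot,\lambda)>0$, i.e., $\partial_y v<0$ along $\{y=\lambda\}\cap\Omega$. Letting $\lambda$ range over $(-\delta,0)$ is exactly the required positivity of $\frac{d}{dt}v(x_0-t\nu(x_0))$ on $(0,\delta)$; at $t=0$ the conclusion follows from Hopf's lemma applied directly to the superharmonic $v$.

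The main obstacle is the claimed uniformity of $\delta$ in $v$ and $f$, since the naive narrow-domain maximum principle imposes a width constraint involving $\|c_\lambda\|_\infty\leq\sup_{[0,\|v\|_\infty]}f'$, which a priori depends on the solution. To remove this dependence, one exploits the positivity of $v$ itself on $\Sigma^{x_0}_\lambda$ by running the maximum principle on the quotient $w_\lambda/v$ (equivalently, using $v$ as a positive test function in an integral form of the comparison), so that the smallness required of the cap is governed by $\Omega$ alone; this is the Adimurthi--Druet-style adaptation of moving planes to the autonomous setting with $f$ nondecreasing. Once the monotonicity is in place, \eqref{CritFarBdry} is immediate: any $x\in\Omega$ with $d(x,\partial\Omega)<\delta$ lies on some normal segment $x_0-t\nu(x_0)$ with $t\in(0,\delta)$, and the monotonicity forces $\nabla v(x)\cdot\nu(x_0)<0$, hence $\nabla v(x)\neq 0$; thus $\delta':=\delta$ works.
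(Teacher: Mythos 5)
Your argument works only where the reflected cap actually stays inside $\Omega$, and that is exactly the point where the proposal breaks down. The claim that for every $x_0\in\partial\Omega$ and every $\lambda\in(-\delta,0)$ the cap $\Sigma^{x_0}_\lambda=\Omega\cap\{y>\lambda\}$ is a thin set whose reflection across $\{y=\lambda\}$ lies in $\Omega$ is false for non-convex domains: at a point of the inner boundary of an annulus, for instance, the outward normal points into the hole, and $\Omega\cap\{y>\lambda\}$ contains the whole far side of the annulus, whose reflection lands in the hole or outside $\Omega$. No interior ball condition or uniform $C^2$ graph description can repair this; reflection containment of caps is a convexity-type property, not a regularity property. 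If instead you localize the cap to a neighborhood of $x_0$, as you suggest at the outset, the cap acquires a lateral boundary interior to $\Omega$ on which you have no sign information for $w_\lambda=v^\lambda-v$, and the comparison argument collapses. Since the paper's main theorem concerns non-contractible (hence non-convex) domains, this is not a peripheral case: the proposition is only as good as its validity there. The paper's proof handles it by citing de Figueiredo--Lions--Nussbaum for convex domains and, crucially, by combining the moving plane argument with a Kelvin transform (as observed by Han) in the general two-dimensional case; this transform is what replaces your false geometric containment, and it is entirely absent from your proposal.

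A secondary issue is your treatment of uniformity. The classical argument does not obtain a solution-independent $\delta$ from a narrow-domain or small-measure maximum principle; it starts the plane with a possibly solution-dependent step (Hopf's lemma for $v$ itself) and then continues, via the strong maximum principle and Hopf's lemma applied to the nonnegative supersolution $w_\lambda$ (which need no quantitative bound on $c_\lambda$), until the geometric obstruction at distance $\delta(\Omega)$ is reached; uniformity comes from the fact that the stopping position is purely geometric. Your proposed remedy, running the maximum principle on $w_\lambda/v$ so that $v$ serves as a positive supersolution of $\Delta+c_\lambda$, would require $c_\lambda v\le f(v)$, essentially $f(t)/t$ nonincreasing, which fails for the superlinear exponential nonlinearities relevant here; as stated it is not justified. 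The deduction of \eqref{CritFarBdry} from the normal monotonicity via the tubular neighborhood of $\partial\Omega$ is fine and matches the paper.
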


\begin{proof}[Proof of Proposition \ref{PropMovPlane} (completed)] The existence of $\delta>0$ as in the statement of the proposition is a direct consequence of the classical moving plane argument in de Figueiredo-Lions-Nussbaum \cite{deFigLions} if $\Omega$ is convex, while it has to be combined with Kelvin's transform in our general two-dimensional case, as observed by Han \cite{Han}. Then, since the map $(t,x)\mapsto x-t\nu(x)$ is a diffeomorphism from $[0,\delta]\times \partial \Omega$ onto a neighborhood of $\partial \Omega$ inside $\bar{\Omega}$, up to reducing $\delta>0$, \eqref{CritFarBdry} follows by compactness of $\partial\Omega$. 
\end{proof}

Proposition \ref{PropMovPlane} is an important ingredient to take advantage of the blow-up analysis developed to get \cite[Theorem 4.1]{DMMT} on a closed surface in our present setting of a bounded domain $\Omega$, ensuring that no new phenomena arise close to the boundary. Hence we have the following theorem:

\begin{thm}\label{ThmBUp}
 Let $(\lambda_\varepsilon)_\varepsilon$ be any sequence of positive real numbers and $(p_\varepsilon)_\varepsilon$ be any sequence of numbers in $[1,2]$. Let $(u_\varepsilon)_\varepsilon$ be a sequence of smooth functions solving 
\begin{equation}\label{EqBup}
\begin{cases}
&\Delta u_\varepsilon=p_\varepsilon \lambda_\varepsilon  u_\varepsilon^{p_\varepsilon-1} e^{u_\varepsilon^{p_\varepsilon}}\,,\quad u_\varepsilon>0\text{ in }\Omega\,,\\
& u_\varepsilon=0\quad\text{ on }\partial \Omega\,,
\end{cases}
\end{equation}
for all $\varepsilon$. Let $\beta_\varepsilon$ be given by
\begin{equation*}
\beta_\varepsilon=\frac{\lambda_\varepsilon p_\varepsilon^2}{2} \left( \int_\Omega  \left(e^{u_\varepsilon^{p_\varepsilon}}-1\right)~dx\right)^{\frac{2-p_\varepsilon}{p_\varepsilon}}\left(  \int_\Omega u_\varepsilon^{p_\varepsilon} e^{u_\varepsilon^{p_\varepsilon}}~dx\right)^{\frac{2(p_\varepsilon-1)}{p_\varepsilon}}
\end{equation*}
for all $\varepsilon$. If we assume the energy bound
\begin{equation*}
\lim_{\varepsilon\to 0}\beta_\varepsilon=\beta \in [0,+\infty)\,,
\end{equation*}
but the pointwise blow-up of the $u_\varepsilon$'s, namely $\lim_{\varepsilon\to 0} \max_{\Omega} u_\varepsilon=+\infty\,,
$ then $\lambda_\varepsilon\to 0^+$ and there exists an integer $N\in \mathbb{N}^\star$ such that $\beta=4\pi N\,.$
\end{thm}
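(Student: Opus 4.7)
The plan is to reduce the analysis to the closed-surface case treated in \cite[Theorem 4.1]{DMMT}, using Proposition \ref{PropMovPlane} to rule out boundary concentration. Since the right-hand side $f_\varepsilon(s) := p_\varepsilon \lambda_\varepsilon s^{p_\varepsilon-1} e^{s^{p_\varepsilon}}$ is a nonnegative, nondecreasing $C^1$ function of $s \geq 0$ (using $p_\varepsilon \geq 1$), Proposition \ref{PropMovPlane} applies uniformly in $\varepsilon$ and forces every critical point of $u_\varepsilon$, in particular the sequence of maxima $x_\varepsilon$ with $M_\varepsilon := u_\varepsilon(x_\varepsilon) \to +\infty$, to lie at distance at least $\delta'>0$ from $\partial\Omega$. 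The same conclusion will apply to any subsequent concentration point produced by iterating the bubble extraction.

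Next, I would perform the standard blow-up analysis at $x_\varepsilon$: define the natural scale $r_\varepsilon \to 0$ by $r_\varepsilon^{-2} \sim p_\varepsilon^{2} \lambda_\varepsilon M_\varepsilon^{2(p_\varepsilon-1)} e^{M_\varepsilon^{p_\varepsilon}}$ and the rescaled functions $v_\varepsilon(y) := p_\varepsilon M_\varepsilon^{p_\varepsilon-1}\left(u_\varepsilon(x_\varepsilon + r_\varepsilon y) - M_\varepsilon\right)$. Taylor-expanding $s^{p_\varepsilon}$ around $M_\varepsilon$, the rescaled equation reads $\Delta v_\varepsilon = 2 e^{v_\varepsilon + o(1)}$ on $B_{\delta'/r_\varepsilon}(0)$; by elliptic regularity together with the Chen--Li classification of entire finite-mass solutions of the Liouville equation $\Delta v = 2 e^v$ on $\R^2$, we obtain, up to a subsequence, $v_\varepsilon \to v_0$ locally uniformly, with $v_0$ a standard bubble contributing exactly $4\pi$ to $\beta_\varepsilon$ on balls of radius $Rr_\varepsilon$ (first as $\varepsilon\to 0$, then as $R\to \infty$). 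Since $M_\varepsilon \to \infty$ and $\beta_\varepsilon$ is bounded, the defining relation for $r_\varepsilon$ immediately yields $\lambda_\varepsilon \to 0^+$.

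To get the full quantization $\beta = 4\pi N$ with $N \in \N^\star$, I would run the bubble-tree selection process developed in \cite{DMMT, DruetDuke, MartMan}: either the residual sequence, after subtracting off the first bubble, stays bounded in $L^\infty_{\mathrm{loc}}$ on $\Omega$ minus the finite set of concentration points, in which case $\lambda_\varepsilon \to 0$ together with the explicit form of $\beta_\varepsilon$ shows that the contribution away from the bubbles vanishes in the limit; or a new concentration point appears, producing a further bubble of mass $4\pi$. The procedure terminates after finitely many steps because $\beta_\varepsilon$ is uniformly bounded and each bubble carries at least $4\pi$.

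The main obstacle is faithfully transplanting the closed-surface machinery of \cite[Theorem 4.1]{DMMT} to our bounded planar setting, and doing it uniformly in $p_\varepsilon \in [1,2]$ so as to simultaneously cover the Mean-Field regime ($p_\varepsilon = 1$) and the Moser--Trudinger regime ($p_\varepsilon = 2$), where the definition of $\beta_\varepsilon$ interpolates nontrivially between the two. The boundary issue, however, is entirely absorbed by Proposition \ref{PropMovPlane}: once concentration near $\partial\Omega$ is forbidden, all subsequent analysis is local and parallels the closed-surface case essentially verbatim.
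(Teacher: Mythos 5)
Your overall route is the same as the paper's: use Proposition \ref{PropMovPlane} to force all concentration (critical) points a definite distance $\delta'$ from $\partial\Omega$, and then transplant the closed-surface analysis of \cite[Sections 2--4]{DMMT} essentially verbatim, the Dirichlet condition only simplifying matters. However, two steps that you present as routine are precisely the delicate points, and as written they are gaps. First, $\lambda_\varepsilon\to 0^+$ does \emph{not} follow ``immediately'' from the relation $r_\varepsilon^{-2}\sim p_\varepsilon^2\lambda_\varepsilon M_\varepsilon^{2(p_\varepsilon-1)}e^{M_\varepsilon^{p_\varepsilon}}$ together with $M_\varepsilon\to\infty$ and $\beta_\varepsilon=O(1)$: that relation merely defines a small scale $r_\varepsilon$ and is perfectly compatible with $\lambda_\varepsilon$ bounded away from $0$ (note \eqref{ControlLambda1} only gives an upper bound $2\lambda_\varepsilon\le\lambda_1$). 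In the paper this is a genuine step, carried out as in \cite[Step 4.2]{DMMT}, and it is exactly where the lower bound $d(x_{i,\varepsilon},\partial\Omega)\ge\delta'$ from Propositions \ref{PropMovPlane}--\ref{PropWeakPwEst} is used. Since your argument that ``the contribution away from the bubbles vanishes'' relies on $\lambda_\varepsilon\to 0$, this gap propagates into your quantization step.

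Second, describing the exact quantization $\beta=4\pi N$ as a ``standard bubble-tree selection'' in which each bubble carries at least $4\pi$ and the residual is discarded misrepresents where the difficulty lies: a soft selection argument only yields lower bounds and leaves open extra energy in the neck regions and in the weak limit $u_0$ (recall that for Palais--Smale sequences quantization genuinely \emph{fails}, as the paper emphasizes, so no ``standard'' concentration-compactness bookkeeping can close this step). The proof in the paper (following \cite{DMMT}, and before that \cite{DruetDuke} for $p=2$) requires the refined radially symmetric ansatz of \cite[Sections 2--3]{DMMT}, with quantified errors in the non-radial setting, and the sharp pointwise and gradient estimates of \cite[Section 4]{DMMT} (starting from Proposition \ref{PropWeakPwEst}) to exclude neck energy and residual contributions, uniformly in $p_\varepsilon\in[1,2]$. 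Also note that for the Dirichlet energy the quantum $4\pi$ is not captured on balls $B_{Rr_\varepsilon}$ (there the gradient energy is $o(1)$; the mass accumulates logarithmically over the neck), so even the accounting of ``exactly $4\pi$ per bubble'' needs the finer analysis, especially given the $p_\varepsilon$-dependent product structure of $\beta_\varepsilon$. Your reduction of the boundary issue to Proposition \ref{PropMovPlane} is correct and matches the paper; the interior analysis, however, cannot be waved through as standard.
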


\noindent Independently, multiplying the equation in \eqref{IntermEq} by a first Dirichlet eigenfunction $v_1$ of the Laplacian, integrating by parts and using $pt^{p-1}e^{t^p}\ge 2t$ for all $t\ge 0$ and $p\in [1,2]$, we get 
\begin{equation}\label{ControlLambda1}
0<2\lambda\le \lambda_1  
\end{equation}
if a (nontrivial) solution $u$ of \eqref{IntermEq} exists. 

\begin{proof}[Outline of the proof of Theorem \ref{ThmBUp}]
Overall, the proof follows closely \cite[Sections 2-4]{DMMT}. It is even simpler in the present framework where Dirichlet boundary conditions are imposed, since we no longer need to handle the additional coercivity term $h u_\varepsilon$ in the left-hand $\Delta_g u_\varepsilon+h u_\varepsilon$ of the equation in \cite[equation (4.1)]{DMMT} analogue to \eqref{EqBup} here: indeed, all along the argument of \cite{DMMT}, this additional term is just estimated by error terms which can be taken now to be zero. 

\medskip

More precisely, as pioneered by Druet \cite[Sections 3-4]{DruetDuke} when $p_\varepsilon=2$ for all $\varepsilon$ (see also \cite[Section 3]{DruThiI}), we start with the following proposition giving a first pointwise exhaustion of the concentration points, as well as weak gradient estimates under the assumptions of Theorem \ref{ThmBUp}:

\begin{prop}\label{PropWeakPwEst}
 Up to a subsequence, there exist an integer $n\in \mathbb{N}^\star$ and sequences $(x_{i,\varepsilon})_{\varepsilon}$ of points in $\Omega$ such that $\nabla u_\varepsilon(x_{i,\varepsilon})=0$, such that, setting $\gamma_{i,\varepsilon}:=u_\varepsilon(x_{i,\varepsilon})$,
\begin{equation}\label{MuToZero}
\mu_{i,\varepsilon}:=\left(\frac{8}{\lambda_\varepsilon p_\varepsilon^2 \gamma_{i,\varepsilon}^{2(p_\varepsilon-1)}  e^{\gamma_{i,\varepsilon}^{p_\varepsilon}}}\right)^{\frac{1}{2}} \to 0\,,
\end{equation} 
and such that
\begin{equation}\label{Bubbling}
\frac{p_\varepsilon}{2}\gamma_{i,\varepsilon}^{p_\varepsilon-1}(\gamma_{i,\varepsilon}-u_\varepsilon(x_{i,\varepsilon}+\mu_{i,\varepsilon}\cdot))\to T_0:=\ln\left(1+|\cdot|^2 \right)\text{ in }C^1_{loc}(\mathbb{R}^2)\,,
\end{equation}
as $\varepsilon\to 0$, for all $i\in\{1,...,n\}$. Moreover, there exist $C>0$ such that we have
\begin{equation*}
\left(\min_{i\in\{1,...,n\}}|x_{i,\varepsilon}-\cdot|\right) |\nabla u_\varepsilon|\, u_\varepsilon^{p_\varepsilon-1}  \le C\text{ in }\Omega
\end{equation*}
for all $\varepsilon$. We also have that $\lim_{\varepsilon\to 0}x_{i,\varepsilon}=x_i$ for all $i$, and that there exists $u_0\in C^2(\bar{\Omega}\backslash \mathcal{S})$ such that 
\begin{equation*}
\lim_{\varepsilon\to 0}u_\varepsilon=u_0\text{ in }C^1_{loc}(\bar{\Omega}\backslash \mathcal{S})\,,
\end{equation*}
where $\mathcal{S}\subset \Omega$ consists of all the $x_i$'s.
\end{prop}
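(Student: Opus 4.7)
The plan is a Druet-style exhaustion argument \cite{DruetDuke,DruThiI}, adapted to the Dirichlet setting here by using Proposition \ref{PropMovPlane} to prevent boundary concentration. Since the nonlinearity $t\mapsto p_\varepsilon\lambda_\varepsilon t^{p_\varepsilon-1}e^{t^{p_\varepsilon}}$ is nondecreasing on $[0,\infty)$, that proposition guarantees $\dist(x,\partial\Omega)\ge\delta'$ for every critical point $x$ of any $u_\varepsilon$, so every concentration point extracted along the way is automatically interior and the boundary condition drops out of the blow-up analysis.

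\textbf{Bubble extraction.} For the first point, take $x_{1,\varepsilon}$ realizing $\max u_\varepsilon$, so that $\gamma_{1,\varepsilon}:=u_\varepsilon(x_{1,\varepsilon})\to+\infty$, and let $\mu_{1,\varepsilon}$ be defined by \eqref{MuToZero}; the energy bound $\beta_\varepsilon\le C$ forces $\mu_{1,\varepsilon}\to 0$. Setting
$$v_\varepsilon(y):=\frac{p_\varepsilon}{2}\gamma_{1,\varepsilon}^{p_\varepsilon-1}\bigl(\gamma_{1,\varepsilon}-u_\varepsilon(x_{1,\varepsilon}+\mu_{1,\varepsilon}y)\bigr),$$
a Taylor expansion of the nonlinearity around $\gamma_{1,\varepsilon}$ shows that $\Delta_y v_\varepsilon+4e^{-2v_\varepsilon}\to 0$ locally, while $v_\varepsilon(0)=0=\min v_\varepsilon$. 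Standard elliptic regularity together with the classical Chen-Li classification of finite-mass solutions of the limiting Liouville equation on $\R^2$ gives $v_\varepsilon\to T_0$ in $C^1_{loc}(\R^2)$. Inductively, if $x_{1,\varepsilon},\dots,x_{k,\varepsilon}$ have already been produced and
$$w_{k,\varepsilon}(x):=\bigl(\min_{i\le k}|x-x_{i,\varepsilon}|\bigr)|\n u_\varepsilon(x)|\,u_\varepsilon(x)^{p_\varepsilon-1}$$
is uniformly bounded, the process stops with $n=k$. Otherwise one picks $y_\varepsilon$ (essentially) realizing $\sup w_{k,\varepsilon}$, uses $w_{k,\varepsilon}(y_\varepsilon)\to\infty$ to force $u_\varepsilon(y_\varepsilon)\to\infty$, and then locates a nearby critical point $x_{k+1,\varepsilon}$ of $u_\varepsilon$ from which another bubble develops. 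The maximality of $w_{k,\varepsilon}$ at $y_\varepsilon$ forces $|y_\varepsilon-x_{i,\varepsilon}|/\mu_{k+1,\varepsilon}\to\infty$ for each $i\le k$, so the bubbles are genuinely disjoint and each contributes a definite amount of energy to $\beta$; the uniform bound on $\beta_\varepsilon$ therefore terminates the procedure after finitely many steps.

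\textbf{Convergence outside $\mathcal{S}$.} Once the weak pointwise gradient estimate holds and every $x_i$ lies at distance $\ge\delta'$ from $\partial\Omega$, the full nonlinearity $\lambda_\varepsilon p_\varepsilon u_\varepsilon^{p_\varepsilon-1}e^{u_\varepsilon^{p_\varepsilon}}$ is locally bounded on $\bar\Omega\setminus\mathcal{S}$: the gradient estimate together with the Dirichlet condition provides an $L^\infty$-bound away from $\mathcal{S}$, and standard elliptic regularity then yields $C^1_{loc}(\bar\Omega\setminus\mathcal{S})$-convergence of $u_\varepsilon$ to some $u_0\in C^2(\bar\Omega\setminus\mathcal{S})$.

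\textbf{Main obstacle.} The technical heart of the argument is the inductive step: after extracting $x_{k+1,\varepsilon}$ from $y_\varepsilon$, one must verify that the \emph{correct} scale is $\mu_{k+1,\varepsilon}$ given by \eqref{MuToZero} (rather than the gradient-based scale naturally suggested by $y_\varepsilon$), and that the rescaled limit is again precisely $T_0$ and not a degenerate profile. This requires a careful Taylor expansion of $t^{p_\varepsilon-1}e^{t^{p_\varepsilon}}$ at $t=\gamma_{k+1,\varepsilon}$, with $p_\varepsilon\in[1,2]$, tracking several lower-order terms that vanish only in the simultaneous limits $\gamma_{k+1,\varepsilon}\to\infty$ and $\mu_{k+1,\varepsilon}\to 0$. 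This computation is exactly the one carried out in \cite[Sections 2--4]{DMMT}; the only simplification offered by the Dirichlet setting is that the coercivity correction $hu_\varepsilon$ present on a closed surface disappears, making all the corresponding error terms of \cite{DMMT} trivially zero.
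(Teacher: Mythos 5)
Your proposal follows essentially the same route as the paper: a Druet-type exhaustion of concentration points (as in \cite[Sections 3--4]{DruetDuke} and \cite[Proposition 4.1]{DMMT}), with Proposition \ref{PropMovPlane} ensuring that the concentration points, being critical points of $u_\varepsilon$, stay a fixed distance from $\partial\Omega$, and with the core technical work deferred to \cite[Sections 2--4]{DMMT} exactly as the paper does. The only slip is attributing $\mu_{1,\varepsilon}\to 0$ to the energy bound: it really follows from the pointwise blow-up, since if $\lambda_\varepsilon p_\varepsilon^2\gamma_{1,\varepsilon}^{2(p_\varepsilon-1)}e^{\gamma_{1,\varepsilon}^{p_\varepsilon}}$ stayed bounded, then by monotonicity of the nonlinearity $\Delta u_\varepsilon$ would be uniformly bounded, and the maximum principle with the zero Dirichlet data would bound $u_\varepsilon$, contradicting $\max_\Omega u_\varepsilon\to+\infty$.
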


About the concentration points $x_{i,\varepsilon}$ of this proposition, being critical points of the $u_\varepsilon$'s by construction, \eqref{CritFarBdry} yields that they \emph{cannot collapse to the boundary} so that proving Proposition \ref{PropWeakPwEst} does not require new arguments with respect to the ones to get \cite[Proposition 4.1]{DMMT}. In addition, \eqref{ControlLambda1} and \eqref{MuToZero} yield that $\gamma_{i,\varepsilon}\to +\infty$, so that \eqref{Bubbling} really points out a concentration profile at first order for the $u_\varepsilon$'s.

But at that stage, as in \cite[Section 2]{DMMT}, the strategy is to consider a radially symmetric much more precise ansatz solving the same equation (i.e. set now $h_0\equiv 0$ in \cite[(2.6)]{DMMT}), whose relevance in the non-radial setting can be quantified (see \cite[Section 3]{DMMT} picking now $h\equiv 0$ in \cite[(3.8)]{DMMT}). Thus assumption \cite[(2.5)]{DMMT} and the related \cite[(3.6), (3.17)-(3.19),  second requirement in (3.21), (4.26), Lemma 4.1]{DMMT} can be now ignored. The terms $w_{l,\varepsilon}$ in \cite[(4.47)]{DMMT} and $\tilde{w}_\varepsilon$ in \cite[(4.85)]{DMMT} used to control this additional linear term are also now useless and we eventually do get Theorem \ref{ThmBUp} by following the lines of \cite[Sections 2-4]{DMMT}, since no specific new difficulty may arise close to the boundary thanks to Propositions \ref{PropMovPlane} and \ref{PropWeakPwEst}. As a last important remark, the lower bound on $d(x_{i,\varepsilon},\partial\Omega)$ from Proposition \ref{PropWeakPwEst} is also used to get the key property $\lambda_\varepsilon\to 0^+$, arguing as in \cite[Step 4.2]{DMMT}.
\end{proof}

\begin{rem} As pointed out in \cite{MartThiVet}, the positivity of the $u_\ve$'s is essential to have energy quantization (i.e. $\beta\in 4\pi\N^\star$) in Theorem \ref{ThmBUp}. Moreover, even restricting to nonnegative functions, we stress that there exist Palais-Smale sequences associated to \eqref{MainEq} with arbitrary limiting energy $\beta>4\pi$ and converging weakly to $0$ in $H^1_0$ (see Costa-Tintarev \cite{CostaTintarev}), so that not only the quantization, but even the quantification proved for solutions by Druet \cite{DruetDuke} fail for general Palais-Smale sequences (see also \cite{ThiMan2} for examples with non-zero weak limit entering in the framework of \cite{DruetDuke}, but where \emph{quantization fails}). This is in striking constrast with a large class of otherwise closely related critical problems for which quantification already holds for Palais-Smale sequences (see for instance the pioneering work \cite{Struwe} by Struwe). This is maybe the clearest evidence of the huge difficulty to run directly standard variational techniques generating Palais-Smale sequences to solve \eqref{MainEq}, and motivates our compactness techniques considering only \emph{exact solutions} instead of \emph{Palais-Smale sequences}. 
\end{rem}

In the specific case of the mean-field equation, i.e. when $p_\varepsilon=1$ for all $\varepsilon$, with the same notation and assumptions as in Theorem \ref{ThmBUp}, we obviously get 
\begin{equation}\label{ModifiedBetaMeanField}
\beta_\varepsilon^{MF}:=\frac{\lambda_\varepsilon}{2}\int_\Omega e^{u} dx=\beta_\varepsilon+\frac{\lambda_\varepsilon|\Omega|}{2}\to\beta=4\pi N\in 4\pi \mathbb{N}^\star
\end{equation}
as $\varepsilon\to 0$. At that stage, another additional ingredient for our proof is the following result stated here with the same notation and conventions as in Theorem \ref{ThmDegree}:

\begin{thm}\label{ThmChenLin}
Let $\Omega\subset \mathbb{R}^2$ be a smoothly bounded domain and $h\in C^1(\bar\Omega)$ be positive. Let $N\in \mathbb{N}$ and $\beta\in (4\pi N,4\pi(N+1))$ be given. Then the total Leray-Schauder degree $d^{MF}_{\beta,h}(\Omega)$ of all the solutions of the mean-field equation
\begin{equation}\label{MeanFieldEquationh}\tag{$\mathcal{E}_{\beta,h}^{MF}$}
\begin{cases}
& \Delta u=\beta\frac{2  h e^u}{\int_\Omega h e^u dx}\,,\\
& u=0 \text{ on }\partial\Omega\,,
\end{cases}
\end{equation} 
is well-defined and equals $\binom{N-\chi(\Omega)}{N}$.
\end{thm}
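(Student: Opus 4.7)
The plan is to compute $d^{MF}_{\beta,h}(\Omega)$ in four stages: (i) show that it is well-defined and locally constant on the complement of $4\pi\mathbb{N}^\star$; (ii) fix the base case $d^{MF}_{\beta,h}(\Omega) = 1$ on $(0,4\pi)$; (iii) compute the jump across each threshold $\beta = 4\pi N$ via the Chen--Lin finite-dimensional reduction (Proposition \ref{p:degreejump}) and the Brouwer-degree formula of Proposition \ref{p:degree}; (iv) sum the jumps by a Pascal-type identity to recover $\binom{N-\chi(\Omega)}{N}$.

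For (i), I would rewrite $(\mathcal{E}_{\beta,h}^{MF})$ as the fixed-point equation
\begin{equation*}
u = T_{\beta,h}(u) := 2\beta\, \Delta^{-1}\Bigl(\frac{h e^u}{\int_\Omega h e^u\,dx}\Bigr)
\end{equation*}
on $H^1_0$, with $T_{\beta,h}$ compact by Moser--Trudinger embeddings. Specializing Theorem \ref{ThmBUp} to $p_\ve \equiv 1$ (the positive smooth weight $h$ induces only cosmetic modifications to the blow-up analysis, since the concentration profile and energy quantization are local phenomena at the blow-up points) yields uniform $C^1$-bounds on the solution set whenever $\beta$ stays in a compact subset of $(4\pi N, 4\pi(N+1))$: otherwise quantization would force $\beta \in 4\pi \mathbb{N}^\star$. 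The Leray--Schauder degree is then well-defined, and its constancy on each subinterval follows from homotopy invariance. For (ii), since the degree is constant on $(0,4\pi)$, it suffices to evaluate it for $\beta$ very small; a contraction argument then yields a unique fixed point $u_\beta \to 0$ in $H^1_0$, and the linearization $\Id - T'_{\beta,h}(u_\beta)$ is a small perturbation of $\Id$, so its local degree is $+1$. Hence $d^{MF}_{\beta,h}(\Omega) = 1$ on $(0,4\pi)$, matching $\binom{-\chi(\Omega)}{0}=1$.

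For (iii), Proposition \ref{p:degreejump} identifies the jump
\begin{equation*}
d^{MF}_{(4\pi N)^+,h}(\Omega) - d^{MF}_{(4\pi N)^-,h}(\Omega)
\end{equation*}
with the Brouwer degree of an explicit vector field $\Psi_N$ on a neighborhood in the open configuration space $(\Omega^N \setminus \mathrm{Diag})/S_N$, built from the Dirichlet Green's function twisted by $h$; Proposition \ref{p:degree} evaluates this Brouwer degree to $\binom{N-1-\chi(\Omega)}{N}$. Summing yields
\begin{equation*}
d^{MF}_{\beta,h}(\Omega) = 1 + \sum_{k=1}^{N} \binom{k-1-\chi(\Omega)}{k} = \binom{N-\chi(\Omega)}{N},
\end{equation*}
the second equality by iterating Pascal's rule $\binom{m}{k} = \binom{m-1}{k-1} + \binom{m-1}{k}$ starting from $\binom{-\chi(\Omega)}{0}=1$. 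The technically most demanding step will be Proposition \ref{p:degree}: the vector field $\Psi_N$ lives on a non-compact open subset of $\Omega^N$ whose relative boundary meets both the diagonal and $\partial(\Omega^N)$, and $\Psi_N$ need not point outward on the latter, so the classical Poincar\'e--Hopf theorem does not apply directly. The strategy will be a blow-up analysis at the diagonal and at $\partial\Omega$ to rule out critical points on them and identify local indices, combined with an extended Poincar\'e--Hopf identity that accommodates inward-pointing boundary portions, producing $\binom{N-1-\chi(\Omega)}{N}$ as the right Euler-characteristic count.
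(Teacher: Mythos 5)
Your proposal is correct and follows essentially the paper's own route: Theorem \ref{ThmChenLin} is obtained precisely by combining the Chen--Lin reduction (Proposition \ref{p:degreejump}, which already contains your steps (i)--(ii), i.e.\ well-definedness, constancy of $d^{MF}_{\beta,h}(\Omega)$ on each interval $(4\pi N,4\pi(N+1))$ and the value $1$ for $N=0$) with the Brouwer-degree formula of Proposition \ref{p:degree}, and then summing the jumps $(-1)^k\binom{\chi(\Omega)}{k}=\binom{k-1-\chi(\Omega)}{k}$ via Pascal's rule, exactly as you do. The only loose point is the claim that a non-constant weight $h$ is a ``cosmetic'' modification of Theorem \ref{ThmBUp} (whose exclusion of boundary blow-up rests on Proposition \ref{PropMovPlane}, hence on the autonomous form of the nonlinearity), but this is immaterial here since the compactness for general $h$ is already part of the quoted Chen--Lin statement.
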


As an obvious remark, a solution of \eqref{MeanFieldEquationh} has to be positive in $\Omega$. Theorem \ref{ThmChenLin}, which is only needed here for a constant function $h>0$, has been stated first in Chen-Lin \cite{ChenLin-Liouville}. Indeed, these authors show in their beautiful work:

\begin{prop}[Chen-Lin \cite{ChenLin-Liouville}]\label{p:degreejump} For all $N\in\N$ and all $\beta\in (4\pi N, 4\pi (N+1))$, $d^{MF}_{\beta,h}(\Omega)$ is equal to a constant $d_N^{MF}(\Omega)$, which is $1$ for $N=0$. Moreover, we have the following degree jump formula:
\begin{equation}\label{degreejump}
d_{N+1}^{MF}(\Omega)-d_{N}^{MF}(\Omega)= \frac{(-1)^N}{N!}\deg(\nabla\Phi_{N,h},\Omega^N\setminus D_N,0),
\end{equation}
where
\begin{equation}\label{defPhi}
  \Phi_{N,h}(x_1,\dots,x_N) = 8 \pi \sum_{ i \neq j} \mathcal{G}(x_i,x_j) + 4\pi \sum_{1\le i\le N}  
  \mathcal{H}(x_i,x_i) +\sum_{1\le i\le N} \ln h(x_i)
\end{equation}
is defined in $\Omega^N\setminus D_N$ with $D_N:=\{(x_1,\dots,x_N)\in \Omega^N: \exists i\neq j\,, ~x_i=x_j\}$, where $\mathcal{G}$ is the Green's function of $\Delta$ with zero Dirichlet boundary conditions on $\Omega$, where $\mathcal{H}$ is the regular part of $\mathcal{G}$ normalized here as
\begin{equation}\label{ConventionRegPartGreen}
  \mathcal{G}(x,y) = \frac{1}{2 \pi}  \ln \frac{1}{|x-y|} + \mathcal{H}(x,y), 
\end{equation}
and where $\deg(\nabla\Phi_{N,h},\Omega^N\setminus D_N,0)$ is the Brouwer degree (or total index) of the vector field $\nabla \Phi_{N,h}$.
\end{prop}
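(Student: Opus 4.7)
\medskip
The plan is to implement Chen-Lin's strategy in three stages: use the compactness theorem to show that $d^{MF}_{\beta,h}(\Omega)$ is constant on each interval $(4\pi N, 4\pi(N+1))$, compute the base case $N=0$ directly, and reduce the degree jump at $\beta_\star=4\pi(N+1)$ to a Brouwer degree via a Lyapunov-Schmidt argument.

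For the first stage I would apply Theorem \ref{ThmBUp} with $p_\varepsilon\equiv 1$. Combined with the identity $\beta_\varepsilon^{MF}=\beta_\varepsilon+\lambda_\varepsilon|\Omega|/2$ in \eqref{ModifiedBetaMeanField} and the key conclusion $\lambda_\varepsilon\to 0^+$ along any blowing-up sequence, this shows that every sequence $(u_\varepsilon)$ of solutions of \eqref{MeanFieldEquationh} with $\beta_\varepsilon^{MF}\to\beta\notin 4\pi\mathbb{N}^\star$ stays uniformly bounded in $L^\infty(\Omega)$, and hence in $C^{1,\alpha}_0(\bar\Omega)$ by elliptic regularity. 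Recasting \eqref{MeanFieldEquationh} as a compact fixed-point equation $u=T_\beta(u)$, with $T_\beta(u):=2\beta\,\Delta^{-1}\bigl(he^u/\int_\Omega he^u\bigr)$ on $C^{1,\alpha}_0(\bar\Omega)$, the degree $d^{MF}_{\beta,h}(\Omega):=\deg(I-T_\beta,B_R,0)$ is then well-defined for any $R$ containing the whole solution set, and it is constant on each interval $(4\pi N,4\pi(N+1))$ by homotopy invariance in $\beta$. For the base case, I would deform $\beta$ to $0^+$: the map $T_\beta$ becomes a small contraction near the origin, $u\equiv 0$ is the only fixed point, and its linearization is invertible, giving $d_0^{MF}(\Omega)=1$.

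The degree jump is the most delicate step and is the main obstacle. Fix $\beta_\star=4\pi(N+1)$ and pick $\beta_\pm$ close to $\beta_\star$ from either side. By the compactness analysis above, $d^{MF}_{N+1}(\Omega)-d_N^{MF}(\Omega)$ records the signed count of solution branches that cross $\beta_\star$, all of which must blow up as $\beta\to\beta_\star$. Thanks to Theorem \ref{ThmBUp} and Proposition \ref{PropMovPlane} (which rules out boundary concentration), any such blowing-up family concentrates, up to a subsequence, at a configuration of distinct interior points with a standard Liouville profile at each one. I would then perform a Lyapunov-Schmidt reduction: build an ansatz as a sum of Liouville bubbles centered at a tuple $(x_1,\dots,x_N)\in\Omega^N\setminus D_N$ with concentration parameters $\mu_i$, add correctors to enforce the Dirichlet condition, and solve the equation orthogonally to the bubble kernels (generated by dilation and translation modes) in suitable weighted H\"older spaces. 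This reduces the problem to finding critical points of a finite-dimensional functional $\mathcal{F}(x,\mu)$ on $(\Omega^N\setminus D_N)\times(\text{concentration space})$.

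Expanding $\mathcal{F}$ in $C^1$-norm in the bubble locations, its leading $x$-dependence turns out to be $\Phi_{N,h}(x)$ (up to an additive constant) with remainders vanishing uniformly on compact subsets of $\Omega^N\setminus D_N$ as concentration tends to infinity. Each non-degenerate zero of $\nabla\Phi_{N,h}$ then generates an isolated blowing-up family whose Leray-Schauder local index equals the Brouwer index of the zero times a universal factor $(-1)^N$, coming from the $N$ unstable dilation modes present in the full linearization; the division by $N!$ accounts for the $N!$ orderings of the tuple $(x_1,\dots,x_N)$ that correspond to the same physical solution. Assembling these contributions yields \eqref{degreejump}. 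The genuine technical difficulty lies in the uniform invertibility of the linearized operator on the orthogonal complement of its nearly degenerate kernel as concentration tends to infinity, in pushing the expansion of $\mathcal{F}$ to $C^1$-accuracy in $x$, and in keeping all bubbles uniformly away from $\partial\Omega$ throughout the construction, the latter being guaranteed exactly by Proposition \ref{PropMovPlane}.
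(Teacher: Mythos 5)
The paper itself offers no proof of this proposition: it is quoted from Chen--Lin \cite{ChenLin-Liouville}, so there is no internal argument to compare yours with. Your roadmap is indeed Chen--Lin's strategy as the paper describes it (compactness of the solution set for $\beta\notin 4\pi\mathbb{N}^\star$, so that the degree is well defined and constant on each interval by homotopy invariance; $d_0^{MF}(\Omega)=1$ by deforming $\beta\to 0^+$ to a uniquely solvable, nondegenerate problem; a Lyapunov--Schmidt reduction to express the jump as a finite-dimensional Brouwer degree), and the first two stages of your outline are essentially correct.

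The jump formula, however, is where the real content lies, and there your proposal has concrete gaps. First, a counting inconsistency: by the quantization you invoke (Theorem \ref{ThmBUp}, each concentration point carrying $4\pi$), a family with $\beta_\varepsilon\to 4\pi(N+1)$ concentrates at $N+1$ distinct interior points, yet your ansatz uses $N$ bubbles parametrized by $(x_1,\dots,x_N)\in\Omega^N\setminus D_N$; the $N$-point Hamiltonian $\Phi_{N,h}$ is the one governing the crossing of $4\pi N$ (this is also what makes the formula compatible, via Pascal's rule, with Proposition \ref{p:degree} and the final value $\binom{N-\chi(\Omega)}{N}$), so the reduction must be set up with the number of bubbles equal to the number of quanta at the crossing level. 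Second, you compute the jump by summing local indices over \emph{non-degenerate} zeros of $\nabla\Phi_{N,h}$; nothing guarantees $\Phi_{N,h}$ is Morse, and Chen--Lin's proof precisely avoids any nondegeneracy assumption: their sharp pointwise estimates show that \emph{every} blowing-up family near the critical level is described by the reduced finite-dimensional map, and that the total Leray--Schauder index of these solutions equals $\frac{(-1)^N}{N!}$ times the Brouwer degree of $\nabla\Phi_{N,h}$, the sign coming from a spectral analysis of the linearized operator rather than from a heuristic count of unstable modes. Third, for non-constant $h$ the nonlinearity is non-autonomous, so Proposition \ref{PropMovPlane} does not apply to \eqref{MeanFieldEquationh} and the exclusion of boundary concentration needs a separate argument. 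These items (completeness of the reduction, uniform invertibility, the index computation without nondegeneracy, boundary control) are exactly the hard part of Chen--Lin's work and are only named, not carried out, in your proposal, so as it stands it is an outline rather than a proof.
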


Contrary to what occurs on a closed manifold, it turns out that, in order to compute the degree of $\nabla \Phi_{N,h}$, the classical Poincaré-Hopf formula does \emph{not} directly apply in the present setting of a bounded domain since $\nabla \Phi_{N,h}$ points outwards only on a strict subset of the boundary of its domain, non-empty if $N>1$. Another goal of this paper is to provide an argument showing that, for topological reasons relying strongly on the fact that every connected component of $\partial \Omega$ has zero Euler characteristic in our two-dimensional case (crucial point in Lemma \ref{l:deg-in-Theta-I}, see also Remark \ref{Odd} just below), the following formula for the degree of $\nabla\Phi_{N,h}$ holds in the present setting of a smoothly bounded domain $\Omega$, so that the degree formula in Theorem \ref{ThmChenLin} follows from Propositions \ref{p:degreejump} and \ref{p:degree}.

\begin{rem}\label{Odd}
It is known that any closed odd-dimensional manifold has zero Euler-characteristic, so that this aforementioned property for every connected boundary component of $\partial \Omega$ holds more generally for any smoothly bounded $\Omega \subset \mathbb{R}^n$ with $n$ \emph{even}. Indeed, with minimal changes, our proof extends to this case which is relevant for instance for the fourth-order mean-field equation in Lin-Wei-Wang \cite{LinWeiWang11}.
\end{rem}

\begin{prop}\label{p:degree}
The degree of $\nabla\Phi_{N,h}$ is well-defined on $ \Omega^N \setminus D_N$ and one has 
\begin{equation}\label{eq:degree-formula}
 \deg(\nabla \Phi_{N,h},\Omega^N\setminus D_N,0 ) = \chi (\Omega) (\chi (\Omega) -1) ... (\chi (\Omega) -N + 1). 
\end{equation}
\end{prop}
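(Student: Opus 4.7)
The plan is to realize the degree via a generalized Poincar\'e--Hopf formula on a compactified region and then compute the relevant Euler characteristics, exploiting the fact that $\chi(\partial\Omega)=0$ in dimension two.

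First, I would analyze the boundary behavior of $\nabla\Phi_{N,h}$ on $\Omega^N\setminus D_N$. Near the diagonal $D_N$, the logarithmic singularity of $8\pi\mathcal{G}(x_i,x_j)$ dominates, yielding $\nabla_{x_i}\Phi_{N,h}\sim -8(x_i-x_j)/|x_i-x_j|^2$ as $x_i\to x_j$, so $\nabla\Phi_{N,h}$ blows up and pulls the configuration \emph{towards} $D_N$ (consistent with $\Phi_{N,h}\to+\infty$ there). Near $\partial\Omega^N$, the Robin function $\mathcal{H}(x_i,x_i)\to-\infty$ as $x_i\to\partial\Omega$ forces $\nabla\Phi_{N,h}$ to have a large \emph{inward}-pointing component in the $x_i$-direction. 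In particular, $\nabla\Phi_{N,h}$ has no zero in an open neighborhood of $D_N\cup\partial\Omega^N$, so the degree in \eqref{eq:degree-formula} is well-defined. Excising a small tubular neighborhood $U_\varepsilon$ of $D_N$, the compact manifold-with-corners $R_\varepsilon:=\bar\Omega^N\setminus U_\varepsilon$ contains all the zeros of $\nabla\Phi_{N,h}$, which is moreover transverse to $\partial R_\varepsilon$: \emph{outward} on $\partial U_\varepsilon\cap R_\varepsilon$ (towards $D_N$) and \emph{inward} on $\partial\Omega^N\cap R_\varepsilon$.

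Because of this \emph{mixed} boundary behavior, the standard Poincar\'e--Hopf theorem does not immediately give the degree. Instead, I would invoke its generalization to manifolds with boundary carrying a transverse vector field (Morse's index formula): after smoothing the corners of $R_\varepsilon$ by an arbitrarily small perturbation (preserving both Euler characteristics and transversality), and observing that $\dim R_\varepsilon = 2N$ is even so that no global sign adjustment is needed,
\begin{equation*}
\deg(\nabla\Phi_{N,h},\Omega^N\setminus D_N,0)=\sum_{\nabla\Phi_{N,h}(p)=0}\mathrm{ind}_p(\nabla\Phi_{N,h})=\chi(R_\varepsilon)-\chi(\partial_-R_\varepsilon),
\end{equation*}
where $\partial_-R_\varepsilon:=\partial\Omega^N\cap R_\varepsilon$ is the \emph{inflow} portion of $\partial R_\varepsilon$. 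Since $R_\varepsilon$ deformation retracts onto the open configuration space $F_N(\Omega)=\Omega^N\setminus D_N$, the Fadell--Neuwirth fibration $F_N(\Omega)\to F_{N-1}(\Omega)$ with fiber $\Omega\setminus\{N-1 \text{ interior points}\}$, combined with $\chi(\Omega\setminus\{k \text{ interior pts}\})=\chi(\Omega)-k$, gives by an immediate induction
\begin{equation*}
\chi(R_\varepsilon)=\chi(F_N(\Omega))=\chi(\Omega)(\chi(\Omega)-1)\cdots(\chi(\Omega)-N+1).
\end{equation*}

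It then remains to prove $\chi(\partial_-R_\varepsilon)=0$. For this I would use inclusion--exclusion over the subsets $\emptyset\neq S\subseteq\{1,\dots,N\}$ of indices $i$ for which $x_i\in\partial\Omega$:
\begin{equation*}
\chi(\partial_-R_\varepsilon)=\sum_{\emptyset\neq S}(-1)^{|S|-1}\chi(V_S),
\end{equation*}
where the stratum $V_S=\{(x_1,\dots,x_N)\in\bar\Omega^N : x_i\in\partial\Omega \text{ for } i\in S, \text{ all distinct}\}$ admits a forgetful fiber bundle $V_S\to F_{|S|}(\partial\Omega)$ whose fiber is $F_{N-|S|}(\bar\Omega\setminus|S|\text{ bdry pts})$. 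By multiplicativity of $\chi$ for fiber bundles, $\chi(V_S)=\chi(F_{|S|}(\partial\Omega))\cdot\chi(\text{fiber})$; the crucial 2D fact is that $\partial\Omega$ is a disjoint union of circles, so $\chi(\partial\Omega)=0$ and hence
\begin{equation*}
\chi(F_{|S|}(\partial\Omega))=\chi(\partial\Omega)(\chi(\partial\Omega)-1)\cdots(\chi(\partial\Omega)-|S|+1)=0\quad\text{for every }|S|\ge 1.
\end{equation*}
Every term in the inclusion--exclusion therefore vanishes, giving $\chi(\partial_-R_\varepsilon)=0$ and, upon substitution into the Morse formula, the desired identity \eqref{eq:degree-formula}. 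The main obstacle is the rigorous execution of the Morse index formula on the cornered $R_\varepsilon$, together with a proper identification of the inflow boundary as a stratified space compatible with transversality --- this is precisely the content of the authors' Lemma~\ref{l:deg-in-Theta-I}, which hinges on the vanishing of the Euler characteristic of each connected component of $\partial\Omega$ (see also Remark~\ref{Odd} for the higher even-dimensional analogue).
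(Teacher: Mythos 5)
Your overall strategy (Morse's index formula $\sum\mathrm{ind}=\chi(R)-\chi(\partial_-R)$ on a truncated configuration space, plus an inclusion--exclusion argument showing $\chi(\partial_-R)=0$ because $\chi(\partial\Omega)=0$) is a legitimate and genuinely different packaging of the same topological input as the paper, which instead bends the Robin function so that $\tilde\Phi_N\to+\infty$ on the whole boundary, applies the classical Poincar\'e--Hopf theorem on a sublevel set, and then removes the spurious boundary critical points via a decoupling lemma and the vanishing of $\chi$ of configuration spaces of the boundary strip. However, as written your argument has a genuine gap exactly where the paper's analytic work lies. First, $\nabla\Phi_{N,h}$ blows up (and is not defined) when some $x_i\in\partial\Omega$, so Morse's formula cannot be applied on $R_\varepsilon=\bar\Omega^N\setminus U_\varepsilon$; you must replace $R_\varepsilon$ by an interior region such as $\{d(x_i,\partial\Omega)\ge\delta',\ |x_i-x_j|\ge\varepsilon\}$. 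Once you do, the claimed inflow/outflow structure is no longer a consequence of the heuristic ``the pair singularity dominates near $D_N$, the Robin term dominates near $\partial\Omega^N$'': on the corner region where $|x_i-x_j|=\varepsilon$ while $d(x_i,\partial\Omega),d(x_j,\partial\Omega)\ll\varepsilon$, the Dirichlet Green's function degenerates ($\mathcal{G}(x_i,x_j)\to 0$ and $\nabla_{x_i}\mathcal{G}(x_i,x_j)$ is no longer $\sim-\tfrac{1}{2\pi}\tfrac{x_i-x_j}{|x_i-x_j|^2}$, the image charge nearly cancelling the real one), while the Robin term contributes a large vector normal to $\partial\Omega$ whose projection on the normal to the face $\{|x_i-x_j|=\varepsilon\}$ has no definite sign. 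So transversality with your sign assignment can fail on the diagonal faces near $\partial\Omega$, and corner smoothing cannot repair a sign that is simply not there. This mixed interior/boundary interaction (including clusters of three or more points, for which pairwise dominance is insufficient even to rule out zeros near $D_N$) is precisely what the paper controls through the rescaling/blow-up analysis of Lemmas \ref{l:distance}--\ref{l:distance-5} and the half-space Green's function Lemmas \ref{lemmaGk}--\ref{LemmaXiHH}, and what the bending construction together with Lemma \ref{l:deg-in-Theta-I} and Lemma \ref{l:degtildePhi2} is designed to bypass (note also that the ``crucial $\chi=0$ in 2D'' enters the paper through Lemma \ref{l:degtildePhi2} and Proposition \ref{p:configurations}, not through Lemma \ref{l:deg-in-Theta-I} as you suggest). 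Your reduction of the problem to $\chi(F_N(\Omega))$ and to $\chi(\partial_-R)=0$ is the right skeleton, but without a quantitative boundary analysis replacing the asserted transversality, the proof is incomplete.
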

\noindent Proving Proposition \ref{p:degree} is the purpose of Section \ref{SectChenLin}. 

\medskip

We are now in position to prove Theorems \ref{MainThm} and \ref{ThmDegree}, starting with the specific case where $\beta>0$ is out of the set $4\pi \mathbb{N}^\star$ of the critical energy levels.

\begin{proof}[Proof of the degree formula of Theorem \ref{ThmDegree} for $\beta\not \in 4\pi\mathbb{N}^\star$] During this whole first part, we fix $N\in \mathbb{N}$ and $\beta\in (4\pi N, 4\pi (N+1))$. For $t\in [0,1]$ and $p\in [1,2]$, let $T_{p,\beta}$ and $T_{t,\beta}^{MF}$ be the Fredholm-type nonlinear operators given for any non-negative non-zero function $v\in C^1(\bar{\Omega})$ by 
 $$T_{p,\beta}(v)=v-\Delta^{-1}\left(\frac{2\beta v^{p-1} e^{v^p}}{p \left(\int_\Omega \left(e^{v^p}-1\right) dx\right)^{\frac{2-p}{p}}\left(\int_\Omega v^p e^{v^p} dx \right)^{\frac{2(p-1)}{p}}} \right) $$
 and 
 $$ T_{t,\beta}^{MF}(v)=v-\Delta^{-1}\left(\frac{2\beta e^v}{\int_\Omega (e^u-t) dx} \right)\,, $$
where $\Delta^{-1}f$ is the solution $w$ of $\Delta w=f$ with zero Dirichlet boundary conditions. By Theorem \ref{ThmBUp} and formula \eqref{ModifiedBetaMeanField} with standard elliptic theory and \eqref{ControlLambda1}, there exist constants $C>c>0$ such that 
\begin{equation*}
\text{$\|u\|_{C^1(\bar{\Omega})}< C$ and $u> c v_1$ in $\Omega$}
\end{equation*}
 for all $u$ positive in $\Omega$ solving $T_{t,\beta}^{MF}(u)=0$ for some $t\in [0,1]$, and all $u$ solving $T_{p,\beta}(u)=0$, i.e. \eqref{IntermEq}, for some $p\in [1,2]$, where $v_1$ is still a given first Dirichlet eigenfunction of $\Delta$ chosen positive in $\Omega$. Let $C^1_0(\bar{\Omega})$ be the closed subspace of $C^1(\bar{\Omega})$ consisting of the functions vanishing on $\partial\Omega$. Let $\mathcal{V}$ be the open subset of $C^1_0(\bar{\Omega})$ given by 
$$\mathcal{V}=\left\{\quad v\in C^1_0(\bar{\Omega})\quad\text{ s.t. }
\begin{array}{l}
v>c v_1\text{ in }\Omega\,,\\
\|v\|_{C^1(\bar{\Omega})}< C
\end{array}
 \qquad\right\}\,. $$ 
Using Hopf's lemma, observe that only positive functions in $\Omega$ are in the closure of $\mathcal{V}$ for the $C^1(\bar{\Omega})$-topology, in particular neither the zero nor any sign-changing function is in this closure. Then, 
$$\text{ $(p,v)\mapsto T_{p,\beta}(v)$ and $(t,v)\mapsto T_{t,\beta}^{MF}(v)$}$$
 are continuous from $[1,2]\times \mathcal{V}$ and $[0,1]\times \mathcal{V}$ respectively into $C^1_0(\bar{\Omega})$, by standard elliptic theory. Observe in particular that this continuity does hold for $T_{p,\beta}$ up to $p=1$ despite the jump of $(p,t)\in [1,2]\times (0+\infty) \mapsto pt^{p-1} e^{t^p}$ at $t=0^+$ as $p\to 1$. Using also that these operators do not vanish on the boundary of $\mathcal{V}$ by construction, the total Leray-Schauder degree $d_{p,\beta}(\Omega):=\mathrm{deg}_{LS}(T_{p,\beta},\mathcal{V},0)$ (resp. $d_{t,\beta}^{MF}(\Omega):=\mathrm{deg}_{LS}(T_{t,\beta}^{MF},\mathcal{V},0)$) of all the solutions of \eqref{IntermEq} (resp. of all the functions $v>0$ in $\Omega$ such that $T_{t,\beta}^{MF}(v)=0$) is well-defined, see for instance Nirenberg \cite{Nirenberg}.

\begin{rem}
It is convenient to work here within the space $C^1_0(\bar{\Omega})$, but one could ask whether the degree thus defined coincides with the one that one could define within the natural variational space $H^1_0$. The arguments in Li \cite[Appendix B]{PrescSnLi} slightly transposed to the present situation show that it is actually the case. 
\end{rem} 
 
\noindent  Now, for $t=0$, the formula for the total degree $d_{0,\beta}^{MF}(\Omega)$ of all the solutions of \eqref{MeanFieldEquation} is given by Theorem \ref{ThmChenLin}. Moreover, we clearly have $T_{1,\beta}=T_{1,\beta}^{MF}$, so that $d_{1,\beta}(\Omega)=d_{1,\beta}^{MF}(\Omega)$. Using then our definition of $\mathcal{V}$, Theorem \ref{ThmChenLin} and the homotopy invariance of the total Leray-Schauder degree as far as compactness holds, we have $d_{p,\beta}(\Omega)=d_{1,\beta}(\Omega)$ for all $p\in [1,2]$ on the one hand, while $d_{t,\beta}^{MF}(\Omega)=d_{0,\beta}^{MF}(\Omega)=d_{1,\beta}^{MF}(\Omega)$ for all $t\in [0,1]$ on the other hand, which concludes the proof of Theorem \ref{ThmDegree}, and thus that of Theorem \ref{MainThm}, in this first case where $\beta\not \in 4\pi\mathbb{N}^\star$.  
\end{proof}

\begin{rem}\label{RemBupSol}
Let $p\in [1,2]$ be fixed. A posteriori, by contraposing the homotopy invariance of the Leray-Schauder degree as far as compactness holds, observe that the effective degree jump of $d_{p,\beta}(\Omega)$ at the levels $\beta\in 4\pi \mathbb{N}^\star$, as soon as the genus $g(\Omega)$ is greater than $1$, imposes the existence of a sequence of blowing-up solutions with limiting energy $\beta=4\pi N$ for all $N\in \mathbb{N}^\star$ as in Theorem \ref{ThmBUp} with $p_\varepsilon=p$ (see also Deng-Musso \cite{DengMusso} or del Pino-Musso-Ruf \cite{DelPNewSol,DelPBeyond} for specific constructions of such blowing-up solutions for \eqref{IntermEq}).
\end{rem}

In order to complete the proof of Theorems \ref{MainThm} and \ref{ThmDegree}, it remains to treat the case $\beta\in 4\pi \N^\star$. This will be based on the estimate in the following theorem, saying roughly that for $p\in (1,2]$ the amount of Dirichlet energy near each blow-up point approaches $4\pi$ \emph{from above}. This estimate was first proven in the radial case and for $p=2$ by Mancini and the second author \cite{MartMan}, then extended to the case of a bounded domain in $\R^2$ by the third author \cite{WhenExtremals} (see also Ibrahim-Masmoudi-Nakanishi-Sani \cite{MasNakSan} for related results), finally on a closed two-dimensional surface \cite[Theorem 5.1]{DMMT}. Taking Proposition \ref{PropMovPlane} into account, the proof of \cite[Theorem 5.1]{DMMT} can be easily adapted to yield:

\begin{thm}\label{t:4pi+} Assume $p_\ve= p\in (1,2]$ for all $\varepsilon$ and let $u_\ve$, $\lambda_\ve$, $\beta_\ve\to \beta =4\pi N\in 4\pi \N^\star$ be as in Theorem \ref{ThmBUp}. Then
\begin{equation}\label{eq:4pi+}
\beta_\ve\ge 4\pi\left( N+\frac{4(p-1)(1+o(1))}{p^2}\sum_{1\le i\le N} \frac{1}{\gamma_{i,\ve}^{2p}}\right)\quad \text{as }\ve\to 0
\end{equation}
where the sequence $(\gamma_{i,\varepsilon})_\varepsilon$ is given by Proposition \ref{PropWeakPwEst} for all $i$.
\end{thm}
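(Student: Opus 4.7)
The plan is to import the framework of \cite[Theorem 5.1]{DMMT} and check that, thanks to Proposition \ref{PropMovPlane}, nothing essential changes when passing from a closed surface to a smoothly bounded domain with Dirichlet boundary condition. Indeed, by Proposition \ref{PropWeakPwEst} together with \eqref{CritFarBdry}, each concentration point $x_{i,\ve}$ stays at distance $\ge\delta'>0$ from $\partial\Omega$, and the profile $u_0\in C^2(\bar\Omega\setminus\mathcal S)$ is smooth up to the boundary; thus in any fixed compact neighbourhood of $x_i$ the blow-up analysis is identical to the one on a closed surface, and the Dirichlet condition only simplifies integration by parts and Green-function estimates.

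First I would fix $i$ and work in a small ball $B_{r}(x_{i,\ve})$ with $r$ small but independent of $\ve$. Following the approach of \cite{DMMT,MartMan}, one replaces the bare bubble $T_0(y)=\ln(1+|y|^2)$ by the refined radial ansatz $\ptms$ solving (the subcritical radial counterpart of) $\Delta \ptms = p\lambda \ptms^{p-1}e^{\ptms^p}$ with peak $\gamma=\gamma_{i,\ve}$ and scale $\mu=\mu_{i,\ve}$, whose $L^\infty$ level and mass are prescribed so that $u_\ve\approx \ptms(\cdot-x_{i,\ve})$ in $C^1$ on the inner region $|x-x_{i,\ve}|\le R\mu$. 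The correction of $\ptms$ with respect to $T_0$, a $1/\gamma^p$-expansion quantified in \cite[Sections 2--3]{DMMT}, produces an explicit positive energy excess when its Dirichlet integral is computed in the scaled coordinates: a direct change of variables yields
\[
\int_{B_{R\mu}(x_{i,\ve})} p\lambda u_\ve^p e^{u_\ve^p}\,dx = 8\pi\(1+\frac{4(p-1)}{p^2}\frac{1}{\gamma_{i,\ve}^{2p}}+o\(\frac{1}{\gamma_{i,\ve}^{2p}}\)\),
\]
where the constant $4(p-1)/p^2$ is the one coming from the quadratic correction of $T_0$ inside $t\mapsto pt^{p-1}e^{t^p}$ around $t=\gamma$, and vanishes at $p=1$.

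Next I would handle the intermediate and outer regions. In the neck region $R\mu\le|x-x_{i,\ve}|\le r$ the weak gradient estimate of Proposition \ref{PropWeakPwEst} and the selection/exhaustion procedure of \cite[Section 4]{DMMT} show that $u_\ve^p e^{u_\ve^p}$ contributes only $o(\gamma_{i,\ve}^{-2p})$ to $\int p\lambda u_\ve^p e^{u_\ve^p}$; away from $\mathcal S$ the convergence $u_\ve\to u_0$ in $C^1_{loc}(\bar\Omega\setminus\mathcal S)$, combined with $\lambda_\ve\to 0^+$, implies $\int_{\Omega\setminus\cup_i B_r(x_{i,\ve})} \lambda_\ve u_\ve^p e^{u_\ve^p}\,dx=o(\gamma_{i,\ve}^{-2p})$. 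Testing the equation against $u_\ve$ gives $\int_\Omega|\nabla u_\ve|^2\,dx=p\lambda_\ve\int_\Omega u_\ve^p e^{u_\ve^p}\,dx$, and summing the local contributions in $i=1,\dots,N$ yields a lower bound for $\int u_\ve^p e^{u_\ve^p}\,dx$. A parallel, slightly easier expansion for $\int(e^{u_\ve^p}-1)\,dx$ (which equals $8\pi N/(2\lambda_\ve)$ to leading order up to a $\lambda_\ve|\Omega|$ term of the form \eqref{ModifiedBetaMeanField}) plugged into the definition of $\beta_\ve$ yields \eqref{eq:4pi+}.

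The main obstacle is the bookkeeping needed to extract the \emph{exact} coefficient $4(p-1)/p^2$ and, most importantly, its \emph{positive sign}: one needs the sharp radial ansatz and its expansion from \cite[(2.6)--(3.8)]{DMMT}, specialised to $h\equiv 0$ as indicated after the statement of Theorem \ref{ThmBUp}, and one must verify that the cross terms between the $N$ bubbles and between each bubble and the background $u_0$ (governed by $\mathcal{G}$ and $\mathcal{H}$) contribute at the negligible order $o(\gamma_{i,\ve}^{-2p})$, so that they cannot absorb the $1/\gamma_{i,\ve}^{2p}$ excess and possibly flip its sign. Once this bookkeeping is in place, \eqref{eq:4pi+} follows exactly as in the proof of \cite[Theorem 5.1]{DMMT}.
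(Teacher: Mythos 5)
Your proposal takes essentially the same route as the paper: the paper's entire argument for this theorem is that, since Proposition \ref{PropMovPlane} (together with Proposition \ref{PropWeakPwEst}) rules out boundary blow-up, the proof of \cite[Theorem 5.1]{DMMT} carries over verbatim to the Dirichlet setting, which is exactly your strategy. The extra local expansion you sketch is not needed at the level of detail the paper provides (and its normalization is slightly off for $p<2$, e.g. the local integral of $p\lambda u_\ve^p e^{u_\ve^p}$ scales like $\gamma_{i,\ve}^{2-p}$ rather than being a fixed multiple of $\pi$), but since both you and the paper defer the sharp bookkeeping to \cite[Section 5]{DMMT}, this does not affect the argument.
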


\begin{proof}[Conclusion of the proof of Theorems \ref{MainThm} and \ref{ThmDegree}] We fix now $p\in (1,2]$. Then, from Theorem \ref{t:4pi+} we get that $\beta_\varepsilon>\beta$ for all $\varepsilon$ small enough in Theorem \ref{ThmBUp}. As a consequence, for any given $N\in \mathbb{N}$ and small $\eta\in (0,1]$, we have compactness of all the solutions of \eqref{IntermEq} for all $\beta\in [4\pi (N+\eta), 4\pi (N+1)]$. In particular, we may define as above an open neighborhood $\mathcal{V}'\subset C^1_0(\bar{\Omega})$ containing all these solutions and whose closure contains only positive functions in $\Omega$.  Then, the map $(\beta, v)\mapsto T_{p,\beta}(v)$ is continuous from $[4\pi (N+\eta), 4\pi (N+1)]\times \mathcal{V}'$ to $C^1_0(\bar{\Omega})$ and, by a deformation argument as above, we get that the total Leray-Schauder degree $\mathrm{deg}_{LS}(T_{p,\beta},\mathcal{V}',0)$ of all the solutions of \eqref{IntermEq} is well-defined and does not depend on $\beta\in [4\pi (N+\eta), 4\pi (N+1)]$: Theorems \ref{MainThm} and \ref{ThmDegree} are proven.
\end{proof}

\begin{rem}\label{NonAutonomousVersion}
\upshape{Relying now on \cite{DruThiI}, we may get a more general non-homogeneous version of Theorem \ref{MainThm}, which can be stated as follows:
\begin{thm}\label{MainThmh}
Let $\Omega\subset \mathbb{R}^2$ be a smoothly bounded domain and $h$ be a smooth positive $C^2$-function in $\bar{\Omega}$. Let $F_h$ be given in $H^1_0$ by
$$ F_h(v):=\int_\Omega \left(e^{v^2}-1 \right) h~ dx $$
and $\mathcal{M}_\beta\subset H^1_0$ be as in \eqref{DefMBeta}. Assume that $\Omega$ is not contractible. Then, given any positive real number $\beta>0$, there exists a nonnegative function $u$, critical point of $F_h$ constrained to $\mathcal{M}_\beta$. In particular, $u$ is smooth  and solves \eqref{MainEqf} below for $p=2$.
\end{thm}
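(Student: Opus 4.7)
The plan is to mirror the four-step degree-theoretic argument used to prove Theorem \ref{MainThm}, replacing every nonlinearity by its weighted analogue and substituting the blow-up package of \cite{DruThiI} for that of \cite{DMMT}. The Euler-Lagrange equation for critical points of $F_h|_{\mathcal{M}_\beta}$ reads
\begin{equation*}
\Delta u = 2\lambda u\, h\, e^{u^2}, \quad u>0 \text{ in } \Omega, \quad u=0 \text{ on } \partial\Omega, \quad \int_\Omega |\nabla u|^2\, dx = \beta,
\end{equation*}
and one embeds it into the $p$-dependent family
\begin{equation*}
\Delta u = p \lambda u^{p-1}\, h\, e^{u^p}, \quad u>0 \text{ in } \Omega, \quad u=0 \text{ on } \partial\Omega,
\end{equation*}
with the obvious $h$-weighted energy normalization (multiply the integrands in the third line of \eqref{IntermEq} by $h$), for $p\in[1,2]$. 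At $p=1$ this deforms to the weighted mean-field problem \eqref{MeanFieldEquationh}, whose total Leray-Schauder degree is $\binom{N-\chi(\Omega)}{N}$ by Theorem \ref{ThmChenLin}. Since $\Omega$ is non-contractible, $\chi(\Omega)\le 0$ and this binomial is at least $1$.

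The key technical input is the weighted analogue of Theorem \ref{ThmBUp}, asserting energy quantization $\beta\in 4\pi\mathbb{N}^\star$ upon pointwise blow-up, uniformly in $p\in[1,2]$. For $p=2$ this is exactly what is developed in \cite{DruThiI} for a smooth positive $h$, while the $p$-dependence in the unweighted case is handled in \cite{DMMT}; the two extensions are essentially orthogonal and can be combined. The boundary issue is handled unchanged by Proposition \ref{PropMovPlane}, whose moving plane and Kelvin transform argument applies to any $C^1$ right-hand side of the form $f(x,v)$ nonnegative and nondecreasing in $v$; in particular the concentration points $x_{i,\varepsilon}$ in the weighted analogue of Proposition \ref{PropWeakPwEst} cannot collapse to $\partial\Omega$, and the crucial conclusion $\lambda_\varepsilon\to 0^+$ is preserved. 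The weight $h$ only enters the bubble equation as a positive constant $h(x_i)>0$ evaluated at the limit concentration point, which is harmless and matches the $\ln h(x_i)$ term already present in \eqref{defPhi}.

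With this compactness in hand one proceeds exactly as in the first half of the proof of Theorem \ref{ThmDegree}: define the Fredholm operators $T_{p,\beta,h}$ and $T_{t,\beta,h}^{MF}$ on the open neighborhood $\mathcal{V}\subset C^1_0(\bar\Omega)$ of functions bounded in $C^1$ and bounded below by $cv_1$; verify joint continuity in all parameters (the jump of $pt^{p-1}e^{t^p}$ at $t=0^+$ as $p\to 1$ is again harmless thanks to the uniform lower bound $u\ge cv_1>0$ on $\mathcal{V}$); and invoke homotopy invariance of the Leray-Schauder degree to conclude
\begin{equation*}
d_{p,\beta,h}(\Omega) = d^{MF}_{\beta,h}(\Omega) = \binom{N-\chi(\Omega)}{N} \geq 1
\end{equation*}
for every $\beta\in(4\pi N,4\pi(N+1))$, so that a positive solution exists. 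For the critical values $\beta\in 4\pi\mathbb{N}^\star$, the weighted analogue of Theorem \ref{t:4pi+} (again a specialization of the estimates of \cite{DruThiI} combined with Proposition \ref{PropMovPlane}) gives left-compactness for $p\in(1,2]$, allowing a one-sided homotopy to extend the degree identity to every $\beta>0$.

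The main obstacle is the careful reproof of the blow-up machinery for the two-parameter family with a nontrivial weight $h$ on a bounded domain. Since the boundary difficulty is isolated and dispatched by Proposition \ref{PropMovPlane}, and since the $h$-dependence only modifies the location of concentration points and the leading-order balance by a multiplicative positive factor, no new conceptual ingredient is needed beyond those already assembled in \cite{DMMT} and \cite{DruThiI}; it is a matter of checking that the error estimates of \cite{DMMT} remain valid when the radial ansatz of \cite[Section 2]{DMMT} is weighted by $h(x_{i,\varepsilon})$ rather than by a constant.
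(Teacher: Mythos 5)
There is a genuine gap, and it sits exactly where you wave it away: the boundary. Your argument rests on the claim that Proposition \ref{PropMovPlane} ``applies to any $C^1$ right-hand side of the form $f(x,v)$ nonnegative and nondecreasing in $v$''. It does not: the moving-plane/Kelvin-transform argument of de Figueiredo--Lions--Nussbaum and Han requires the nonlinearity to be autonomous (or monotone in $x$ along the moving direction), which is precisely why the proposition is stated for $f=f(v)$ and why the paper stresses that it can be used because the right-hand sides in \eqref{IntermEq} are autonomous. With a genuinely $x$-dependent weight $h$, no such boundary control comes for free, so your ``weighted analogue of Theorem \ref{ThmBUp}, uniformly in $p\in[1,2]$'' is not a routine combination of \cite{DMMT} and \cite{DruThiI}: excluding boundary blow-up for the weighted problem is the most delicate part of \cite{DruThiI}, it is done there only for $p=2$, and the analogous statement for $p\in(1,2)$ is explicitly open (Remark \ref{BdryBup}). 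Consequently the homotopy in $p$ from $2$ down to $1$ with $h$ present, on which your whole scheme relies, is not backed by any available compactness result.

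The paper's proof restructures the deformations precisely to dodge this obstruction: it never varies $p$ with a nonconstant weight. Keeping $p=2$ fixed and $\beta\notin 4\pi\mathbb{N}^\star$, it deforms the weight along $h_r=(1-r)+rh$, using the quantization/compactness of \cite{DruThiI} (which includes the boundary analysis at $p=2$) to conclude $d_{2,\beta,h}(\Omega)=d_{2,\beta,1}(\Omega)=\binom{N-\chi(\Omega)}{N}$, the last value being supplied by Theorem \ref{ThmDegree}, whose proof runs the $p$-homotopy only for $h\equiv 1$ where Proposition \ref{PropMovPlane} is legitimate. The critical levels $\beta\in 4\pi\mathbb{N}^\star$ are then absorbed by the one-sided energy estimate of Theorem \ref{t:4pi+}, again using the absence of boundary blow-up from \cite{DruThiI} at $p=2$ rather than the moving-plane proposition. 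If you want to salvage your route, you must either prove boundary blow-up exclusion for the weighted problem for all $p\in[1,2]$ (currently open for $p\in(1,2)$) or reorganize the homotopies as the paper does.
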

\begin{proof}[Proof of Theorem \ref{MainThmh}] We fix $p=2$. Given $\beta\in (4\pi N , 4\pi (N+1)]$, we claim that the total Leray-Schauder degree $d_{p,\beta,h}(\Omega)$ of the solutions of 
\begin{equation}\label{MainEqf}\tag{$\mathcal{E}_{p,\beta,h}$}
\begin{cases}
&\Delta u= p\lambda h u^{p-1} e^{u^p}\,,\, u>0\,\text{ in }\Omega\,,\\
&u=0\text{ on }\partial \Omega\,,\\
&\frac{\lambda p^2}{2} \left(\int_\Omega \left(e^{u^p}-1\right) h~dx\right)^{\frac{2-p}{p}}\left(\int_\Omega u^p e^{u^p} h~dx \right)^{\frac{2(p-1)}{p}}=\beta\,,
\end{cases}
\end{equation}
is well-defined and does not depend of $h$, positive $C^2$-function in $\bar{\Omega}$, so that it always equals $\binom{N-\chi(\Omega)}{N}$, formula given by Theorem \ref{ThmDegree} for $h\equiv 1$. Indeed, first given $\beta>0$ out of $4\pi \mathbb{N}^\star$, it follows from the quantization and the associated compactness result in \cite{DruThiI} that $d_{p,\beta,h_r}(\Omega)$ is well-defined and does not depend of $r\in [0,1]$, where $h_r=(1-r)+r h$. Fixing now such an $h$, $N\in \mathbb{N}$ and $\eta\in (0,1]$ and using that boundary blow-up cannot either occur for the solutions of
\eqref{MainEqf} (see Remark \ref{BdryBup} just below), the energy estimate in Theorem \ref{t:4pi+} from \cite[Section 5]{DMMT} still holds here and gives the compactness of all the solutions of \eqref{MainEqf} for all $\beta\in [4\pi(N+\eta), 4\pi (N+1)]$, so that $d_{p,\beta,h}(\Omega)$ does not depend on $\beta$ in this range by following the second part of the above proof: Theorem \ref{MainThmh} is proven.
\end{proof}
}
\end{rem}

\begin{rem}\label{BdryBup}
Excluding boundary blow-up is actually the most delicate part in \cite{DruThiI} where $p$ is fixed equal to $2$. The analogue property appears to be open for $p\in (1,2)$.
\end{rem}

\section{Degree formula for the vector field associated to \ref{MeanFieldEquationh} on a bounded domain $\Omega$}\label{SectChenLin}

The purpose of this section is to prove Proposition \ref{p:degree}. Let $\mathcal{G}$ be the Green's function of $\Delta$ with zero Dirichlet boundary conditions and $\mathcal{H}$ be its regular part with the convention in \eqref{ConventionRegPartGreen}. Given $y\in \Omega$, notice that
$$
  \mathcal{G}(x,y) \to + \infty \quad \hbox{ as } x \to y  \qquad\text{ and that} \qquad 
  \mathcal{H}(x,x) \to - \infty \quad \hbox{ as } x \to \partial \Omega. 
$$
Given a positive function $h\in C^1(\overline{\Omega})$, let $\Phi_{N,h}$ be given by \eqref{defPhi}. 
To fix the notation, we will write
$$\nabla \Phi_{N,h}(x_1,\dots,x_N)=(\nabla_{x_1}\Phi_{N,h}(x_1,\dots,x_N),\dots, \nabla_{x_N}\Phi_{N,h}(x_1,\dots,x_N)),$$
with
\begin{equation*}\label{nablaPhiN}
\begin{split}
\nabla_{x_j}\Phi_{N,h}(x_1,\dots,x_N) &=
16\pi \sum_{\scriptstyle 1\le \ell\le N\atop \scriptstyle \ell\ne j} \nabla \M{G}(x_j,x_\ell) + 8\pi \nabla\M{H}(x_j,x_j) +\frac{\nabla h(x_j)}{h(x_j)},
\end{split}
\end{equation*}
where we denote by $\nabla\M{G}(x,y)$ and $\nabla\M{H}(x,y)$ the gradient with respect to the first variable.

\medskip

\noindent In order to get Proposition \ref{p:degree}, we first need to analyse the compactness of the critical points of $\Phi_{N,h}$.

\subsection{Compactness of critical points of $\Phi_{N,h}$}\label{ss:cpt}

In this subsection we prove that, uniformly with respect to $t\in [0,1]$, critical the points of
$$\Phi_{N,h}^t(x_1,\dots,x_N) := 8 \pi \sum_{1\le i,j\le N\atop i \neq j} \mathcal{G}(x_i,x_j) + 4\pi \sum_{1\le i\le N}  
  \mathcal{H}(x_i,x_i) +t\sum_{1\le i\le N} \ln h(x_i)$$
cannot be arbitrarily close 
to the boundary of $\Omega^N\setminus D_N$, implying that the degree in Proposition \ref{p:degree} is well-defined. More precisely, define 
\begin{equation}\label{OmegaNdelta}
\begin{split}
(\Omega^N\setminus D_N)_\delta:=\{(x_1, \dots, x_N)\in \Omega^N\setminus D_N: &\, |x_i-x_j|> \delta \text{ for }1\le i<j\le N,\\
& d(x_i,\de\Omega)> \delta\text{ for }1\le i\le N \}.
\end{split}
\end{equation}

\begin{lem}\label{l:distance}
For every $M>0$, there exists $\delta > 0$ such that $$|\nabla \Phi_{N,h}^t(x_1,\dots,x_N)|\ge M,\quad \text{for } (x_1,\dots, x_N)\in (\Omega^N\setminus D_N)\setminus (\Omega^N\setminus D_N)_\delta, \; t\in [0,1].$$
As a consequence, the degree of $\nabla \Phi_{N,h}^t$ is well-defined on $\Omega^N \setminus D_N$ as
$$\deg(\nabla\Phi_{N,h}, \Omega^N\setminus D_N,0):=\deg(\nabla\Phi_{N,h}, A,0),$$ 
for any open set $A$ with $(\Omega^N\setminus D_N)_{\delta}\subset A \subset\Omega^N\setminus D_N$.
\end{lem}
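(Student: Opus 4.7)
The plan is to argue the first statement by contradiction. If it failed, for some $M>0$ I would find sequences $t_n\in[0,1]$ and $\mathbf{x}_n=(x_{1,n},\dots,x_{N,n})\in\Omega^N\setminus D_N$ with $|\nabla\Phi_{N,h}^{t_n}(\mathbf{x}_n)|\le M$ while $\mathbf{x}_n$ escapes every $(\Omega^N\setminus D_N)_\delta$. After extracting a subsequence, $x_{i,n}\to x_i^\infty\in\overline{\Omega}$ and $t_n\to t_\infty$, and I would partition the indices into clusters by common limit. At least one cluster $C$, of size $k\ge 1$, is bad in the sense that either (Case A) its common limit $y$ lies in $\Omega$ and $k\ge 2$, or (Case B) its common limit $y$ lies in $\partial\Omega$. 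In each case I construct a test vector $X_n\in\mathbb{R}^{2N}$ supported on $C$ whose scalar product with $\nabla\Phi_{N,h}^{t_n}$ I can compute explicitly, contradicting the Cauchy--Schwarz bound $|\nabla\Phi_{N,h}^{t_n}\cdot X_n|\le M|X_n|$.

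For Case A I set $X_{i,n}=x_{i,n}-\bar x_n$ for $i\in C$ (and $0$ elsewhere), with $\bar x_n$ the cluster's center of mass, so $|X_n|\le\sqrt{k}\,r_n$ where $r_n:=\max_{i\in C}|x_{i,n}-\bar x_n|\to 0$. Pairing $(i,\ell)\leftrightarrow(\ell,i)$ in the within-cluster singular part of $\nabla\mathcal{G}$ gives
\begin{equation*}
\sum_{\substack{i,\ell\in C\\ i\ne\ell}}\frac{(x_{i,n}-x_{\ell,n})\cdot(x_{i,n}-\bar x_n)}{|x_{i,n}-x_{\ell,n}|^2}=\binom{k}{2},
\end{equation*}
producing the nonzero constant contribution $-4k(k-1)$; the remaining terms (the smooth part of $\mathcal{H}$, interactions with clusters whose limit differs from $y$, the Robin term which is bounded since $y\in\Omega$, and the $\log h$ term) are all $O(r_n)$. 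Hence $\nabla\Phi_{N,h}^{t_n}\cdot X_n=-4k(k-1)+o(1)$, forcing $|\nabla\Phi_{N,h}^{t_n}|\ge c/r_n\to\infty$, contrary to $|\nabla\Phi_{N,h}^{t_n}|\le M$.

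For Case B I take the constant vector $X_{i,n}=\nu(y)$, the inward unit normal at the boundary limit $y$, for $i\in C$ (and $0$ else), so $|X_n|=\sqrt{k}$. The within-cluster singular terms $\nu\cdot(x_i-x_\ell)/|x_i-x_\ell|^2$ vanish by antisymmetry in $i,\ell$. The dominant contribution now comes from the Robin function: flattening the boundary near $y$ by a local conformal chart reduces to the half-plane, where the reflection principle yields $\mathcal{H}(x,y)=\tfrac{1}{2\pi}\ln|x-y^*|$, hence $\nu\cdot\nabla_1\mathcal{H}(x,x)=\tfrac{1}{4\pi\,d(x,\partial\Omega)}+O(1)$ and likewise $\nu\cdot\nabla_1\mathcal{H}(x_i,x_\ell)\ge 0$ to leading order for $i\ne\ell\in C$. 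The interactions with clusters whose limit differs from $y$ and the $\log h$ term stay bounded. Consequently
\begin{equation*}
\nabla\Phi_{N,h}^{t_n}\cdot X_n\ge \sum_{i\in C}\frac{2}{d(x_{i,n},\partial\Omega)}-K\longrightarrow+\infty
\end{equation*}
for some constant $K$, contradicting $|\nabla\Phi_{N,h}^{t_n}\cdot X_n|\le M\sqrt{k}$.

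This proves the first assertion. Taking $M=1$ then gives $\delta_0>0$ such that, uniformly in $t\in[0,1]$, the critical set of $\nabla\Phi_{N,h}^t$ lies in $(\Omega^N\setminus D_N)_{\delta_0}$, while $|\nabla\Phi_{N,h}^t|\ge 1$ on its complement inside $\Omega^N\setminus D_N$; the Brouwer degree $\deg(\nabla\Phi_{N,h}^t,A,0)$ is classically defined on any open $A$ with $(\Omega^N\setminus D_N)_{\delta_0}\subset A\subset\Omega^N\setminus D_N$ and is independent of such $A$ by excision. The most delicate step will be Case B: transferring the half-plane identity for $\mathcal{H}$ to a general smooth $\Omega$ requires either the $2$D conformal invariance of the Green's function or a direct expansion $\mathcal{H}(x,y)=\tfrac{1}{2\pi}\ln|x-y^*|+\mathrm{smooth}$ with uniform control of the remainder near $\partial\Omega$, so as to ensure that the Robin blow-up actually dominates all off-diagonal and curvature corrections.
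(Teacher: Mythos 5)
Your strategy is correct, but it is genuinely different from the paper's. The paper argues by contradiction and \emph{rescales} at the smallest scale $r_k=\min_i\min\{d(x_{i,k},\partial\Omega),\min_{j\ne i}|x_{i,k}-x_{j,k}|\}$, splitting into the case $r_k=o(d_k)$ (limit functional $\Xi_{\mathcal{D},\mathbb{R}^2}$ on $\mathbb{R}^2$, excluded by a convex-hull argument, Lemma \ref{lemmaXi2}) and the case $d_k\sim r_k$ (limit functional on a half-plane, excluded by monotonicity, Lemma \ref{LemmaXiH}), the passage to the limit resting on the Green's-function comparison Lemmas \ref{lemmaGk}--\ref{lemmaHk}. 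You instead pair $\nabla\Phi^t_{N,h}$ with explicit test vectors at the original scale: a dilation field centered at the cluster barycenter for interior collisions, where the pairing identity gives the exact constant $-4k(k-1)$ and all remaining terms are $O(r_n)$ (this is an exact Pohozaev-type substitute for Lemma \ref{lemmaXi2}, and your Case A is complete and more elementary than the paper's Case 1); and the constant inward normal for boundary clusters, where the singular parts cancel by antisymmetry and the Robin term produces $\sum_i 2/d_i\to+\infty$ (the same mechanism as the $\Xi^+$ part of Lemma \ref{LemmaXiH}). The price is that your Case B needs a uniform boundary expansion of $\nabla_1\mathcal{H}$, both diagonal, $\nu\cdot\nabla_1\mathcal{H}(x,x)=\tfrac{1}{4\pi d(x,\partial\Omega)}(1+o(1))$, and off-diagonal for two points collapsing together to the boundary; you flag this but do not prove it, and note that the off-diagonal correction is \emph{not} simply $O(1)$: in a curved domain it can be of size $o(1)\cdot(d_i+d_\ell)^{-1}$, so the correct statement to establish is that the errors are $o\bigl(\sum_i d_i^{-1}\bigr)+O(1)$, i.e.\ dominated by the Robin blow-up (in the flat half-plane model the off-diagonal terms are even positive, so a one-sided comparison suffices). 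This is exactly what the paper's maximum-principle comparison with the half-plane Green's function (Lemma \ref{lemmaHk}, used after rescaling) provides, so at that point the two arguments need the same analytic input; your route buys a shorter, quantitative argument for interior collisions and avoids the limiting functionals, while the paper's rescaling handles the boundary regime with estimates that are only needed on compact sets of the blown-up domain. The final deduction of the well-defined degree by excision is the same in both.
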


\begin{proof}
Consider sequences
$$(t_k)_{k\in\mathbb{N}}\subset[0,1],\quad (x_{1,k},\dots,x_{N,k})_{k\in\mathbb{N}}\subset\Omega^N\setminus D_N$$
such that
\begin{equation}\label{nablaPhitk}
\nabla \Phi_{N,h}^{t_k}(x_{1,k},\dots,x_{N,k})=O(1),\quad \text{as }k\to\infty.
\end{equation}
Set
\begin{equation}\label{rik}
r_{i,k}:=\min\left\{d_{i,k},\min_{j\ne i}|x_{i,k}-x_{j,k}|\right\},\quad d_{i,k}:=d(x_{i,k},\de\Omega).
\end{equation}
Up to extracting a subsequence and reordering the indices we can assume that
\begin{equation}\label{rik2}
r_{1,k}\le r_{j,k}, \quad \text{for } 1 \le  j\le N,
\end{equation}
and set
\begin{equation}\label{xkrk}
r_k:=r_{1,k},\quad x_k:=x_{1,k},\quad d_k:= d_{1,k}.
\end{equation}
It suffices to prove that $\liminf_{k\to\infty} r_k>0$. Then we assume, by contradiction, that
\begin{equation}\label{rik0}
\lim_{k\to\infty} r_{k}=0.
\end{equation}
We now scale the domain $\Omega$ and the functional $\Phi_{N,h}^{t_k}$ at $x_k$ by a factor $r_k$. More precisely, we set $\Omega_{k}:=r_k^{-1}(\Omega-x_k)$ and, for $(z_1,\dots, z_N)\in \Omega_k^N\setminus D_N$, we define
\begin{equation*}
\begin{split}
\Xi_{k}( z_1,\dots, z_N)&=\Phi_{N,h}^{t_k}(x_k+r_k z_1,\dots, x_k+r_k z_N)\\
&=8\pi\sum_{\scriptstyle 1\le i,j\le N \atop \scriptstyle i\ne j}  \mathcal{G}_{k}(z_{i},z_{j})
+4 \pi \sum_{1\le i\le N} \mathcal{H}_{k}(z_{i},z_{i}) +t_k\sum_{1\le i\le N}\ln h_{k}(z_{i}),
\end{split}
\end{equation*}
where $\M{G}_{k}$ and $\M{H}_{k}$ are the Green's function of $\Delta$ on $\Omega_k$ and its regular part, and $h_{k}(z):= h(x_{k}+r_{k}z)$.
Then \eqref{nablaPhitk} gives
\begin{equation}\label{nablaXik}
\nabla \Xi_k(z_{1,k},\dots, z_{N,k})=O(r_k),\quad \text{as }k\to\infty,
\end{equation}
where
$$(z_{1,k},\dots, z_{N,k}):=(r_k^{-1}(x_{1,k}-x_k), \dots, r_k^{-1}(x_{N,k}-x_k)).$$
Up to a further subsequence we set
\begin{equation}\label{Di}
\M{D}:=\{i\in\{1,\dots,N\}: |x_{i,k}-x_{k}|=O(r_{k})\text{ as }k\to\infty\},
\end{equation}
and have
\begin{equation}\label{Di2}
\lim_{k\to\infty}\frac{|x_{i,k}-x_{k}|}{r_k}=+\infty\quad \text{for }i\in \{1,\dots,N\}\setminus \M{D}.
\end{equation}
Clearly $\M{D}\neq\emptyset$, since $1\in \M{D}$. Define also, up to a subsequence,
\begin{equation*}
Z'=(z_i)_{i\in\mathcal{D}},\quad Z'_k=(z_{i,k})_{i\in\mathcal{D}}\to Z'_\infty=(z_{i,\infty})_{i\in\mathcal{D}},\quad \text{as }k\to\infty.
\end{equation*}
Notice that \eqref{rik} and \eqref{Di} imply
\begin{equation}\label{xikxjk}
\frac{1}{C}r_{k}\le |x_{i,k}-x_{j,k}|\le C r_{k}\quad \text{for }i,j \in \M{D}, \quad i\ne j,
\end{equation}
and in particular $z_{i,\infty}\ne z_{j,\infty}$ for $i,j\in\M{D}$, $i\ne j$.
We now define the ``reduced'' {functional}
\begin{equation}\label{XiDk}
\begin{split}
\Xi_{\M{D},k}(z_{1},\dots,z_{N}):=& 8\pi\sum_{\scriptstyle i,j\in \M{D}\atop \scriptstyle i\ne j}  \mathcal{G}_{k}(z_{i},z_{j})
+4 \pi \sum_{i\in \M{D}} \mathcal{H}_{k}(z_{i},z_{i}) +t_k\sum_{i\in \M{D}}\ln h_{k}(z_{i})\\
&+16\pi\sum_{i\in\M{D}}\sum_{\scriptstyle 1\le j\le N\atop \scriptstyle j\not\in \M{D}}  \mathcal{G}_{k}(z_{i},z_{j}).
\end{split}
\end{equation}
From \eqref{nablaXik} we get
\begin{equation}\label{nablaZ'}
\nabla_{Z'}\Xi_{\M{D},k}(z_{1,k},\dots,z_{N,k})=O(r_k).
\end{equation}
Taking \eqref{rik0} into account, we now consider 2 cases.

\medskip

\noindent\textbf{Case 1.} Assume that, up to a subsequence,
\begin{equation}\label{dik}
r_k=o(d_k) \quad\text{as }k\to\infty.
\end{equation}
Then $\Omega_{k}$ invades $\R^2$ as $k\to\infty$ and, choosing $K\subset\R^2$ compact containing $z_{i,k}$ for $i\in \M{D}$ and $k$ large, we get from Lemma \ref{lemmaGk} \begin{align*}
\nabla \mathcal{G}_{k}(z_{i,k},z_{j,k})&= \frac{1}{2\pi}\nabla \ln\frac{1}{|z_{i,k}-z_{j,k}|}+O\(\frac{r_{k}}{d_k}\),\quad \text{for }i,j\in\M{D},\\
\nabla \mathcal{G}_{k}(z_{i,k},z_{j,k})&= \frac{1}{2\pi}\nabla \ln\frac{1}{|z_{i,k}-z_{j,k}|}+O\(\frac{r_{k}}{d_k}\)=o(1),\quad \text{for }i\in\M{D}, \,j\not\in \M{D},\\
\nabla \mathcal{H}_{k}(z_{i,k},z_{i,k})&= O\(\frac{r_{k}}{d_k}\)\quad \text{for }i\in\M{D},
\end{align*}
as $k\to\infty$. Moreover $\nabla \ln h_{k}(z_{i,k})=O(r_{k})$ as $k\to\infty$ for $i\in\M{D}$. 
Then from \eqref{XiDk} and \eqref{nablaZ'} we 
infer
\begin{equation}\label{XiIJk2}
\nabla \Xi_{\M{D},\R^2}(Z'_k)=o(1),\quad \text{as }k\to\infty,
\end{equation}
where
\begin{equation}\label{XiJR2}
\Xi_{\M{D},\R^2}(Z'):= 4 \sum_{\scriptstyle i,j\in \M{D}\atop \scriptstyle i\ne j} \ln\frac{1}{|z_{i}-z_{j}|}.
\end{equation}
Notice that, thanks to \eqref{rik}, \eqref{Di} and \eqref{dik},  we have $\M{D}\cap \{2,\dots,N\}\ne \emptyset$. Since also $1\in \M{D}$, the sum in \eqref{XiJR2} is non-empty.
Letting now $k\to\infty$ in \eqref{XiIJk2} it follows that $\nabla \Xi_{\M{D},\R^2}( Z'_{\infty})=0$. 
This contradicts Lemma \ref{lemmaXi2}.



\medskip

\noindent\textbf{Case 2.} Assume that we are not in Case 1. Then since $r_k\ge C d_k$ for some $C>0$ and $r_k\le d_k$, we have up to a subsequence
$$\frac{d_k}{r_k}\to L \ge 1,\quad \text{as }k\to\infty.$$
Then, up to a rotation, $\Omega_k$ converges to the half-space $H=\{(x,y)\in \R^2:y<L$\}.  Let $\M{G}_H$ and $\M{H}_H$ be the Green's function on $H$ and its regular part, as given explicitly in the appendix.
Choosing $K\subset H$ compact containing $z_{i,k}$ for $i\in \M{D}$ and $k$ large, by Lemma \ref{lemmaHk} we estimate
\begin{align*}
\nabla \mathcal{G}_{k}(z_{i,k},z_{j,k})&= \nabla \M{G}_H(z_{i,k},z_{j,k})+O\(r_k\),\quad \text{for }i,j\in\M{D}\\
\nabla \mathcal{G}_{k}(z_{i,k},z_{j,k})&=o(1),\quad \text{for }i\in\M{D}, \,j\not\in \M{D}\\
\nabla \mathcal{H}_{k}(z_{i,k},z_{i,k})&= \nabla \mathcal{H}_{H}(z_{i,k},z_{i,k})+ O\(r_{k}\)\quad \text{for }i\in\M{D},
\end{align*}
and $\nabla \ln h_{k}(z_{i,k})=O(r_{k})$ as $k\to\infty$ for $i\in\M{D}$,
hence,
from \eqref{XiDk} and \eqref{nablaZ'} 
we obtain 
\begin{equation}\label{XiIJk}
\nabla \Xi_{ \M{D},H}(Z'_k)=o(1),\quad \text{as }k\to\infty,
\end{equation}
where
$$\Xi_{ \M{D},H}(Z'):= 8 \pi \sum_{\scriptstyle i,j\in  \M{D}\atop \scriptstyle i\ne j}  \mathcal{G}_H(z_i,z_j) +4\pi  \sum_{i\in  \M{D}} \mathcal{H}_H(z_i,z_i).$$
Letting $k\to \infty$ in \eqref{XiIJk} we get $\nabla \Xi_{ \M{D},H}(Z'_\infty) =0,$
contradicting Lemma \ref{LemmaXiH}.

\medskip

\noindent These contradictions, arising from assumptions \eqref{nablaPhitk} and \eqref{rik0}, complete the proof of the lemma.
\end{proof}

A first consequence of Lemma \ref{l:distance} is that the degree of $\Phi_{N,h}$ does not depend on $h$, so that in the following we will work with $h\equiv 1$.

\begin{cor}\label{cordegPhi}
We have
$$\deg(\nabla \Phi_{N,h},\Omega^N\setminus D_N,0)= \deg(\nabla \Phi_N,\Omega^N\setminus D_N,0 ),$$
where, for $(x_1,\dots,x_N)\in \Omega^N\setminus D_N$,
$$\Phi_N(x_1,\dots,x_N):=8 \pi \sum_{\scriptstyle 1\le i,j\le N\atop \scriptstyle i \neq j} \mathcal{G}(x_i,x_j) + 4\pi \sum_{1\le i\le N} 
  \mathcal{H}(x_i,x_i).$$
\end{cor}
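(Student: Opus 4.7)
The plan is simply to apply homotopy invariance of the Brouwer degree to the family $\Phi_{N,h}^t$ already introduced in Subsection \ref{ss:cpt}, exploiting the uniform-in-$t$ compactness statement provided by Lemma \ref{l:distance}. Observe that $\Phi_{N,h}^0=\Phi_N$ and $\Phi_{N,h}^1=\Phi_{N,h}$, so the family $\{\nabla\Phi_{N,h}^t\}_{t\in[0,1]}$ is a continuous homotopy between the two vector fields in question on $\Omega^N\setminus D_N$.

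The first step is to choose the open set on which the homotopy is performed. Apply Lemma \ref{l:distance} with, say, $M=1$ to obtain $\delta>0$ such that
\[
|\nabla\Phi_{N,h}^t(x_1,\dots,x_N)|\ge 1
\qquad\text{for every } t\in[0,1] \text{ and every }(x_1,\dots,x_N)\in(\Omega^N\setminus D_N)\setminus(\Omega^N\setminus D_N)_\delta.
\]
Fix any open set $A$ with
\[
(\Omega^N\setminus D_N)_{2\delta}\subset A\subset \overline{A}\subset (\Omega^N\setminus D_N)_{\delta/2}\subset \Omega^N\setminus D_N,
\]
so that $\partial A$ is contained in the region where $\nabla\Phi_{N,h}^t$ is bounded below uniformly in $t$. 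In particular, $\nabla\Phi_{N,h}^t$ does not vanish on $\partial A$ for any $t\in[0,1]$.

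The second step is the degree computation. The map $(t,X)\mapsto \nabla\Phi_{N,h}^t(X)$ is continuous on $[0,1]\times \overline{A}$, and by the previous paragraph it avoids $0$ on $[0,1]\times\partial A$. Hence the homotopy invariance of the Brouwer degree yields
\[
\deg(\nabla\Phi_N,A,0)=\deg(\nabla\Phi_{N,h}^0,A,0)=\deg(\nabla\Phi_{N,h}^1,A,0)=\deg(\nabla\Phi_{N,h},A,0).
\]
Finally, since $A$ satisfies $(\Omega^N\setminus D_N)_{2\delta}\subset A\subset \Omega^N\setminus D_N$, the last assertion of Lemma \ref{l:distance} (applied separately to $\Phi_N$ and to $\Phi_{N,h}$, with the parameter $2\delta$) identifies these two degrees with $\deg(\nabla\Phi_N,\Omega^N\setminus D_N,0)$ and $\deg(\nabla\Phi_{N,h},\Omega^N\setminus D_N,0)$ respectively. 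This gives the desired equality. No essential obstacle arises here: the whole content is carried by Lemma \ref{l:distance}, which already provides the uniform-in-$t$ a priori control that makes the homotopy admissible.
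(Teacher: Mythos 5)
Your argument is exactly the paper's: the paper also deforms $\Phi_{N,h}=\Phi_{N,h}^1$ into $\Phi_N=\Phi_{N,h}^0$ through the family $\Phi_{N,h}^t$ and invokes homotopy invariance of the degree, the admissibility of the homotopy being precisely the uniform-in-$t$ content of Lemma \ref{l:distance}. The one flaw is in your choice of the set $A$: requiring only $(\Omega^N\setminus D_N)_{2\delta}\subset A\subset\overline{A}\subset(\Omega^N\setminus D_N)_{\delta/2}$ does not ensure what you then claim. For instance $A=(\Omega^N\setminus D_N)_{3\delta/2}$ satisfies these inclusions, yet $\partial A$ lies inside $(\Omega^N\setminus D_N)_{\delta}$, where Lemma \ref{l:distance} gives no lower bound on $|\nabla\Phi_{N,h}^t|$ and where zeros may well sit; moreover such an $A$ need not contain all zeros of $\nabla\Phi_{N,h}^t$, so $\deg(\nabla\Phi_{N,h},A,0)$ need not coincide with the degree on $\Omega^N\setminus D_N$ as defined in Lemma \ref{l:distance}, whose definition requires $(\Omega^N\setminus D_N)_{\delta}\subset A$ (the lemma provides nothing ``with parameter $2\delta$''). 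The fix is immediate and restores the paper's argument: take any open $A$ with $(\Omega^N\setminus D_N)_{\delta}\subset A$ and $\overline{A}\subset\Omega^N\setminus D_N$, e.g.\ $A=(\Omega^N\setminus D_N)_{\delta/2}$; since $A$ is open and contains $(\Omega^N\setminus D_N)_{\delta}$, one has $\partial A\subset(\Omega^N\setminus D_N)\setminus(\Omega^N\setminus D_N)_{\delta}$, hence $|\nabla\Phi_{N,h}^t|\ge 1$ on $\partial A$ for all $t\in[0,1]$, the homotopy is admissible, and both degrees over $A$ are exactly the ones defined in Lemma \ref{l:distance} for $t=0$ and $t=1$.
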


\begin{proof} We can deform $\Phi_{N,h}=\Phi^1_{N,h}$ into $\Phi_N=\Phi^0_{N,h}$ and use the invariance of the degree under homotopy, since for $\delta$ sufficiently small and $t\in [0,1]$, $\Phi_{N,h}^t$ has no critical points on $\partial (\Omega^N\setminus D_N)_\delta$.
\end{proof}




\subsection{Bending the Robin function to apply the Poincar\'e-Hopf theorem} \label{ss:pf}

Since we will not be able to apply the theorem of Poincar\'e-Hopf directly to $\nabla\Phi_N$ to compute $\deg(\nabla \Phi_N,\Omega^N\setminus D_N,0)$, as $\nabla\Phi_N$ does not always point outwards near $\partial (\Omega^N\setminus D_N)$, we now introduce a modified function $\tilde{\Phi}_N$ by {\em bending} the function 
$\mathcal{H}(y,y)$ upwards near the boundary of $\Omega$, in order to obtain a function 
that tends to $+\infty$ on $\partial (\Omega^N\setminus D_N)$. More precisely, let $\tau\in C^\infty([0,\infty))$ be a non-decreasing function such that
$$\tau(t)=1\quad \text{for }t\ge \frac12, \quad \tau(t)=-1\quad \text{for }t\le \frac{1}{4},$$
and set 
\begin{equation}\label{defHtilde}
\tilde{\mathcal{H}}(x):=\mathcal{H}(x,x)\tau\left(\frac{d(y,\de\Omega)}{\tilde\delta}\right),
\end{equation}
see Figure \ref{fig:tilde-h}. Let then $\tilde{\Phi}_N$ be given in $\Omega^N\setminus D_N$ by

\begin{figure}[h] 
\centering
\includegraphics[width=0.3\linewidth]{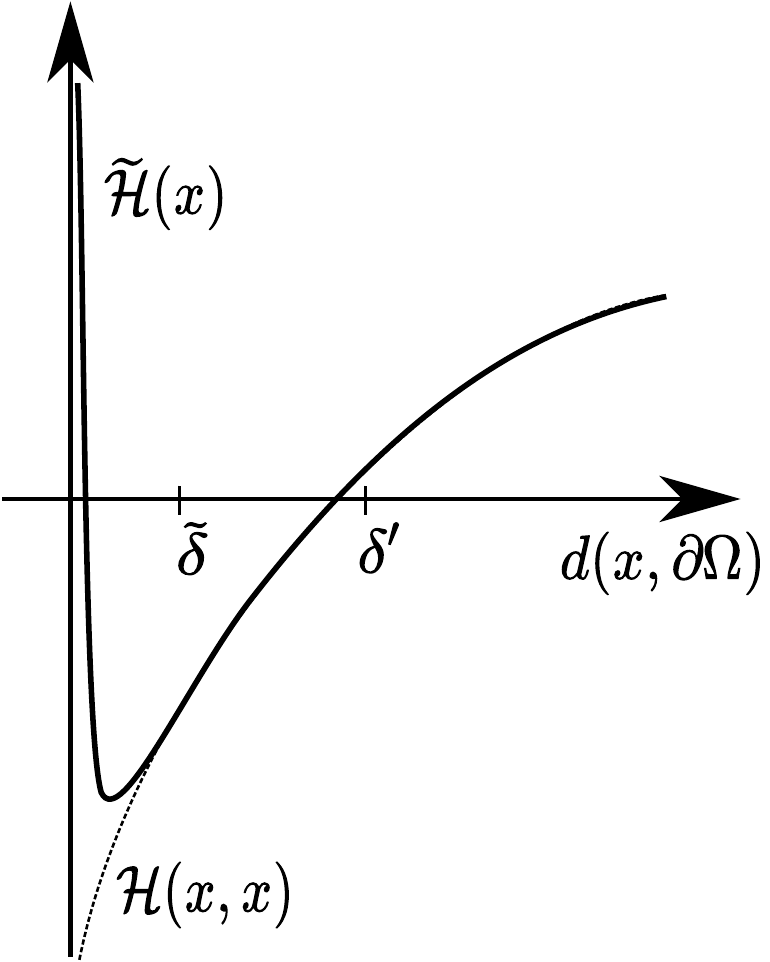}
\caption{Heuristic sketch of $\tilde{\mathcal{H}}$ near $\partial\Omega$}
\label{fig:tilde-h}
\end{figure}

\begin{equation}\label{defPhitilde}
 \tilde{\Phi}_N(x_1,\dots,x_N) = 8 \pi \sum_{\scriptstyle 1\le i,j\le N\atop \scriptstyle i \neq j}  \mathcal{G}(x_i,x_j) + 4\pi \sum_{1\le i\le N} 
 \tilde{\mathcal{H}}(x_i). 
\end{equation}

We next show that for $0<\delta'<\delta/2$ and for $\tilde \delta$ sufficiently small (depending on $\delta'$), the topological strips determined by $\delta'< d(y,\de\Omega)<\delta$ is a {\em forbidden zone} for critical points of $\tilde{\Phi}$, as made precise in the following lemma.

\begin{lem}\label{l:distance-2}
For every $M'>0$ there exist $\delta>0$, such that for any $\delta'\in (0,\delta/2)$ there exists $\tilde\delta\in (0,\delta'/2)$ such that, setting $\tilde{\Phi}_N$ depending on $\tilde \delta$ as in \eqref{defHtilde}-\eqref{defPhitilde}, we have 
$$|\nabla \tilde \Phi_N(x_1,\dots,x_N)|\ge M' $$
if $(x_1,\dots,x_N)\in \Omega^N\setminus D_N$ is such that $\delta'\le d(x_i,\de\Omega)\le \delta$ for some $1\le i\le N$.
In particular, if $(x_1, \dots, x_N)$ is a critical point of $\tilde{\Phi}_N$, then either 
$d(x_i, \de \Omega) > \delta$ or $d(x_i, \de \Omega) < \delta'$ for every $i=1,\dots,N$.
\end{lem}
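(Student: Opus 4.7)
The plan is to reduce the claim, via the observation that $\tilde\delta<\delta'/2$ makes the bending inactive at the $x_i$ lying in the strip, to a rescaling argument parallel to Lemma \ref{l:distance}. Indeed, if $d(x_i,\partial\Omega)\ge\delta'$ and $\tilde\delta<\delta'/2$, then $d(x_i,\partial\Omega)/\tilde\delta>2$, so by the definition of $\tau$ the cutoff equals $1$ identically in a neighborhood of $x_i$; consequently $\nabla\tilde{\mathcal{H}}(x_i)=\nabla R(x_i)$ (with $R(y):=\mathcal{H}(y,y)$), and $\nabla_{x_i}\tilde\Phi_N=\nabla_{x_i}\Phi_N$. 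Since $|\nabla\tilde\Phi_N|\ge|\nabla_{x_i}\tilde\Phi_N|$, it suffices to prove that $|\nabla_{x_i}\Phi_N(x_1,\dots,x_N)|\ge M'$ whenever $\delta'\le d(x_i,\partial\Omega)\le\delta$ for some $i\in\{1,\dots,N\}$.

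Suppose for contradiction the existence of sequences $\delta_k\to 0$, $\delta'_k\in(0,\delta_k/2)$, $\tilde\delta_k\in(0,\delta'_k/2)$ and configurations $(x_{1,k},\dots,x_{N,k})\in\Omega^N\setminus D_N$ with, after reordering, $d(x_{1,k},\partial\Omega)\in[\delta'_k,\delta_k]$ and $|\nabla\tilde\Phi_{N,k}|<M'$ for all $k$. Set $r_k:=r_{1,k}$ and $\mathcal{D}$ as in \eqref{rik}--\eqref{Di}; the ordering \eqref{rik2} forces $d(x_{j,k},\partial\Omega)\ge r_k$ for every $j$. Up to extraction, the rescaled domain $\Omega_k=r_k^{-1}(\Omega-x_{1,k})$ converges either to $\mathbb{R}^2$ (Case 1, $r_k=o(d(x_{1,k},\partial\Omega))$) or to a half-plane $H$ (Case 2, $d(x_{1,k},\partial\Omega)/r_k\to L\in[1,\infty)$).

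The novelty compared to Lemma \ref{l:distance} is the choice of $\tilde\delta=\tilde\delta(\delta')$ satisfying $\tilde\delta/\delta'\to 0$ as $\delta'\to 0$ (e.g.\ $\tilde\delta:=(\delta')^2$). With this choice, every $x_{j,k}$, $j\in\mathcal{D}$, lies outside the bending region $\{y:d(y,\partial\Omega)<\tilde\delta_k/2\}$ for $k$ large: in Case 1, $d(x_{j,k},\partial\Omega)\ge d(x_{1,k},\partial\Omega)-O(r_k)\ge\delta'_k/2\gg\tilde\delta_k$; in Case 2, $r_k\ge\delta'_k/(2L)$ eventually, hence $d(x_{j,k},\partial\Omega)\ge r_k\gg\tilde\delta_k$. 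It follows that $\nabla_{x_j}\tilde\Phi_N=\nabla_{x_j}\Phi_N$ for all $j\in\mathcal{D}$, and the contradiction hypothesis then yields $|\nabla_{x_j}\Phi_N|<M'$ for every such $j$. From here the argument closes exactly as in Lemma \ref{l:distance}: the reduced gradient $\nabla_{Z'}\Xi_k$ is $O(r_k)\to 0$, and passage to the limit produces $\nabla\Xi_{\mathcal{D},\mathbb{R}^2}(Z'_\infty)=0$ (Case 1) or $\nabla\Xi_{\mathcal{D},H}(Z'_\infty)=0$ (Case 2), contradicting Lemma \ref{lemmaXi2} or Lemma \ref{LemmaXiH} respectively. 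The ``in particular'' clause for critical points of $\tilde\Phi_N$ is then immediate.

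The principal technical obstacle is ensuring that the hierarchical smallness $\tilde\delta\ll\delta'\ll\delta$ does keep the whole cluster $\{x_{j,k}\}_{j\in\mathcal{D}}$ outside the bending region in every rescaling regime; this is achieved by combining the choice $r_k=r_{1,k}$ with the ordering convention (which together force $d(x_{j,k},\partial\Omega)\ge r_k$ for every $j$) and the quantitative scale separation $\tilde\delta/\delta'\to 0$.
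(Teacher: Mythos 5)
Your opening reduction is not valid: from $|\nabla\tilde\Phi_N|\ge|\nabla_{x_i}\tilde\Phi_N|=|\nabla_{x_i}\Phi_N|$ you pass to the claim that $|\nabla_{x_i}\Phi_N|\ge M'$ whenever the \emph{single} point $x_i$ lies in the strip, and this claim is false for $N\ge 2$. In the half-plane model, place $x_j$ on the inward normal through $x_i$ at depth $t\,d(x_i,\partial\Omega)$ with $t$ solving $t^2+8t-1=0$: the attraction of $16\pi\,\mathcal{G}$ towards $x_j$ exactly cancels the Robin term $4\pi\nabla_{x_i}\mathcal{H}(x_i,x_i)$, so the $x_i$-component of $\nabla\Phi_N$ vanishes (and can be made arbitrarily small in a general smooth domain once $d(x_i,\partial\Omega)$ is small), while only the \emph{other} components are large. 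Only the full gradient, or the gradient restricted to a whole cluster, is controlled — which is why the paper's Lemma \ref{l:distance} and its proof are formulated that way. Your subsequent contradiction argument does in fact use the full-gradient hypothesis componentwise on the cluster, so this sentence is not what the proof rests on, but it is an error.

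The substantive gap is in the blow-up step. The inequality $d(x_{j,k},\partial\Omega)\ge r_k$ for \emph{every} $j$, and the mutual separation \eqref{xikxjk} of the cluster points (needed so that the limits $z_{i,\infty}$ are distinct and the limiting functionals of Lemma \ref{l:distance} make sense), require $r_k=\min_i r_{i,k}$, i.e. rescaling at the point realizing the minimum in \eqref{rik}--\eqref{rik2}; your exclusion of the bending region at the cluster points instead uses $d(x_{1,k},\partial\Omega)\ge\delta'_k$ at the \emph{center}, i.e. requires the center to be the strip point. In general these are two different points, and you cannot reorder so that both hold. Consequently the cluster may contain points at depth $\ll r_k$, possibly inside the bending region or in the transition zone, where $\nabla\tilde{\mathcal H}$ is of size $d^{-1}$ or $|\mathcal H|\,|\tau'|/\tilde\delta_k\sim|\ln\tilde\delta_k|/\tilde\delta_k$, and points whose rescaled distance to $\partial H$ tends to $0$; then the limit objects are no longer $\Xi_{\mathcal D,\R^2}$ or $\Xi_{\mathcal D,H}$, and ``closes exactly as in Lemma \ref{l:distance}'' is unjustified (this is precisely the regime the paper must treat separately in Lemma \ref{l:distance-3}, via $\Xi^-_{\mathcal D,H}$ and the minus sign in Lemma \ref{LemmaXiH}, and in Lemmata \ref{l:distance-4}--\ref{LemmaXiHH}). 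There is also a quantifier muddle — you negate with arbitrary $\tilde\delta_k\in(0,\delta'_k/2)$ and then ``choose'' $\tilde\delta=(\delta')^2$, and you let $\delta_k\to0$, which is what forces the marked point to collapse to the boundary and drags in the whole rescaling machinery. The paper's proof avoids all of this: fix $\delta:=\min_{1\le I\le N}\delta_I$ with $\delta_I$ given by Lemma \ref{l:distance} for $M=M'+1$; with $\delta,\delta'$ \emph{fixed} and only $\tilde\delta_k\to0$, split the indices into those whose distance to $\partial\Omega$ stays bounded below (these contain the strip point, so Lemma \ref{l:distance} gives $|\nabla\Phi_I|\ge M'+1$, and the bending is inactive at them for large $k$) and those converging to $\partial\Omega$, which enter only through cross terms $\nabla\mathcal G(x_{i,k},x_{j,k})=o(1)$; this contradicts $|\nabla\tilde\Phi_{N,k}|<M'$ with no blow-up analysis at all. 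To salvage your route you would have to supply the missing analysis of clusters meeting the bending/transition region, at which point you would essentially be reproving Lemma \ref{l:distance-3} inside this lemma.
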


\begin{proof} 
{Fix} $M'>0$ and let $\delta:=\min_{1\le I\le N}\delta_I$, where $\delta_I$ is the constant $\delta$ appearing in Lemma \ref{l:distance} applied with $N$ replaced by $I$ and with $M=M'+1>0$.

Assuming by contradiction that the lemma is false, we can find  $\delta'\in (0,\delta/2)$, and sequences $\tilde\delta_k\to 0$ and $(x_{1,k},\dots, x_{N,k})\subset\Omega^N\setminus D_N$ such that, for $\tilde \Phi_{N,k}$ defined using $\tilde \delta_k$, we have
\begin{equation}\label{nablatildePhik}
|\nabla \tilde\Phi_{N,k}(x_{1,k},\dots,x_{N,k})|< M
'\quad \text{as }k\to\infty,
\end{equation}
and such that, up to relabelling,
\begin{equation}\label{delta2delta}
\delta'\le d(x_{1,k},\de\Omega)\le\delta.
\end{equation}
Up to passing to subsequence and a further relabelling, let $I\in \{1,\dots,N\}$ such that
\begin{equation}\label{dyik}
\lim_{k\to\infty} d(x_{i,k},\de\Omega)>0 \quad \text{for }1\le i\le I,
\end{equation}
and
$$\lim_{k\to\infty} d(x_{j,k},\de\Omega)=0,\quad I+1\le j\le N.$$
By \eqref{delta2delta} and Lemma \ref{l:distance} we have
\begin{equation}\label{nablaPhiIe}
|\nabla \Phi_I(x_{1,k},\dots, x_{I,k})|\ge M'+1,
\end{equation}
where
$$\Phi_I(x_1,\dots,x_I):=8\pi\sum_{\scriptstyle 1\le i,j\le I\atop \scriptstyle i\ne j}\M{G}(x_i,x_j)+4\pi \sum_{1\le i\le I}\M{H}(x_i,x_i).$$
Observe, on the other, hand that
\begin{equation}\label{nablaGbordo}
\nabla \mathcal{G}(x_{i,k},x_{j,k})=o(1),\quad 1\le i\le I,\quad I+1\le j\le N,
\end{equation}
with $o(1)\to 0$ as $k\to\infty$ (since $x_{j,k}$ is approaching $\de\Omega$, while $x_{i,k}$ is not as $k\to\infty$). Then, by \eqref{dyik}, for $k$ large enough (hence $\tilde\delta_k$ sufficiently small), we 
have
\begin{equation*}
\begin{split}
\tilde\Phi_{N,k}(x_{1,k},\dots, x_{N,k})=&\Phi_I(x_{1,k},\dots, x_{I,k})+16\pi\sum_{\scriptstyle 1\le i\le I\atop \scriptstyle I+1\le j\le N}\M{G}(x_{i,k},x_{j,k})\\
&+8\pi\sum_{I+1\le i, j\le N}\M{G}(x_{i,k},x_{j,k})+4\pi\sum_{I+1\le J\le N}\tilde{\M{H}}_k(x_{j,k}).
\end{split}    
\end{equation*}
Then, setting $X':=(x_1,\dots,x_I)$, from \eqref{nablatildePhik}, \eqref{nablaPhiIe} and \eqref{nablaGbordo} we get
\begin{equation*}
\begin{split}
M'>&|\nabla_{X'}\tilde\Phi_{N,k}(x_{1,k},\dots,x_{N,k})|= |\nabla\Phi_I(x_{1,k},\dots,x_{I,k})|+o(1)\\
\ge& M'+1+o(1),
\end{split}    
\end{equation*}
giving a contradiction for $k$ large enough.
\end{proof}

\noindent We now have the analog of Lemma \ref{l:distance} for $\tilde\Phi_N.$
\begin{lem}\label{l:distance-3} For every $M''>0$ there exists $\delta''\le \tfrac{\tilde \delta}{2}$ depending on $\tilde\delta$ and $M''$ such that 
$$|\nabla \tilde\Phi_N(x_1,\dots, x_N)|\ge M''$$
for every $(x_1,\dots, x_N)\in (\Omega^N\setminus D_N)\setminus (\Omega^N\setminus D_N)_{\delta''}$. In particular, all the critical points of $\nabla \tilde\Phi_N$ are in $(\Omega^N\setminus D_N)_{\delta''}$ and we can define
$$\deg(\nabla\tilde \Phi_{N}, \Omega^N\setminus D_N,0):=\deg(\nabla\tilde \Phi_{N}, A,0),$$ 
for any open set $A$ such that $(\Omega^N\setminus D_N)_{\delta''}\subset A \subset\Omega^N\setminus D_N.$
\end{lem}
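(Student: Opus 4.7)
The plan is to mimic the contradiction-via-rescaling argument of Lemma \ref{l:distance}. Assume by contradiction that the conclusion fails, so there exist $M''>0$, a sequence $\delta''_k\downarrow 0$, and configurations $(x_{1,k},\dots,x_{N,k})\in(\Omega^N\setminus D_N)\setminus(\Omega^N\setminus D_N)_{\delta''_k}$ with $|\nabla\tilde\Phi_N(x_{1,k},\dots,x_{N,k})|\le M''$ for all $k$. First I would reuse verbatim the scales $r_{i,k}$, $r_k:=r_{1,k}$, $x_k:=x_{1,k}$, $d_k:=d(x_k,\partial\Omega)$, and the cluster set $\mathcal{D}$ from \eqref{rik}--\eqref{Di2}: the failure of the $\delta''_k$-separation forces $r_k\to 0$. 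Along a subsequence, the rescaled points $z_{i,k}:=(x_{i,k}-x_k)/r_k$ converge for $i\in\mathcal{D}$ to a limit $Z'_\infty=(z_{i,\infty})_{i\in\mathcal{D}}$ with pairwise distinct coordinates; because $r_{i,k}\ge r_k$ implies $d(z_{i,k},\partial\Omega_k)\ge 1$ for $\Omega_k:=r_k^{-1}(\Omega-x_k)$, each $z_{i,\infty}$ also lies at distance $\ge 1$ from the boundary of the limit domain.

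Following Lemma \ref{l:distance}, $\Omega_k$ converges either to all of $\mathbb{R}^2$ (Case 1: $r_k=o(d_k)$) or, up to rotation, to a half-plane $H$ (Case 2: $d_k/r_k\to L\in[1,\infty)$). In Case 1 the contribution of the bending term $\tilde{\mathcal{H}}$ to the rescaled gradient is $O(r_k|\nabla\mathcal{H}(x_{i,k},x_{i,k})|)=O(r_k/d_{i,k})=o(1)$, whether or not $x_{i,k}$ lies inside the region $\{d(\cdot,\partial\Omega)<\tilde\delta/4\}$ where $\tau=-1$, since the modification only multiplies $\mathcal{H}(\cdot,\cdot)$ by $\pm 1$ locally. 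Hence the limit functional is precisely $\Xi_{\mathcal{D},\mathbb{R}^2}$ of \eqref{XiJR2}, and Lemma \ref{lemmaXi2}, applied exactly as in Lemma \ref{l:distance}, Case 1, yields the sought contradiction.

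In Case 2, $d_k=O(r_k)\to 0$ forces $d(x_{i,k},\partial\Omega)=O(r_k)\to 0$ for every $i\in\mathcal{D}$, so for $k$ large a whole neighborhood of each $x_{i,k}$ lies in $\{d(\cdot,\partial\Omega)<\tilde\delta/4\}$, where $\tilde{\mathcal{H}}\equiv-\mathcal{H}(\cdot,\cdot)$. Passing to the rescaled limit exactly as in Lemma \ref{l:distance}, Case 2, I would obtain that $Z'_\infty$ is a critical point of
$$\tilde\Xi_{\mathcal{D},H}(Z'):=8\pi\sum_{\scriptstyle i,j\in\mathcal{D}\atop \scriptstyle i\ne j}\mathcal{G}_H(z_i,z_j)-4\pi\sum_{i\in\mathcal{D}}\mathcal{H}_H(z_i,z_i).$$
To rule this out I would contract $\nabla\tilde\Xi_{\mathcal{D},H}(Z'_\infty)=0$ with the dilation vector field $(z_i)_{i\in\mathcal{D}}$: the scale-invariance $\mathcal{G}_H(\lambda z,\lambda w)=\mathcal{G}_H(z,w)$ yields $z\cdot\nabla_1\mathcal{G}_H(z,w)+w\cdot\nabla_2\mathcal{G}_H(z,w)\equiv 0$, so the double Green's-function sum drops out; the explicit expression $\mathcal{H}_H(z,z)=\tfrac{1}{2\pi}\ln(2z_2)$ gives $z\cdot\nabla_z[\mathcal{H}_H(z,z)]\equiv\tfrac{1}{2\pi}$. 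Summing over $i\in\mathcal{D}$ then forces $0=-2|\mathcal{D}|\ne 0$, the desired contradiction.

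The hard part will be precisely Case 2, where the sign flip of $\tilde{\mathcal{H}}$ alters the limit functional and one must verify that the Pohozaev/dilation identity still produces a non-vanishing contraction (the positive Robin version analogously yields $+2|\mathcal{D}|$, recovering Lemma \ref{LemmaXiH}). Once $|\nabla\tilde\Phi_N|\ge M''$ has been established off $(\Omega^N\setminus D_N)_{\delta''}$, the well-definedness of $\deg(\nabla\tilde\Phi_N,\Omega^N\setminus D_N,0)$ and its independence of the intermediate open set $A$ follow immediately from the excision property of the Brouwer degree.
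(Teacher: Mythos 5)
Your proof follows the paper's route almost verbatim: the same rescaling-contradiction as in Lemma \ref{l:distance}, with the two new ingredients being the control of the bending term in Case 1 and the sign flip $\tilde{\M{H}}=-\M{H}(\cdot,\cdot)$ near $\de\Omega$ in Case 2, which turns the limiting functional into $8\pi\sum_{i\ne j}\M{G}_H(z_i,z_j)-4\pi\sum_i\M{H}_H(z_i,z_i)$. The one genuine difference is how you exclude critical points of this sign-flipped half-plane functional: the paper simply invokes Lemma \ref{LemmaXiH}, whose statement already covers both signs $\Xi^\pm_{J,H}$ (via a monotonicity argument in a coordinate of an extremal point), whereas you re-prove non-existence through a dilation (Pohozaev-type) identity. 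That identity is a perfectly good, arguably cleaner, alternative, but not as you wrote it: in the blow-up normalization $H=\{(x,y):y<L\}$ with $L\ge 1$ and the origin (the limit of the rescaled base point) lies at distance $L$ from $\de H$, so $\M{G}_H$ is \emph{not} invariant under dilations centered at the origin, and $\M{H}_H(z,z)=\tfrac{1}{2\pi}\ln\bigl(2(L-z_2)\bigr)$, not $\tfrac{1}{2\pi}\ln(2z_2)$. You must contract $\nabla\tilde\Xi_{\M{D},H}(Z'_\infty)=0$ with the dilation field centered at a point $p\in\de H$ (equivalently, translate so that $\de H$ passes through the origin): then $\sum_{i}(z_i-p)\cdot\nabla_{z_i}$ annihilates the $\M{G}_H$-sum and the term $-4\pi\sum_i\M{H}_H(z_i,z_i)$ contributes $-2|\M{D}|$, giving the contradiction $0=-2|\M{D}|$ (and $+2|\M{D}|$ for the $+$ sign, recovering Lemma \ref{LemmaXiH} in this special case).

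Two smaller repairs are also needed. In Case 1 your bound $O\bigl(r_k|\nabla\M{H}(x_{i,k},x_{i,k})|\bigr)$ ignores the term produced by $\tau'$ in the transition region $\tfrac{\tilde\delta}{4}\le d(\cdot,\de\Omega)\le\tfrac{\tilde\delta}{2}$; there one argues as the paper does, $|\nabla\tilde{\M{H}}|=O\bigl(|\nabla\M{H}(\cdot,\cdot)|+C_{\tilde\delta}|\M{H}(\cdot,\cdot)|\bigr)=O_{\tilde\delta}(1)$ since the distance to $\de\Omega$ is bounded below by $\tfrac{\tilde\delta}{4}$ there, which after rescaling is again $O(r_k)=o(1)$ (recall $\tilde\delta$ is fixed throughout, consistently with the fact that $\delta''$ is allowed to depend on $\tilde\delta$). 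With these corrections your argument is complete, and the well-definedness of $\deg(\nabla\tilde\Phi_N,\Omega^N\setminus D_N,0)$ indeed follows by excision/homotopy invariance once the lower gradient bound off $(\Omega^N\setminus D_N)_{\delta''}$ is in place.
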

\begin{proof}
The proof by contradiction is essentially identical to the proof of Lemma \ref{l:distance}, upon noticing the following facts. In Case $1$, namely $r_k=o(d_k)$, if (up to a subsequence) $d_k\to 0$ as $k\to\infty$, then
$$\tilde{\M{H}}(x_{i,k})=-\M{H}(x_{i,k},x_{i,k})\quad \text{for }i\in\M{D}, \quad  k\text{ large},$$
hence
$$\nabla \tilde{\M{H}}_k(z_{i,k})= - \nabla \M{H}_k(z_{i,k},z_{i,k})=O(r_k),\quad \text{for }i\in\M{D}, \quad  k\text{ large}.$$
If $d_k\not\to 0$, from \eqref{defHtilde}, we bound
$$|\nabla \tilde{\M{H}}(x_{i,k})|=O(\nabla \M{H}(x_{i,k},x_{i,k})+ C_{\tilde\delta} |\M{H}(x_{i,k},x_{i,k})|)= O(1),\quad \text{as }k\to\infty,\quad i\in\M{D},$$
hence again $\nabla \tilde{\M{H}}_k(z_{i,k})=O(r_k)$ as $k\to\infty$ for $i\in\M{D}$, where $\tilde{\M{H}}_k(z):=\tilde{\M{H}}(x_k+r_kz)$. Then, up to a subsequence,  $Z_{k}'$ converges to $Z_\infty'$, a critical point of $\Xi_{\M{D},\R^2}$, contradicting Lemma \ref{lemmaXi2}.

In Case $2$, since $d(x_{1,k},\de\Omega)\to 0$, we also have $d(x_{i,k},\de\Omega)\to 0$ as $k\to\infty$ for $i\in\M{D}$, hence $\tilde{\M{H}}(x_{i,k})=-\M{H}(x_{i,k},x_{i,k})$ for $i\in \M{D}$ and $k$ large enough. Then, setting $\Xi^-_{\M{D},k}$ as in \eqref{XiDk} with $h_k\equiv 1$ and $\M{H}_k(z_i,z_i)$ replaced by $-\M{H}_k(z_i,z_i)$, we obtain
$\nabla \Xi^-_{ \M{D},H}(Z'_\infty) =0,$ where
$$\Xi^-_{ \M{D},H}(Z'):= 8 \pi \sum_{\scriptstyle i,j\in  \M{D}\atop \scriptstyle i\ne j}  \mathcal{G}_H(z_i,z_j) -4\pi  \sum_{i\in  \M{D}} \mathcal{H}_H(z_i,z_i),$$
contradicting Lemma \ref{LemmaXiH}.
\end{proof}

\noindent We recall next a variant of the classical Poincar\'e-Hopf index theorem, which can be found e.g. in \cite{chang}, pages 99-104 (here we adapt the statement to our purposes).


\begin{prop}\label{p:P-H} 
Let $U\subset\R^n$ be an open set and consider
$f \in C^{2}(U,\R)$. 
Assume also that, for some $b\in  \R$, $f^b : = \{x\in U: f(x) < b \}$
has compact closure, and that $f$ has no critical points in $f^{-1}(b)$. Then we have
\begin{equation}\label{eq:LSeuler}
 \deg(\n f, f^b, 0) = \chi(f^b).
\end{equation}
\end{prop}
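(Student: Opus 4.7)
This is a classical form of the Poincar\'e--Hopf index theorem for a gradient field on a sublevel set; my plan is to reduce to the case of a smooth Morse function by a local perturbation and then combine the standard degree-at-a-nondegenerate-zero formula with Morse's handle decomposition. Since $\n f\neq 0$ on $f^{-1}(b)$ and $\overline{f^b}\subset U$ is compact, continuity yields $|\n f|\ge c>0$ on some open neighborhood $V\supset f^{-1}(b)\cap \overline{f^b}$ inside $U$, so that all critical points of $f$ in $\overline{f^b}$ lie in the compact set $K:=\overline{f^b}\setminus V$, at positive distance from $f^{-1}(b)$. Using a cutoff supported near $K$ together with Sard's theorem, one constructs a smooth function $\tilde f$ arbitrarily $C^2$-close to $f$ with $\tilde f\equiv f$ on $V$, $\tilde f$ Morse near $\overline{f^b}$, and $b$ a regular value of $\tilde f$. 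In particular $\tilde f^b=f^b$ as sets, and $\n \tilde f=\n f$ on $f^{-1}(b)$.

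Next, the linear homotopy $H_t:=(1-t)\n f+t\n \tilde f$ coincides with $\n f$ on $V$, and therefore never vanishes on $f^{-1}(b)$ for $t\in[0,1]$. Homotopy invariance of the Brouwer degree then yields
\begin{equation*}
\deg(\n f, f^b,0)=\deg(\n \tilde f, f^b,0)=\sum_{j=1}^m \mathrm{sgn}\det \mathrm{Hess}\,\tilde f(p_j)=\sum_{j=1}^m (-1)^{\mathrm{ind}(p_j)},
\end{equation*}
where $p_1,\dots,p_m$ are the critical points of $\tilde f$ in $f^b$ (finitely many by compactness) and $\mathrm{ind}(p_j)$ denotes the Morse index; the second equality is the standard formula for the local Brouwer degree at a nondegenerate zero of a gradient field.

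Finally, since $\tilde f\le b$ on $\overline{f^b}$ with equality exactly on the smooth boundary $f^{-1}(b)$, the gradient $\n \tilde f$ points outward there, and Morse's handle decomposition of the compact manifold-with-boundary $\overline{f^b}$ yields a CW complex with one $k$-cell per critical point of index $k$. Hence
\begin{equation*}
\chi(f^b)=\chi(\overline{f^b})=\sum_{j=1}^m (-1)^{\mathrm{ind}(p_j)},
\end{equation*}
which combined with the previous display gives \eqref{eq:LSeuler}. The only delicate step is the perturbation: one must render all critical points nondegenerate while leaving $f^b$ unchanged and producing no critical points on $f^{-1}(b)$. The hypothesis $\n f\neq 0$ on $f^{-1}(b)$ is exactly what allows one to localize the perturbation to the interior and thereby preserve both the domain of the degree and the topology of the sublevel set.
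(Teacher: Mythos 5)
Your argument is correct, and it is worth noting that the paper does not prove Proposition \ref{p:P-H} at all: it is quoted from Chang's book (pp.~99--104), where it is obtained within the framework of Morse theory and critical groups. Your perturbation-plus-Morse-theory proof is essentially the standard argument behind that reference, so in substance you are reconstructing the cited proof rather than diverging from the paper: reduce to a Morse function $\tilde f$ with $\{\tilde f<b\}=f^b$, use $\deg(\n\tilde f,f^b,0)=\sum_j(-1)^{\mathrm{ind}(p_j)}$ for nondegenerate zeros of a gradient, and identify the right-hand side with $\chi(\overline{f^b})=\chi(f^b)$ via the handle (CW) decomposition, the last equality using that $f^b$ is the interior of the compact manifold with boundary $\overline{f^b}$ (collar neighborhood). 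Two small points deserve tightening. First, your bookkeeping is slightly inconsistent: the cutoff supported near $K=\overline{f^b}\setminus V$ necessarily has its transition region inside $V\cap f^b$, so you cannot literally have $\tilde f\equiv f$ on all of $V$; the correct statement is $\tilde f\equiv f$ on a smaller neighborhood of $\partial f^b$, and what you must check is that no critical points are created anywhere in the transition region, not merely on $f^{-1}(b)$ --- this follows from $|\n f|\ge c$ on $V$ together with $C^1$-smallness of the perturbation, which your $C^2$-closeness provides. Second, the boundary of $\overline{f^b}$ is $\partial f^b\subset f^{-1}(b)$, not necessarily the whole level set $f^{-1}(b)$ (which may have components outside $\overline{f^b}$); since $\n f\neq0$ there, every point of $f^{-1}(b)\cap\overline{f^b}$ is a genuine smooth boundary point, which is what the Morse-theoretic step actually needs. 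With these adjustments your proof is complete and self-contained.
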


\begin{lem}\label{l:degtildePhi} We have 
\begin{equation}\label{eq:deg-tilde-Phi}
\deg(\nabla \tilde\Phi_N, \Omega^N \setminus D_N, 0)=\chi (\Omega) (\chi (\Omega) -1) ... (\chi (\Omega) -N + 1).
\end{equation}
\end{lem}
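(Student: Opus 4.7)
The strategy is to apply the Poincar\'e-Hopf formula of Proposition \ref{p:P-H} to $\tilde\Phi_N$ on a large sublevel set $\{\tilde\Phi_N<b\}$. The whole point of replacing $\Phi_N$ by $\tilde\Phi_N$ was to obtain a function that blows up on \emph{all} of $\partial(\Omega^N\setminus D_N)$: on the diagonal $D_N$ because of the Green's function $\mathcal{G}$, and near $\partial\Omega$ because the bending factor $\tau$ flips the sign of $\mathcal{H}(x,x)\to-\infty$ close to $\partial\Omega$. In particular, every sublevel set $\{\tilde\Phi_N<b\}$ has compact closure inside $\Omega^N\setminus D_N$.

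By Lemma \ref{l:distance-3}, all critical points of $\tilde\Phi_N$ lie in the compact set $(\Omega^N\setminus D_N)_{\delta''}$, so the set of critical values is bounded. I would choose $b$ to be a regular value above this bound (generic by Sard). Proposition \ref{p:P-H} then yields
$$\deg(\nabla\tilde\Phi_N,\{\tilde\Phi_N<b\},0)=\chi(\{\tilde\Phi_N<b\}),$$
and by the very definition of $\deg(\nabla\tilde\Phi_N,\Omega^N\setminus D_N,0)$ adopted in Lemma \ref{l:distance-3}, the left-hand side equals $\deg(\nabla\tilde\Phi_N,\Omega^N\setminus D_N,0)$.

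Next, since $\tilde\Phi_N$ has no critical points in $\{\tilde\Phi_N\ge b\}$ and blows up on $\partial(\Omega^N\setminus D_N)$, the renormalized negative gradient flow $-\nabla\tilde\Phi_N/|\nabla\tilde\Phi_N|^2$ (along which $\tilde\Phi_N$ decreases at unit rate) produces a deformation retract of $\Omega^N\setminus D_N$ onto $\{\tilde\Phi_N\le b\}$. Hence
$$\chi(\{\tilde\Phi_N<b\})=\chi(\Omega^N\setminus D_N)=\chi(F_N(\Omega)),$$
where $F_N(\Omega):=\Omega^N\setminus D_N$ is the ordered configuration space. To compute the latter I would invoke the classical Fadell-Neuwirth fibration $F_N(\Omega)\to F_{N-1}(\Omega)$ whose fiber is $\Omega$ minus $N-1$ points; since $\Omega$ is an open surface, this fiber has Euler characteristic $\chi(\Omega)-(N-1)$. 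The multiplicativity of $\chi$ under fibrations, iterated $N$ times, yields
$$\chi(F_N(\Omega))=\chi(\Omega)(\chi(\Omega)-1)\cdots(\chi(\Omega)-N+1),$$
which is the desired formula \eqref{eq:deg-tilde-Phi}.

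The main delicate step is the deformation retract argument: one must verify that the flow $-\nabla\tilde\Phi_N/|\nabla\tilde\Phi_N|^2$ is globally defined on $\{\tilde\Phi_N\ge b\}$ and reaches $\{\tilde\Phi_N=b\}$ in finite time along every orbit, despite the non-compactness of $\Omega^N\setminus D_N$. This works because on each slab $\{b\le\tilde\Phi_N\le b'\}$, which has compact closure in $\Omega^N\setminus D_N$ thanks to the blow-up of $\tilde\Phi_N$ on the boundary, the quantity $|\nabla\tilde\Phi_N|$ is continuous and nowhere zero, hence bounded below by a positive constant; so the flow moves $\tilde\Phi_N$ down to the value $b$ in time exactly $b'-b$ without escaping to $\partial(\Omega^N\setminus D_N)$.
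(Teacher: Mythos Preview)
Your proof is correct and follows essentially the same approach as the paper: choose a large regular value $b$ so that the sublevel set $\{\tilde\Phi_N<b\}$ contains all critical points, apply Proposition~\ref{p:P-H}, use a gradient-flow retraction to identify $\chi(\{\tilde\Phi_N<b\})$ with $\chi(\Omega^N\setminus D_N)$, and finish with the Fadell--Neuwirth computation (which is exactly the content of Proposition~\ref{p:configurations}). Your extra paragraph justifying the completeness of the normalized gradient flow is a welcome elaboration of a step the paper leaves implicit.
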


\begin{proof}
Since
$$\lim_{(x_1,\dots,x_N)\to \partial (\Omega^N\setminus D_N)}\tilde \Phi_N(x_1,\dots,x_N)=+\infty,$$ we can take $b>0$ large enough such that
$$(\Omega^N\setminus D_N)_{\delta''}\subset (\tilde{\Phi}_N)^b,$$
where $\delta''$ is as in Lemma \ref{l:distance-3} with $M''=1$.
Then, since $(\Omega^N\setminus D_N)\setminus (\tilde{\Phi}_N)^b$ contains no critical points of $\tilde \Phi_N$ by Lemma \ref{l:distance-3}, we can apply Proposition \ref{p:P-H} to get
$$\deg( \nabla \tilde\Phi_N,\Omega^N\setminus D_N,0)=\deg( \nabla \tilde\Phi_N,(\tilde{\Phi}_N)^b,0)=\chi((\tilde{\Phi}_N)^b).$$
Since $(\Omega^N\setminus D_N)\setminus (\tilde{\Phi}_N)^b$ contains no critical points of $\tilde \Phi_N$, we can use a gradient flow to retract $(\Omega^N\setminus D_N)$ to $(\tilde{\Phi}_N)^b$. Therefore $\chi((\tilde{\Phi}_N)^b)=\chi(\Omega^N\setminus D_N),$
and we conclude with Proposition \ref{p:configurations} below.
\end{proof}

\subsection{Counting the new critical points}
By bending the Robin function we have in general created new critical points near the boundary. In this section we will show that these new critical points do not change the total degree. To this purpose, we now define the following sets (see Figure \ref{fig:strip}):
\begin{equation*}\label{eq:Omega-delta}
 \Omega_\delta:= \left\{ y \in \Omega \; : \; d(y,\de \Omega) > \delta \right\}; \quad \Sigma_{\delta'}:=\Omega\setminus \overline{\Omega}_{\delta'},
\end{equation*}
and for $0\le I \le N $
\begin{equation*}\label{eq:Theta-delta-I}
\begin{split}
& \Theta_{\delta,\delta',I}^* = \left\{ (x_1, \dots, x_N) \in \Omega^N \setminus D_N \; : \;  x_1,\dots,x_I\in {\Omega}_\delta,\; x_{I+1},\dots,x_N\in {\Sigma}_{\delta'} \right\}\\
&\qquad \quad =((\Omega_\delta)^I\setminus D_I)\times ((\Sigma_{\delta'})^{N-I}\setminus D_{N-I});\\
&\Theta_{\delta,\delta',I} = \big\{ (x_1, \dots, x_N) \in \Omega^N \setminus D_N \; : \;  (x_{\sigma(1)},\dots,x_{\sigma(N)})\in   \Theta_{\delta,\delta',I}^*\\
&\qquad \qquad \text{ for a permutation }\sigma\in S_N \big\};\\
&\Theta_{\delta,\delta',\delta'',I}^*= \Theta_{\delta,\delta',I}^*\cap (\Omega^N\setminus D_N)_{\delta''},\quad 
\Theta_{\delta,\delta',\delta'',I}= \Theta_{\delta,\delta',I}\cap (\Omega^N\setminus D_N)_{\delta''}.
\end{split}
\end{equation*}
\begin{figure}
\centering
\includegraphics[width=0.3\linewidth]{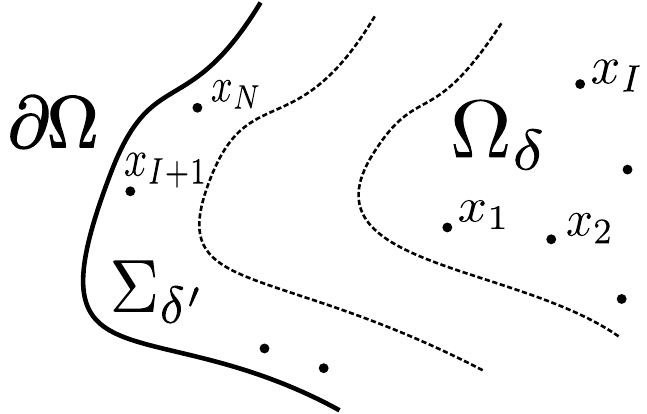}
\caption{$(x_1,\dots, x_N)\in \Theta^*_{\delta,\delta',I}=((\Omega_\delta)^I\setminus D_I)\times ((\Sigma_{\delta'})^{N-I}\setminus D_{N-I}) $}
\label{fig:strip}
\end{figure}

\noindent By Lemmata \ref{l:distance-2} and \ref{l:distance-3}, we can define for $0\le I\le N-1$
$$\deg(\nabla \tilde\Phi_N, \Theta_{\delta,\delta',I}, 0):=\deg(\nabla \tilde\Phi_N, A, 0)$$
for any open set $A$ such that
$\Theta_{\delta,\delta',\delta'',I}\subset A\subset \Theta_{\delta,\delta',I}.$
 Moreover, there are no critical points of $\tilde\Phi_N$ in $(\Omega^N \setminus D_N) \setminus \cup_{I=0}^N \Theta_{\delta,\delta',I}$ by Lemma \ref{l:distance-2}; therefore by the excision property and noting that  $\Theta_{\delta,\delta',N}=(\Omega_\delta)^N\setminus D_N$ and that $\tilde{\Phi}_N=\Phi_N$ inside $(\Omega_\delta)^N \setminus D_N$, we have
\begin{equation}\label{eq:excision}
\begin{split}
\deg(\nabla \tilde\Phi_N, \Omega^N \setminus D_N, 0)&= \deg(\nabla \Phi_N, (\Omega_\delta)^N \setminus D_N, 0) +\! 
\sum_{I=0}^{N-1}\! \deg(\nabla \tilde\Phi_N, \Theta_{\delta,\delta',I}, 0)\\
&\!\!\!\!\!\!\!=\deg(\nabla \Phi_N, (\Omega_\delta)^N \setminus D_N, 0) + 
\sum_{I=0}^{N-1}\binom{N}{I} \deg(\nabla \tilde\Phi_N, \Theta_{\delta,\delta',I}^*, 0).
\end{split}
\end{equation}

\begin{lem}\label{l:deg-in-Theta-I} Up to choosing $\delta'$ sufficiently small in Lemma \ref{l:distance-2} (and $\tilde\delta$ and $\delta''$ correspondingly), we have that  
\begin{equation*}
\begin{split}
 \deg(\n \tilde\Phi_N, \Theta_{\delta,\delta',I}^*, 0)=& \deg(\n_{X'} \Phi_I, (\Omega_\delta)^I \setminus D_I, 0)\\
 &\qquad\qquad \times \deg(\n_{X''} \tilde{\Phi}_{N-I}, (\Sigma_{\delta'})^{N-I} \setminus D_{N-I}, 0),
\end{split}
\end{equation*}
for $0\le I\le N-1$, where
$$X'=(x_1,\dots, x_I),\quad X''=(x_{I+1},\dots, x_N),\quad X=(X',X''),$$
and the degrees are well-defined intersecting the domains with $(\Omega^I\setminus D_I)_{\delta''}$ and $(\Omega^{N-I}\setminus D_{N-I})_{\delta''}$.
\end{lem}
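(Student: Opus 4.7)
The plan is to use a linear homotopy $H_s$ that switches off the Green-function coupling between the ``inner'' variables $X'=(x_1,\dots,x_I)$ and the ``outer'' variables $X''=(x_{I+1},\dots,x_N)$, and then to apply the multiplicativity of the Brouwer degree on Cartesian products. I would first choose $\delta''<\delta'<\tilde\delta/4<\tilde\delta/2<\delta$, so that by the construction of $\tau$ one has $\tilde{\M H}(x)=\M H(x,x)$ for $x\in \Omega_\delta$ and $\tilde{\M H}(x)=-\M H(x,x)$ for $x\in\Sigma_{\delta'}$. On $\Theta^*_{\delta,\delta',I}$ the functional $\tilde\Phi_N$ then decomposes as
\[
\tilde\Phi_N(X',X'')=\Phi_I(X')+\tilde\Phi_{N-I}(X'')+C(X',X''),\quad C(X',X''):=16\pi\!\!\!\!\sum_{\substack{1\le i\le I\\ I+1\le j\le N}}\!\!\!\!\M G(x_i,x_j),
\]
where $\Phi_I(X')$ is the functional of Corollary \ref{cordegPhi} on $I$ variables and $\tilde\Phi_{N-I}(X'')$ is the analogue of $\tilde\Phi_N$ on the last $N-I$ variables, equal on $\Sigma_{\delta'}^{N-I}$ to $8\pi\sum_{i\ne j\ge I+1}\M G(x_i,x_j)-4\pi\sum_{k\ge I+1}\M H(x_k,x_k)$. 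I then set $H_s:=\Phi_I(X')+\tilde\Phi_{N-I}(X'')+sC(X',X'')$ for $s\in[0,1]$, so that $H_1=\tilde\Phi_N$ and $H_0=\Phi_I\oplus \tilde\Phi_{N-I}$.

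Second, I would establish that the coupling gradient is small in $\delta'$ in the $X'$-direction and merely bounded in the $X''$-direction. Since $\M G(\cdot,x_i)$ is smooth on $\overline\Omega\setminus\{x_i\}$ for $x_i\in\Omega_\delta$ and vanishes on $\partial\Omega$, and since $|x_i-x_j|\ge\delta-\delta'$ whenever $x_i\in\Omega_\delta$ and $x_j\in\Sigma_{\delta'}$, a Taylor expansion of $\M G(x_i,\cdot)$ from $\partial\Omega$ gives
\[
|\n_{x_i}\M G(x_i,x_j)|\le C_\delta\, d(x_j,\partial\Omega)\le C_\delta\delta',\qquad |\n_{x_j}\M G(x_i,x_j)|\le C_\delta,
\]
for some $C_\delta>0$ depending only on $\delta$. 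Hence $|\n_{X'}C|=O(\delta')$ uniformly on $\Theta^*_{\delta,\delta',I}$, while $|\n_{X''}C|\le C_1(\delta)$ is merely bounded and independent of $\delta'$, $\tilde\delta$, $s$.

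Third, I would verify the admissibility of the homotopy, namely that $\n H_s\neq 0$ on $\partial(A'\times A'')$ for every $s\in[0,1]$, where $A':=((\Omega_\delta)^I\setminus D_I)\cap(\Omega^I\setminus D_I)_{\delta''}$ and $A'':=((\Sigma_{\delta'})^{N-I}\setminus D_{N-I})\cap(\Omega^{N-I}\setminus D_{N-I})_{\delta''}$, so that $\Theta^*_{\delta,\delta',\delta'',I}=A'\times A''$. On $\partial A'\times \overline{A''}$, Lemma \ref{l:distance} applied to $\Phi_I$ with a prescribed $M>C_\delta+1$ yields $\delta_I>0$ with $|\n_{X'}\Phi_I|\ge M$ outside $(\Omega^I\setminus D_I)_{\delta_I}$; since $\delta$ is set as $\min_{1\le J\le N}\delta_J$ in the proof of Lemma \ref{l:distance-2}, one has $\delta\le\delta_I$, and after reducing $\delta''\le\delta_I$ this forces $|\n_{X'}\Phi_I|\ge M$ on $\partial A'$, so $|\n_{X'}H_s|\ge M-C_\delta\delta'\ge 1$ for $\delta'$ small, uniformly in $s$. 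On $\overline{A'}\times\partial A''$ I would rerun the blow-up/rescaling argument of Lemmas \ref{l:distance-2} and \ref{l:distance-3} applied to $\tilde\Phi_{N-I}$ regarded as a functional of $X''$ only, with the bounded perturbation $s\,\n_{X''}C$ added: after rescaling by the smallest scale $r_k\to 0$, $r_k\,\n_{X''}C\to 0$, so the limiting reduced functional is unaffected and the argument still produces a critical point of either $\Xi_{\M D,\R^2}$ or $\Xi^-_{\M D,H}$, contradicting Lemmas \ref{lemmaXi2} and \ref{LemmaXiH}. Hence $|\n_{X''}H_s|\ge 1$ on $\overline{A'}\times\partial A''$ for $\delta'$, $\tilde\delta$, $\delta''$ chosen small enough, uniformly in $s$.

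Finally, by homotopy invariance of the Brouwer degree,
\[
\deg(\n\tilde\Phi_N,\Theta^*_{\delta,\delta',I},0)=\deg(\n H_0,A'\times A'',0),
\]
and since $H_0$ is a sum of functions of disjoint variables, $\n H_0=(\n_{X'}\Phi_I,\n_{X''}\tilde\Phi_{N-I})$, so the standard multiplicativity of the Brouwer degree on Cartesian products yields the claimed identity. The main difficulty I anticipate is the third step on the ``outer'' boundary $\overline{A'}\times\partial A''$: in contrast with $\partial A'\times\overline{A''}$, where the coupling is small in $\delta'$ thanks to the boundary decay of $\M G(\cdot,x_i)$, here $\n_{X''}C$ is only bounded, so non-vanishing can only be extracted by reproducing the two-case compactness analysis of Lemmas \ref{l:distance-2}--\ref{l:distance-3}, and it is crucial that the bounded extra term be absorbed into a vanishing contribution after the rescaling that produces the forbidden limit $\Xi^-_{\M D,H}$.
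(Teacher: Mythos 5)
Your proposal follows essentially the same route as the paper: on $\Theta^*_{\delta,\delta',I}$ you decompose $\tilde\Phi_N=\Phi_I(X')+\tilde\Phi_{N-I}(X'')+\Xi_{I,N}$ with $\Xi_{I,N}=16\pi\sum\mathcal{G}(x_i,x_a)$, kill the coupling by a linear homotopy, check admissibility on $\partial\Theta^*_{\delta,\delta',\delta'',I}$ via Lemmata \ref{l:distance}, \ref{l:distance-2}, \ref{l:distance-3}, and conclude by the product formula for the degree. Your bookkeeping of the coupling is in fact the right one: $\nabla_{X'}\Xi_{I,N}=O(\delta')$ by the boundary decay of $\mathcal{G}(\cdot,x_a)$, while $\nabla_{X''}\Xi_{I,N}$ is only bounded by a $\delta$-dependent (Poisson-kernel type) constant, so on the ``outer'' boundary pieces one must make the unperturbed gradient of $\tilde\Phi_{N-I}$ dominate that bound; this is exactly how the paper's estimate on $\nabla\Xi_{I,N}$ should be implemented, and it works because $\delta$ is fixed by the inner estimates alone while the constants $M',M''$ of Lemmata \ref{l:distance-2}--\ref{l:distance-3} can then be taken as large as needed, affecting only $\delta',\tilde\delta,\delta''$.

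Two points should be corrected, though neither affects the core argument. First, your parameter ordering $\delta''<\delta'<\tilde\delta/4$ is the wrong way around: the lemma (and Lemma \ref{l:distance-2}, which your third step invokes for the piece $d(x_a,\partial\Omega)=\delta'$) requires $\tilde\delta<\delta'/2$, so that at distance $\delta'$ the bending is invisible and the unmodified coercivity of Lemma \ref{l:distance} is available; with your ordering the point at distance $\delta'$ lies inside the bent region, where new critical points are expected and no coercivity is provided by the quoted lemmata. Consequently your claim that $\tilde{\mathcal{H}}=-\mathcal{H}(\cdot,\cdot)$ on all of $\Sigma_{\delta'}$ is false in the admissible regime; fortunately it is never used, so you should simply revert to the ordering $\delta''<\tilde\delta/2<\tilde\delta<\delta'/2<\delta'<\delta$ and drop that identity. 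Second, on the piece $\{d(x_a,\partial\Omega)=\delta'\}$ the bounded perturbation $s\nabla_{X''}\Xi_{I,N}$ is not absorbed because it vanishes after rescaling (there is no small scale $r_k$ there); it is absorbed because Lemma \ref{l:distance-2}, applied to $\tilde\Phi_{N-I}$ with $M'$ larger than your constant $C_1(\delta)$, makes $|\nabla_{X''}\tilde\Phi_{N-I}|$ arbitrarily large on that piece. For the pieces governed by Lemma \ref{l:distance-3} (mutual or boundary distance equal to $\delta''$) your rescaling remark is correct. With these adjustments the proof is the paper's.
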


\begin{proof}
For $X\in \Theta_{\delta,\delta',I}^*$ we have
\begin{equation}\label{yiya}
  x_i \in \Omega_\delta \quad \hbox{for }   1\le i\le I, \qquad
  x_a \in \Sigma_{\delta'} \quad \hbox{for }  I+1\le a\le N. 
\end{equation}
Interestingly, the gradient almost {\em decouples} into the first $I$ components and the last $N-I$. In fact, 
since $\tilde{\mathcal{H}}(x)$ coincides with $\mathcal{H}(x,x)$ for $x\in \Omega_\delta$, 
we have
\begin{equation*}
\tilde\Phi_N (X) = \Phi_I(X') +\tilde \Phi_{N-I}(X'')+\Xi_{I,N}(X),
\end{equation*}
where
\begin{equation*}
\begin{split}
\tilde \Phi_{N-I}(X'')&= 8\pi \sum_{\scriptstyle I+1\le a,b\le N\atop \scriptstyle a\ne b} \mathcal{G}(x_a, x_b)+4\pi \sum_{I+1\le a\le N} \tilde{\mathcal{H}}(x_a);\\
\Xi_{I,N}(X)&= 16\pi \sum_{1\le i\le I<a\le N} \mathcal{G}(x_i, x_a).
\end{split}
\end{equation*}
Notice that
$$\mathcal{G}(x_i, x_a)=o_{\delta'}(1),\quad  \nabla \mathcal{G}(x_i, x_a)=o_{\delta'}(1),$$
with $o_{\delta'}(1)\to 0$ as $\delta'\to 0$ (for $\delta$ fixed), uniformly with respect to $x_i$ and $x_a$ as in \eqref{yiya}.
Now observe that $X=(x_1,\dots, x_N)\in \partial \Theta_{\delta,\delta',\delta'',I}^*$ (with $\delta''<\tfrac{\tilde\delta}{2}$ to be fixed) implies
that one of the following holds
\begin{enumerate}
\item $|x_i-x_j|=\delta''$ for some $1\le i<j\le I$;
\item $|x_a-x_b|=\delta''$ for some $I+1\le a<b\le N$;
\item $d(x_i,\de\Omega)=\delta$ for some $1\le i\le I$;
\item $d(x_a,\de\Omega)=\delta'$ for some $I+1\le a\le N$;
\item $d(x_a,\de\Omega)=\delta''$ for some $I+1\le a\le N$.
\end{enumerate}
If we choose $M=M'=M''=1$ in Lemmata \ref{l:distance}, \ref{l:distance-2} and \ref{l:distance-3} and $\delta, \delta',\delta''$ sufficiently small, we obtain that in all the above cases
\begin{equation}\label{nPhi>1}
\max\{|\nabla_{X'} \Phi_I(X')|,|\nabla_{X''} \tilde{\Phi}_{N-I}(X'')|\}\ge 1, \quad \text{for }X\in \partial \Theta_{\delta,\delta',\delta'',I}^*.
\end{equation}
Then, by the analog of \eqref{nablaGbordo}, we can choose $\delta'$ even smaller (changing also $\tilde{\delta}$ accordingly) so that in addition to \eqref{nPhi>1} we also have
\begin{equation}\label{nXi<1}
|\nabla \Xi_{I,N}(X)|\le \frac{1}{2} \quad \text{for }X\in \partial \Theta_{\delta,\delta',\delta'',I}^*.
\end{equation}
Consider now for $t\in [0,1]$
$$\tilde\Phi_N^t (X) := \Phi_I(X') +\tilde \Phi_{N-I}(X'')+t\Xi_{I,N}(X).$$
Then \eqref{nPhi>1}-\eqref{nXi<1} imply at once
$$|\nabla \tilde \Phi_N^t(X)|\ge |(\nabla_{X'}\Phi_I(X'),\nabla_{X''}\tilde{\Phi}_{N-I}(X'')) |-|\nabla \Xi_{I,N}(X)|>0 \text{ for } X\in \partial \Theta_{\delta,\delta',\delta'',I}^*.$$
By the invariance of the degree under homotopy, we obtain 
\begin{equation}
\begin{split}
\deg(\n \tilde\Phi_N, \Theta_{\delta,\delta',\delta'',I}^*, 0) &=\deg(\n \tilde{\Phi}_{N}^0, \Theta_{\delta,\delta',\delta'',I}^*, 0)\\
&= \deg((\nabla_{X'}\Phi_I(X') , \nabla_{X''}\tilde{\Phi}_{N-I}(X'')), \Theta_{\delta,\delta',\delta'',I}^*, 0),
\end{split}
\end{equation}
and we conclude by the product formula of the degree.
\end{proof}

\begin{lem}\label{l:degtildePhi2} Up to choosing $\delta'$ sufficiently small (reducing $\tilde \delta,\delta''$ accordingly), for $1\le J\le N$ we have
$$\deg(\n \tilde{\Phi}_J, (\Sigma_{\delta'})^J \setminus D_J, 0)=0.$$
\end{lem}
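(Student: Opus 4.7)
The plan is to deduce the vanishing of this degree from the vanishing of the Euler characteristic $\chi((\Sigma_{\delta'})^J\setminus D_J)=0$, via a Poincar\'e--Hopf argument applied to a suitable modification of $\tilde\Phi_J$.

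The topological input is the following. Every connected component of $\partial\Omega$ is a smooth simple closed curve in $\R^2$, hence diffeomorphic to $S^1$ and with Euler characteristic zero. For $\delta'$ smaller than the injectivity radius of the normal exponential map, $\Sigma_{\delta'}$ is a collar neighbourhood of $\partial\Omega$ and deformation retracts onto it via the nearest-point projection, so $\chi(\Sigma_{\delta'})=\chi(\partial\Omega)=0$. Applying Proposition \ref{p:configurations} then yields
\begin{equation*}
\chi\bigl((\Sigma_{\delta'})^J\setminus D_J\bigr) = \chi(\Sigma_{\delta'})\bigl(\chi(\Sigma_{\delta'})-1\bigr)\cdots\bigl(\chi(\Sigma_{\delta'})-J+1\bigr)=0.
\end{equation*}

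To turn this into a degree computation via Proposition \ref{p:P-H} one needs a function whose sublevel sets are compactly contained in $(\Sigma_{\delta'})^J\setminus D_J$, but $\tilde\Phi_J$ does not blow up on the \emph{inner} boundary $\{d(\cdot,\partial\Omega)=\delta'\}$ of $\Sigma_{\delta'}$. I would therefore add a barrier: pick $\rho\in C^\infty([0,\delta'))$ vanishing on $[0,\tilde\delta]$, strictly increasing on $[\tilde\delta,\delta')$, and satisfying $\rho(t)\to+\infty$ as $t\to\delta'^-$, and for $K>0$ large set
\begin{equation*}
\Psi(x_1,\dots,x_J):=\tilde\Phi_J(x_1,\dots,x_J)+K\sum_{i=1}^J\rho\bigl(d(x_i,\partial\Omega)\bigr).
\end{equation*}
By Lemma \ref{l:distance-3} all critical points of $\tilde\Phi_J$ in $(\Sigma_{\delta'})^J\setminus D_J$ lie in $\{d(x_i,\partial\Omega)\le\delta''<\tilde\delta\text{ for all }i\}$, where $\rho\equiv 0$ and thus $\nabla\Psi=\nabla\tilde\Phi_J$. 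On the transition strip $\{\rho'(d(x_i,\partial\Omega))>0\text{ for some }i\}$ the added term contributes $K\rho'\nabla d$, of modulus $\gtrsim K\rho'$ and pointing into $\Omega$, which for $K$ large dominates $\nabla\tilde\Phi_J$ (uniformly bounded on that strip away from $D_J$ and from $\partial\Omega$), so $\nabla\Psi$ does not vanish there. Consequently $\Psi$ and $\tilde\Phi_J$ share exactly the same critical set. Moreover $\Psi\to+\infty$ on all of $\partial((\Sigma_{\delta'})^J\setminus D_J)$: on the diagonal via $\mathcal{G}(x_i,x_j)\to+\infty$, on $\partial\Omega$ via $\tilde{\mathcal H}=-\mathcal H(\cdot,\cdot)\to+\infty$ (cf.\ \eqref{defHtilde}), and on the inner boundary via $K\rho$. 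Applying Proposition \ref{p:P-H} to $\{\Psi<b\}$ for a large regular value $b$, and retracting $(\Sigma_{\delta'})^J\setminus D_J$ onto $\{\Psi<b\}$ by the gradient flow of $\Psi$, gives
\begin{equation*}
\deg(\nabla\Psi,\{\Psi<b\},0)=\chi(\{\Psi<b\})=\chi((\Sigma_{\delta'})^J\setminus D_J)=0.
\end{equation*}

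Finally I would connect $\tilde\Phi_J$ and $\Psi$ via the straight-line homotopy $\Psi_s:=\tilde\Phi_J+sK\sum_i\rho(d(x_i,\partial\Omega))$, $s\in[0,1]$. The dominance argument above, uniform in $s\in(0,1]$, together with Lemma \ref{l:distance-3} at $s=0$, confines the critical set of $\nabla\Psi_s$ to a fixed compact subset of $\{d(x_i,\partial\Omega)\le\delta''\}$. Homotopy invariance of the Brouwer degree then yields $\deg(\nabla\tilde\Phi_J,(\Sigma_{\delta'})^J\setminus D_J,0)=\deg(\nabla\Psi,\cdot,0)=0$. The delicate point is precisely ensuring the admissibility of this homotopy: one must bound $\nabla\tilde\Phi_J$ uniformly on the transition strip so that $K\rho'\nabla d$ overpowers it, and choose a single open neighbourhood $A$ of the critical set valid for every $s\in[0,1]$. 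The whole strategy rests on the vanishing of $\chi(\partial\Omega)$, which in dimension $2$ is automatic from the smoothness of the boundary, in line with Remark \ref{Odd}.
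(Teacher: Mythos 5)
Your overall architecture is the same as the paper's: add a barrier that blows up at the inner edge of the collar so that the modified functional tends to $+\infty$ on all of the boundary of the configuration space, apply Proposition \ref{p:P-H}, and conclude from $\chi(\Sigma_{\delta'})=0$ and Proposition \ref{p:configurations}. (The paper implements this slightly differently: it enlarges the collar to $\Sigma_{2\delta'}$ and places the barrier $\sigma(d(\cdot,\de\Omega)/\delta')$ only in $\Sigma_{2\delta'}\setminus\Sigma_{\delta'}$, so that the functional is untouched on $(\Sigma_{\delta'})^J$ and no homotopy is needed, only excision after showing there are no critical points in the annular region.) The topological half of your argument is fine.

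The genuine gap is in the analytic half, namely the claim that on the transition strip the barrier gradient $K\rho'(d(x_i,\de\Omega))\nabla d$ dominates $\nabla\tilde\Phi_J$ because the latter is ``uniformly bounded on that strip away from $D_J$''. The domain of the degree is $(\Sigma_{\delta'})^J\setminus D_J$, which contains configurations with $|x_i-x_j|$ arbitrarily small at any distance from $\de\Omega$ up to $\delta'$; there the component $16\pi\nabla\mathcal{G}(x_i,x_j)\sim -8(x_i-x_j)/|x_i-x_j|^2$ is unbounded, so no choice of $K$ gives the claimed dominance, and Lemma \ref{l:distance-3} cannot be invoked either, since it only bounds $|\nabla\tilde\Phi_J|$ from below without controlling its direction, so it does not exclude cancellation between the colliding-point singularity and the blowing-up barrier term near the inner boundary $\{d=\delta'\}$. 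Ruling out such critical points of the barrier-modified functional (and, in your version, of every member $\Psi_s$ of the homotopy), uniformly as $\delta'\to 0$, is exactly the content of the paper's Lemma \ref{l:distance-4}: a blow-up analysis at the smallest scale $r_k=\min\{d_{i,k},\hat d_{i,k},\min_{j\ne i}|x_{i,k}-x_{j,k}|\}$ with four regimes, using the Green's function asymptotics of Lemmas \ref{lemmaGk}--\ref{lemmaHk} and the nonexistence of critical points for the limiting functionals (Lemmas \ref{lemmaXi2}, \ref{lemmaXi3}, \ref{LemmaXiH}, \ref{LemmaXiHH}, the last two exploiting the half-space structure), together with Lemma \ref{l:distance-5} for the matching of the bent Robin function. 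You flag this step as ``the delicate point'' but supply no argument for it, and the uniform bound you propose to use is false on the relevant non-compact region near $D_J$; as written, the proof does not close.
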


\begin{proof} We will work on the the larger domain $\Sigma_{2\delta'}$, 
and modify $\tilde{\M{H}}$ in the strip $\Sigma_{2\delta'}\setminus \Sigma_{\delta'}=\bar{\Omega}_{\delta'}\setminus \bar{\Omega}_{2\delta'}$ by defining
$$\hat{\M{H}}(x):=\tilde{\M{H}}(x)+ \sigma\left(\frac{d(x,\de\Omega)}{\delta'}\right),$$
where $\sigma\in C^{\infty}([0,2))$ is non-negative, non-decreasing, and
$$\sigma(t)=\begin{cases}
0& \text{for }0\le t\le 1,\\
\ln\(\frac{1}{2-t}\)& \text{for }\frac{3}{2}\le t<2.
\end{cases}
$$
\begin{figure}
\centering
\includegraphics[width=0.3\linewidth]{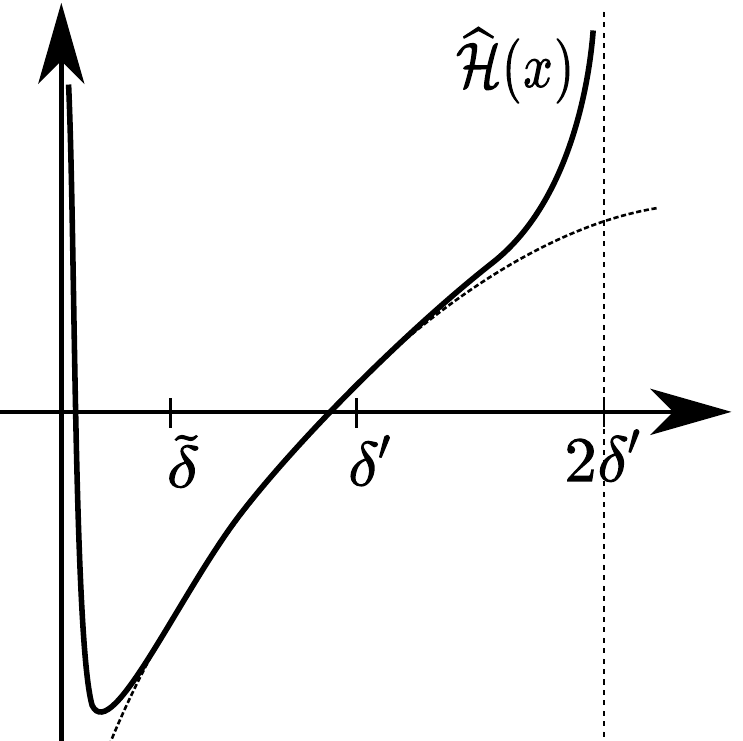}
\label{fig:strip2}
\end{figure}
Clearly $\hat{\M{H}}=\tilde{\M{H}}$ in $\Sigma_{\delta'}$, so that
\begin{equation}\label{deghatPhi0}
\deg(\n \tilde\Phi_J, (\Sigma_{\delta'})^J \setminus D_J, 0)=\deg(\n \hat{\Phi}_J, (\Sigma_{\delta'})^J \setminus D_J,0),
\end{equation}
where, for $(x_1,\dots, x_J)\in (\Sigma_{2\delta'})^J\setminus D_J$,
\begin{equation}\label{hatPhi}
\begin{split}
\hat\Phi_J(x_1,\dots, x_J)&=\tilde \Phi_J(x_1,\dots, x_J)+\sum_{1\le i\le J} \sigma\left(\frac{d(x_i,\de\Omega)}{\delta'}\right)\\
&=8\pi \sum_{\scriptstyle 1\le i,j\le J\atop \scriptstyle i\ne j}\M{G}(x_i,x_j)+4\pi\sum_{1\le i\le J}\hat{\M{H}}(x_i).
\end{split}
\end{equation}
Thanks to Lemmata \ref{l:distance-4} and \ref{l:distance-5} below, we can choose $\delta'>0$ even smaller than before, and $\tilde\delta$ accordingly small (without affecting the previous results, which are valid for $\delta'$,  $\tilde\delta$ and $\delta''$ sufficiently small), so that $\hat\Phi_J$ has no critical points in $\left((\Sigma_{2\delta'})^J\setminus(\Sigma_{\delta'})^J\right)\setminus D_J\,.$ Then
\begin{equation}\label{deghatPhi}
\deg(\n \hat{\Phi}_J, (\Sigma_{\delta'})^J \setminus D_J,0)= \deg(\n \hat{\Phi}_J, (\Sigma_{2\delta'})^J \setminus D_J,0).
\end{equation}
Since now
$$\lim_{(x_1,\dots,x_J)\to \partial ((\Sigma_{2\delta'})^J\setminus D_J)}\hat\Phi_J(x_1,\dots,x_J)=+\infty,$$
we can apply Proposition \ref{p:P-H}, and get
\begin{equation}\label{deghatPhi2}
 \deg(\n \hat\Phi_J, (\Sigma_{2\delta'})^J \setminus D_J, 0)  = \chi((\Sigma_{2\delta'})^J \setminus D_J).
\end{equation}
On the other hand, $\chi(\Sigma_{2\delta'})=0$, since $\Sigma_{2\delta'}$  retracts to a union of circles (up to choosing $\delta'$ sufficiently small) and $\chi(\mathbb{S}^1)=0$. Then it follows from 
Proposition \ref{p:configurations} 
$$\chi((\Sigma_{2\delta'})^J \setminus D_J)=0.$$
This, together with \eqref{deghatPhi0}, \eqref{deghatPhi} and \eqref{deghatPhi2} yields $\,\deg(\n \tilde\Phi_J, (\Sigma_{\delta'})^J \setminus D_J, 0)=\chi((\Sigma_{2\delta'})^J \setminus D_J)=0.$
\end{proof}

\begin{lem}\label{l:distance-4} For $1\le J\le N$ and $M'>0$ and $\delta>0$ there exists $\delta'\in (0,\delta/2)$ such that
$$|\nabla \Psi_J|\ge M' \quad \text{in }(\Sigma_{2\delta'})^J \setminus D_J,$$
where, for $x_1,\dots, x_J\in (\Omega\setminus \overline{\Omega}_{2\delta'})^J \setminus D_J$,
\begin{equation}\label{defPsi}
\begin{split}
\Psi_J(x_1,\dots, x_J)&:=\Phi_J(x_1,\dots, x_J)+\sum_{1\le i\le J} \sigma\left(\frac{d(x_i,\de\Omega)}{\delta'}\right)
\\
&=8\pi \sum_{\scriptstyle 1\le i,j\le J\atop\scriptstyle i\ne j}\M{G}(x_i,x_j)+4\pi\sum_{1\le i\le J}\M{H}(x_i)+\sum_{1\le i\le J} \sigma\left(\frac{d(x_i,\de\Omega)}{\delta'}\right),
\end{split}
\end{equation}
and $\M{G}$ and $\M{H}$ are still the Green's function of $\Delta$ in $\Omega$ (not in $\Omega\setminus \overline{\Omega}_{2\delta'}$) and its regular part.
\end{lem}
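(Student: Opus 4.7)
I will argue by contradiction: assume there exist sequences $\delta'_k \to 0^+$ and $X_k = (x_{1,k},\dots,x_{J,k}) \in (\Sigma_{2\delta'_k})^J \setminus D_J$ with $|\nabla \Psi_J(X_k)| \le M'$. Adapting the rescaling from Lemma \ref{l:distance} to the thin-strip geometry, I set $d_{i,k} := d(x_{i,k},\partial\Omega)$ and
\[
r_{i,k} := \min\bigl\{d_{i,k},\; 2\delta'_k - d_{i,k},\; \min_{j\ne i}|x_{i,k} - x_{j,k}|\bigr\},
\]
the new feature being that the inner boundary $\{d=2\delta'_k\}$ also contributes a scale. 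Relabel so that $r_k := r_{1,k}$ is the overall minimum (hence $r_k \le \delta'_k \to 0$) and rescale around $x_k := x_{1,k}$ by $r_k$: $z_{i,k} := (x_{i,k}-x_k)/r_k$, $\Omega_k := r_k^{-1}(\Omega-x_k)$. Up to subsequences I set $\M{D} := \{i : |x_{i,k}-x_k| = O(r_k)\}$ (which contains $1$), $z_{i,\infty} := \lim z_{i,k}$ for $i\in \M{D}$, extract $L_1 := \lim d_k/r_k \in [1,\infty]$ and $L_2 := \lim (2\delta'_k - d_k)/r_k \in [1,\infty]$, and denote by $\nu_1$ the inward unit normal to $\partial\Omega$ at the boundary projection of the limit of $x_k$.

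The scaling of $\M{G}$ and $\M{H}$ mirrors the proof of Lemma \ref{l:distance}: the limit of $r_k\nabla_{x_i}\Phi_J$ gives a Coulombic $\R^2$-system when $L_1 = \infty$, and the system associated with $\M{G}_{H_1}$, $\M{H}_{H_1}$ on the half-space $H_1 := \{z\cdot\nu_1 > -L_1\}$ when $L_1 < \infty$. The novelty here is the $\sigma$-gradient. Using $\sigma'\ge 0$ together with $\sigma'(t) = 1/(2-t)$ for $t$ near $2$, and the expansion $d(x_k+r_k z,\partial\Omega) = d_k + r_k z\cdot\nu_1 + O(r_k^2)$, a direct computation yields that, for each $i\in\M{D}$, the scaled $\sigma$-gradient $r_k\nabla_{x_i}[\sigma(d(x_{i,k},\partial\Omega)/\delta'_k)]$ converges to a vector in the $\nu_1$-direction with non-negative coefficient; this coefficient vanishes when $L_2 = \infty$, and equals $1/(L_2 - z_{i,\infty}\cdot\nu_1)>0$ in the regime $L_1 = \infty$, $L_2 < \infty$ (in which $d_{i,k}/\delta'_k \to 2$ and the explicit formula for $\sigma'$ applies).

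If $L_2 = \infty$ the $\sigma$-contribution drops out and the limit system is exactly the one already ruled out in Lemma \ref{l:distance}, yielding contradictions via Lemma \ref{lemmaXi2} (when $L_1 = \infty$; in this subcase $|\M{D}|\ge 2$ since $r_k$ is then realized by a pair-distance) or Lemma \ref{LemmaXiH} (when $L_1 < \infty$). If $L_2 < \infty$, I argue by projection at the minimum: pick $i_0 \in \M{D}$ minimizing $z_{i,\infty}\cdot\nu_1$ and project the limit gradient equation at $i_0$ onto $\nu_1$. Every summand has a non-negative $\nu_1$-component: the Coulombic cross-terms by minimality of $i_0$; the $\M{H}_{H_1}$ cross-terms (when $L_1 < \infty$), because the reflection $\bar z_j$ of $z_j$ across $\partial H_1$ satisfies $(z_{i_0,\infty}-\bar z_j)\cdot\nu_1 = (z_{i_0,\infty}\cdot\nu_1+L_1)+(z_j\cdot\nu_1+L_1) > 0$; and the $\sigma$-term since $\sigma'\ge 0$. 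The strict positivity of the $\nu_1$-projection then comes either from the Robin contribution $4\pi\nabla R_{H_1}(z_{i_0,\infty})\cdot\nu_1 = 2/(z_{i_0,\infty}\cdot\nu_1+L_1)>0$ when $L_1<\infty$, or from the strictly positive $\sigma$-term $1/(L_2 - z_{i_0,\infty}\cdot\nu_1)>0$ when $L_1=\infty$ (covering in particular the degenerate case $|\M{D}|=1$). Either way, the limit gradient at $i_0$ cannot vanish, contradicting criticality.

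The main obstacle is the delicate rate-matching among $r_k$, $\delta'_k$ and $2\delta'_k - d_{i,k}$ needed to identify the limit of the $\sigma$-gradient and to ensure the strict positivity used above. Once this is in place, the minimum-index projection provides a clean one-sided estimate that closes the argument; it also explains a posteriori the design choice of $\sigma$ being non-decreasing in $d$, so that the force it generates is inward and \emph{aligned with} (rather than opposed to) the Robin gradient near $\partial\Omega$.
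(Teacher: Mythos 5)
Your proposal is correct and follows essentially the same route as the paper: the same contradiction-and-rescaling scheme with the extra scale $\hat d_{i,k}=2\delta'_k-d_{i,k}$ entering $r_{i,k}$, and the same case analysis according to whether $d_k/r_k$ and $\hat d_k/r_k$ stay bounded (your four $(L_1,L_2)$ regimes are exactly the paper's Cases 1--4), with the regimes where the $\sigma$-term disappears handled, as in the paper, by Lemmata \ref{lemmaXi2} and \ref{LemmaXiH}. The only (minor) difference is in the regimes $L_2<\infty$: the paper passes to the limiting functionals and invokes the appendix Lemmata \ref{lemmaXi3} and \ref{LemmaXiHH}, whereas you rule out the limiting criticality inline by projecting the gradient at the point minimizing $z\cdot\nu_1$ onto $\nu_1$ — which is precisely the extremal-point monotonicity mechanism underlying those lemmata (and your unified choice of the point closest to the rescaled $\partial\Omega$ in fact streamlines the vertical/horizontal case split used in the proof of Lemma \ref{LemmaXiH}).
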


\begin{proof}
We assume by contradiction that there exist sequences $\delta'_k\to 0^+$ and
$$(x_{1,k},\dots, x_{J,k})\in (\Sigma_{2\delta_k'})^J\setminus D_J$$
such that
\begin{equation}\label{nablaPsiJ}
\nabla \Psi_J^{k}(x_{1,k},\dots, x_{J,k})=O(1)\quad \text{as }k\to\infty,
\end{equation}
where the notation $\Psi_J^{k}$ is meant to emphasize that $\Psi_J$ depends on $\delta_k'$, due to the last term in \eqref{defPsi}. We proceed as in the proof of Lemma \ref{l:distance}.
Set
$$r_{i,k}=\min\left\{d_{i,k},\hat d_{i,k}, \min_{j\ne i}|x_{i,k}-x_{j,k}|\right\},$$
where
$$d_{i,k}:=d(x_{i,k},\de\Omega), \quad  \hat d_{i,k}:=d(x_{i,k},\de\Omega_{2\delta_k'})=2\delta_k'-d(x_{i,k},\de\Omega).$$
Up to reordering we assume $r_{1,k}\le r_{i,k}$ for every $1\le i\le J$, and set
$$r_k:= r_{1,k}, \quad d_k:=d_{1,k}, \quad \hat d_k:= \hat d_{1,k}, \quad x_k:= x_{1,k}.$$
Define $\M{D}$, $Z_k$, $Z'_k\to Z'_\infty$, $\Omega_k$, as in the proof of Lemma \ref{l:distance}, and similarly
$$(\Sigma_{2\delta_k'})_k:=r_k^{-1}(\Sigma_{2\delta_k'}-x_k).$$
Then we set, for $Z=(z_1,\dots,z_J)\in ((\Sigma_{2\delta_k'})_k)^J\setminus D_J$,
\begin{equation}\label{reducedsigma}
\begin{split}
\hat \Xi_{\M{D},k}(z_{1},\dots,z_{N}):=&8\pi \sum_{\scriptstyle i,j\in \M{D} \atop \scriptstyle i\ne j} \mathcal{G}_{k}(z_{i},z_{j})+ 16\pi\sum_{i\in \M{D}} \sum_{\scriptstyle 1\le j\le N \atop \scriptstyle j\not\in\M{D}} \mathcal{G}_{k}(z_{i},z_{j})\\
&+4 \pi \sum_{i\in \M{D}} \mathcal{H}_{k}(z_{i},z_{i}) +\sum_{i\in \M{D}}\sigma \(\frac{r_k d(z_{i},\partial \Omega_k)}{\delta'_k}\),
\end{split}
\end{equation}
where $\M{G}_k$ and $\M{H}_k$ are the Green's function of $\Delta$ on $\Omega_k$ and its regular part, respectively. Observe that $r_k\le \delta_k'\to 0$ as $k\to\infty$. We consider several cases.

\medskip

\noindent\textbf{Case 1} Assume that, up to a subsequence, $r_k=o(d_k)$, $r_k=o(\hat d_k)$ as $k\to\infty$. 
Since
$$\nabla_z \sigma \(\frac{r_k d(z,\partial \Omega_k)}{\delta'_k}\)= \nabla_z \sigma \(\frac{d(x_k+r_k z,\partial \Omega)}{\delta'_k}\)=r_k \nabla_x \sigma \(\frac{d(x,\partial \Omega)}{\delta'_k}\)\bigg|_{x=x_k+r_kz},$$
and
$$\sigma'\(\frac{d(x_{i,k},\partial \Omega)}{\delta'_k}\)=O\(\frac{\delta_k'}{\hat d_k}\),\quad |\nabla d(\cdot, \partial\Omega)|=1,\quad \text{for }i\in \M{D}$$
we infer
$$\nabla_z \sigma \(\frac{r_k d(z_{i,k},\partial \Omega_k)}{\delta'_k}\)=O\(\frac{r_k}{\hat d_k}\)=o(1)\quad \text{as }k\to\infty,\quad \text{for }i\in \M{D}.$$
Then we can proceed as in Case 1 of the proof of Lemma \ref{l:distance} to get that $Z_\infty'$ is critical point of $\Xi_{\M{D},\R^2}$, as defined in \eqref{XiJR2}, contradicting Lemma \ref{lemmaXi2}.

\medskip

\noindent\textbf{Case 2} Assume that, up to a subsequence, $r_k=o(\hat d_k)$, $\frac{d_k}{r_k}\to L\ge 1$ as $k\to\infty$. This implies that $r_k=o(\delta_k')$ as $k\to\infty$ and, up to a rotation, $(\Sigma_{\delta_k'})_k$ converges to a half-space $H=\{(x,y)\in \R^2:y<L\}$. We proceed as in Case 1, except that now 
$$\sigma \(\frac{r_k d(z_{i,k},\partial \Omega_k)}{\delta'_k}\)=0,\quad \text{for }i\in \M{D}, \quad k\text{ large enough}.$$
Then $\hat\Xi_{\M{D},k}(Z_k)=\Xi_{\M{D},k}(Z_k)$ for $k$ large enough, where $\Xi_{\M{D},k}$ is as in \eqref{XiDk} (with $t_k=0$). Then we are in the same situation as Case 2 of the proof of Lemma \ref{l:distance}, leading to the same contradiction, i.e.  $Z_\infty'$ is a critical point of $\Xi_{\M{D},H}$, violating Lemma \ref{LemmaXiH}.

\medskip

\noindent\textbf{Case 3} Assume that, up to a subsequence, $r_k=o(d_k)$, $\frac{\hat d_k}{r_k}\to \hat L_1\ge 1$. This implies in particular that, up to a subsequence,
\begin{equation}\label{rkhatL}
r_k=o(\delta_k'),\quad  \frac{\hat d_{i,k}}{r_k}\to \hat L_i\ge 1\quad \text{as }k\to\infty, \text{ for }i\in \M{D}.
\end{equation}
Then, up to a rotation
$(\Sigma_{2\delta_k'})_k$ converges to $\hat H=\{(x,y)\in \R^2:y>-\hat L\}$.
As before, $r_k=o(d_k)$ implies $r_k=o(d_{i,k})$ for $i\in\M{D}$ and it follows $\nabla \M{H}_k(z_{i,k},z_{i,k})=O(r_k)$ as $k\to\infty$ for $i\in \M{D}$. Considering that
$$\frac{r_k d(z_{i,k},\partial \Omega_k)}{\delta_k'}= \frac{ d(x_{i,k},\partial \Omega)}{\delta_k'}\to 2,\quad \text{as }k\to\infty, \quad \text{for }i\in \M{D},$$
and $\sigma(t)=-\ln(2-t)$ for $\frac{3}{2}\le t<2$, we have that
\begin{equation}
\begin{split}
\nabla_z\sigma\(\frac{r_k d(z_{i,k},\de \Omega_k)}{\delta_k'}\)&
=\frac{r_k\nabla d(z_{i,k},\partial \Omega_k)}{2\delta_k'-r_k d(z_{i,k},\de \Omega_k)}=\frac{-r_k\nabla d(z_{i,k},\partial (\Omega\setminus\Omega_{2\delta_k'})_k)}{\hat d_{i,k}}\\
&\to -\frac{\nabla_z d(z_{i,\infty},\partial \hat H)}{\hat L_i}= \nabla_z \ln\frac{1}{d(z_{i,\infty},\de\hat H)},
\end{split}
\end{equation}
since $\hat L_i=d(z_{i,\infty},\de\hat H).$
Then $Z_\infty'$ is a critical point of 
$$\hat\Xi_{\M{D},\hat H}(Z)= 4\sum_{\scriptstyle i,j\in\M{D}\atop \scriptstyle  i\ne j} \ln\frac{1}{|z_i-z_j|}+\sum_{i\in\M{D}}\ln\frac{1}{d(z_i, \de\hat H)},$$
contradicting Lemma \ref{lemmaXi3}.

\medskip

\noindent\textbf{Case 4} If we are not in any of the above cases, then, up to a subsequence,
$$\frac{d_k}{r_k}\to L_1\ge 1,\quad \frac{\hat d_k}{r_k} \to \hat L_1\ge 1, \quad \text{as }k\to\infty.$$
Then,
$$\frac{r_k}{\delta_k'}\to \gamma>0,\quad \text{as }k\to\infty,$$
{and},
$$\frac{d_{i,k}}{r_k}\to L_i\ge 1,\quad \frac{\hat d_{i,k}}{r_k} \to \hat L_i\ge 1, \quad \frac{\hat d_{i,k}}{\delta_k'}\to \rho_i\in (0,2),\quad \text{as }k\to\infty, \quad i\in\M{D}.$$
Up to a rotation $(\Sigma_{2\delta_k'})_k$ converges to
$H\cap \hat H$,
where
$$H= \{(x,y)\in \R^2: y<L_1\}, \quad \hat H= \{(x,y)\in \R^2: y>-\hat L_1\}.$$
Considering now that
$$L_i+o(1)=\frac{d_{i,k}}{r_k}=d(z_{i,k},\de\Omega_k)=d(z_{i,\infty},\de H)+o(1),\quad \text{as }k\to\infty, \quad\text{for }i\in\M{D},$$
we compute
\begin{equation*}
\begin{split}
\nabla_z\sigma \(\frac{r_k d(z_{i,k},\partial \Omega_k)}{\delta'_k}\)&
=\frac{r_k}{\delta'_k}\sigma'\(\frac{d_{i,k}}{\delta'_k}\)\nabla_z d(z_{i,k},\partial \Omega_k)\\
&\to \gamma\sigma'(\gamma L_i)\nabla_z d(z_{i,\infty},\de H)\\
&=\nabla_z \sigma(\gamma d(z_{i,\infty},\de H)),\quad \text{as }k\to\infty,\quad \text{for }i\in \M{D}.
\end{split}
\end{equation*}
Then $Z'_\infty$ is a critical point of
$$\hat\Xi_{\M{D},\hat H}(Z')= 8\pi\sum_{\scriptstyle i,j\in\M{D}\atop \scriptstyle i\ne j} \M{G}_H(z_i,z_j)+4\pi \sum_{i\in\M{D}}\M{H}_H(z_i)+ \sum_{i\in\M{D}}\sigma(\gamma d(z_i,H)).$$
This contradicts Lemma \ref{LemmaXiHH}.

\medskip

\noindent These contradictions, arising from \eqref{nablaPsiJ}, complete the proof of the lemma.
\end{proof}

\begin{lem}\label{l:distance-5} Given $1\le J\le N$ and $\delta'$ as in Lemma \ref{l:distance-4} (or smaller), there exists $\tilde \delta\in (0,\tfrac{\delta'}{2})$ sufficiently small such that $\hat\Phi_J$ (defined as in \eqref{hatPhi}, hence depending on $\tilde\delta$) has no critical points in $((\Sigma_{2\delta'})^J\setminus (\Sigma_{\delta'})^J)\setminus D_J.$
\end{lem}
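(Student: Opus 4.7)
The proof proceeds by contradiction and is essentially a rerun of the four-case rescaling exhaustion from the proof of Lemma~\ref{l:distance-4}, with extra bookkeeping for the $\tilde\delta$-bending of the Robin function. Suppose there exist sequences $\tilde\delta_k\to 0^+$ and critical points $X_k=(x_{1,k},\ldots,x_{J,k})\in ((\Sigma_{2\delta'})^J\setminus(\Sigma_{\delta'})^J)\setminus D_J$ of $\hat\Phi_{J,k}$ (the dependence on $\tilde\delta_k$ enters through $\tilde{\mathcal H}$). Up to relabeling, $d(x_{1,k},\partial\Omega)\in[\delta',2\delta')$; in particular $d(x_{1,k},\partial\Omega)\gg\tilde\delta_k$ and so $\tilde{\mathcal H}(x_{1,k})=\mathcal H(x_{1,k},x_{1,k})$ for every large $k$, i.e.\ the bending is invisible at the designated ``far'' index. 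Introduce $r_{i,k}$, $r_k$ and the base point $x_k$ exactly as in the proof of Lemma~\ref{l:distance-4}.

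If $\liminf_k r_k>0$, pass to a subsequence so that $X_k\to X_\infty\in(\Sigma_{2\delta'})^J\setminus D_J$ with every $d(x_{i,\infty},\partial\Omega)\ge c>0$. For $k$ large, $d(x_{i,k},\partial\Omega)>\tilde\delta_k/2$ for every $i$, hence $\tilde{\mathcal H}(x_{i,k})=\mathcal H(x_{i,k},x_{i,k})$ and, in a neighborhood of $X_k$, $\hat\Phi_{J,k}$ coincides with the functional $\Psi_J$ of Lemma~\ref{l:distance-4} up to replacing the coefficient in front of $\sigma$ by $4\pi$ --- harmless, since the proof of Lemma~\ref{l:distance-4} works verbatim for any positive coefficient on $\sigma$. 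Lemma~\ref{l:distance-4} then yields $|\nabla\hat\Phi_{J,k}(X_k)|\ge M'>0$, contradicting criticality.

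If instead $r_k\to 0$, rescale around $x_k$ at scale $r_k$ and form a reduced functional $\hat\Xi_{\mathcal D,k}$ in analogy with \eqref{reducedsigma}, now carrying a $\tilde{\mathcal H}$-term. A direct computation gives
\begin{equation*}
r_k\nabla_z\tilde{\mathcal H}(x_k+r_kz)=r_k\,\tau\!\left(\frac{d}{\tilde\delta_k}\right)\nabla[\mathcal H(\cdot,\cdot)](x_k+r_kz)+\frac{r_k}{\tilde\delta_k}\,\tau'\!\left(\frac{d}{\tilde\delta_k}\right)\mathcal H(x,x)\,\nabla d(\cdot,\partial\Omega),
\end{equation*}
evaluated at $x=x_k+r_kz$. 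For indices $i\in\mathcal D$ with $d(x_{i,k},\partial\Omega)/\tilde\delta_k\to\infty$, $\tilde{\mathcal H}\equiv\mathcal H(\cdot,\cdot)$ on a neighborhood of $x_{i,k}$ and the analysis reduces verbatim to one of Cases~1--4 of Lemma~\ref{l:distance-4}. For indices with $d(x_{i,k},\partial\Omega)=O(\tilde\delta_k)$, the inequality $r_k\le d(x_{i,k},\partial\Omega)=O(\tilde\delta_k)$ together with $\mathcal H(x,x)=O(|\log d(x,\partial\Omega)|)$ shows that the displayed gradient is $O(r_k|\log\tilde\delta_k|/\tilde\delta_k)$; adapting the blow-up step and passing to the limit yields a critical point of a limit functional of the same form as in Cases~1--4, possibly with a sign flip on the Robin summand (reflecting whether the rescaled limit falls in the region $\{\tau=1\}$ or $\{\tau=-1\}$). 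In every subcase, the nonexistence of such critical points is guaranteed by Lemmas~\ref{lemmaXi2}, \ref{LemmaXiH}, \ref{lemmaXi3} and \ref{LemmaXiHH}, whose proofs apply equally to either sign of the Robin term, yielding the desired contradiction.

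The main obstacle is precisely this last bookkeeping: one must verify that the rescaled bending term does not produce, in the limit, a functional whose critical points are no longer forbidden by Lemmas~\ref{lemmaXi2}--\ref{LemmaXiHH}. The logarithmic growth of $\mathcal H(x,x)$ near $\partial\Omega$, together with the fact that $\tilde\delta_k$ may be taken arbitrarily small after $\delta'$ is fixed, ensures that the perturbation is controlled and that the sign-flipped limit functionals still fall within the forbidden class.
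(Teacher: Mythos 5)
Your Case A is fine, but Case B contains a genuine gap, and it arises precisely where you locate the ``main obstacle''. For indices with $d(x_{i,k},\partial\Omega)=O(\tilde\delta_k)$ your bound on the rescaled gradient of the bending term, $O(r_k|\log\tilde\delta_k|/\tilde\delta_k)$, is not $o(1)$: since one only knows $r_k\le d_{i,k}$, this quantity can blow up (e.g.\ $r_k\sim\tilde\delta_k$) or converge to a nonzero constant (e.g.\ $r_k\sim\tilde\delta_k/|\log\tilde\delta_k|$, which occurs for a cluster of near-boundary points at mutual distance $o(\tilde\delta_k)$ lying in the transition zone $d\in(\tilde\delta_k/4,\tilde\delta_k/2)$ where $\tau'\neq 0$ and $\mathcal{H}(x,x)\sim\frac{1}{2\pi}\log\tilde\delta_k$). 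In that regime the blow-up limit is \emph{not} ``a functional of the same form as in Cases 1--4, possibly with a sign flip'': one gets $\nabla\Xi_{\M{D},\R^2}$ (or $\nabla\Xi^{\pm}_{\M{D},H}$) perturbed by a constant forcing field along the normal direction, and Lemmata \ref{lemmaXi2}--\ref{LemmaXiHH} do not exclude zeros of such a perturbed field (a nonzero constant can certainly be attained by $\nabla\Xi_{\M{D},\R^2}$). Ruling this out would require an extra argument (e.g.\ a sign/monotonicity argument in the normal variable, or showing the $\tau'$-term dominates), which you do not provide; moreover the ``sign-flipped'' versions of Lemmata \ref{lemmaXi3} and \ref{LemmaXiHH} that you invoke are not stated in the paper, even if they are easy.

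The paper's proof avoids all of this and requires no rescaling: it mimics the proof of Lemma \ref{l:distance-2}, with Lemma \ref{l:distance-4} in place of Lemma \ref{l:distance}. Since every configuration in $((\Sigma_{2\delta'})^J\setminus(\Sigma_{\delta'})^J)\setminus D_J$ has at least one point with $d(x_i,\partial\Omega)\ge\delta'$, one splits (along a contradicting sequence $\tilde\delta_k\to 0$) the indices into a nonempty block $X'$ staying at positive distance from $\partial\Omega$ and a block collapsing onto $\partial\Omega$; for $k$ large $\hat\Phi_{J,k}$ coincides, in the $X'$-variables, with $\Psi_I$ from Lemma \ref{l:distance-4} up to cross terms whose gradients are $o(1)$ as in \eqref{nablaGbordo}, and Lemma \ref{l:distance-4} applied to the far block gives $|\nabla_{X'}\hat\Phi_{J,k}|\ge M'-o(1)>0$, contradicting criticality outright. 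Your proposal never exploits the far point in the noncompact case, which is exactly what forces you into the problematic transition-scale analysis; as written, the argument does not close.
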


\begin{proof} This follows as in the proof of Lemma \ref{l:distance-2} by a contradiction argument, using Lemma \ref{l:distance-4} instead of Lemma \ref{l:distance}. Here $2\delta'$ plays the role of $\delta$ in Lemma \ref{l:distance-4}.
\end{proof}

\subsection{Proof of Proposition \ref{p:degree} (completed)} 

By Corollary \ref{ControlLambda1} we can assume $h\equiv 1$ and work with $\Phi_N$ instead of $\Phi_{N,h}$. Using Lemma \ref{l:distance}, considering that $\Phi_N$ coincides with $\tilde\Phi_N$ on $(\Omega_\delta)^N \setminus D_N$, using formula \eqref{eq:excision} and  Lemmata \ref{l:deg-in-Theta-I} and \ref{l:degtildePhi2} we now get
\begin{equation*}
\begin{split}
\deg(\n \Phi_N, \Omega^N \setminus D_N, 0) &= \deg(\n \Phi_N, (\Omega_\delta)^N \setminus D_N, 0)\\
 & = \deg(\n \tilde\Phi_N, (\Omega_\delta)^N \setminus D_N, 0) \\
& = \deg(\n \tilde\Phi_N, \Omega^N \setminus D_N, 0) -
   \sum_{0\le I\le N-1} \deg(\n \tilde\Phi_N, \Theta_{\delta,\delta',I}, 0)\\
&   = \deg(\n \tilde\Phi_N, \Omega^N \setminus D_N, 0).
\end{split}
\end{equation*}
By Lemma \ref{l:degtildePhi} we finally obtain
$$\deg(\n \Phi_N, \Omega^N \setminus D_N, 0) = \chi (\Omega) (\chi (\Omega) -1) ... (\chi (\Omega) -N + 1).$$
\hfill$\square$

\appendix
\section{Properties of the Green's function}\label{a:green}

For some fixed $L,\hat L>0$, let $H$ and $\hat H$ denote the half spaces
\begin{equation}\label{defHH}
H=\{(x,y)\in \R^2:y<L\},\quad \hat H=\{(x,y)\in \R^2:y>-\hat L\}\,.
\end{equation}
On the half-space $H$, the Green's function and its regular part are given explicitly by
\begin{equation}\label{GreenH}
\mathcal{G}_H( z,\eta)=\frac{1}{2\pi}\ln\frac{| z-\eta^*|}{| z-\eta|},\quad \mathcal{H}_H( z,\eta)=\frac{1}{2\pi}\ln| z-\eta^*|,
\end{equation}
where $\eta^*$ is the reflection of $\eta$ across $\de H$.

Consider now $\Omega\subset\R^2$ smoothly bounded. For $x_k\in \Omega$, $d_k:=d(x_k,\partial \Omega)$, $r_k>0$, let $\M{G}_k(z,\eta)$ and $\mathcal{H}_k( z,\eta)$ denote the Green's function of $\Delta$ in $\Omega_k:= r_k^{-1}(\Omega-x_k)$ and its regular part.

\begin{lem}\label{lemmaGk} With the above notation, assume that, up to a subsequence, $r_k=o(d_k)$, so that $\Omega_k$ invades $\R^2$ as $k\to\infty$. Then, for every compact set $K\subset\R^2$, there exists $C_K$ such that
$$\left|\nabla_z\(\M{G}_k(z,\eta)-\frac{1}{2\pi}\ln\frac{1}{|z-\eta|}\)\right|\le C_K\frac{r_k}{d_k}$$
and
$$|\nabla_z \M{H}_k(z,z)|\le C_K\frac{r_k}{d_k},$$
uniformly for $z\in K$, $\eta\in \Omega_k$.
\end{lem}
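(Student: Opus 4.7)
The plan is to combine two elementary ingredients: the scaling covariance of the planar Green's function, and a one-line maximum-principle bound on its regular part, applied on the unscaled domain $\Omega$.

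\emph{Step 1: scaling.} Under the change of variables $x = x_k + r_k z$ one has $\Delta_x = r_k^{-2}\Delta_z$ and the Dirac mass rescales so that $z\mapsto \mathcal{G}(x_k+r_k z,\, x_k+r_k \eta)$ solves $\Delta_z u = \delta_\eta$ in $\Omega_k$ with zero Dirichlet data. Therefore
$$\mathcal{G}_k(z,\eta) \,=\, \mathcal{G}(x_k+r_k z,\, x_k+r_k\eta).$$
Setting $x:=x_k+r_k z$, $y:=x_k+r_k\eta$, subtracting the logarithmic singularity on both sides and using $|x-y|=r_k|z-\eta|$, one obtains
$$\mathcal{H}_k(z,\eta) \,:=\, \mathcal{G}_k(z,\eta) - \frac{1}{2\pi}\ln\frac{1}{|z-\eta|} \,=\, \mathcal{H}(x,y) - \frac{1}{2\pi}\ln r_k,$$
The additive constant disappears under $\nabla_z$, leaving
$$\nabla_z \mathcal{H}_k(z,\eta) \,=\, r_k\,(\nabla_x\mathcal{H})(x,y),$$
where $\nabla_x$ is the gradient in the first slot of $\mathcal{H}$.

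\emph{Step 2: maximum-principle bound on $\Omega$.} For fixed $x\in\Omega$, the map $y\mapsto \mathcal{H}(x,y)$ is the harmonic extension to $\Omega$ of its boundary data $\tfrac{1}{2\pi}\ln|x-\cdot|$. Differentiating in $x$ preserves harmonicity in $y$, and the boundary values of $y\mapsto \nabla_x\mathcal{H}(x,y)$ are exactly $\nabla_x\bigl[\tfrac{1}{2\pi}\ln|x-y|\bigr]$, of modulus $\tfrac{1}{2\pi|x-y|}\le \tfrac{1}{2\pi\, d(x,\partial\Omega)}$ for $y\in\partial\Omega$. The maximum principle then yields
$$|\nabla_x\mathcal{H}(x,y)| \,\le\, \frac{C}{d(x,\partial\Omega)} \qquad \text{for all } y\in\Omega.$$

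\emph{Step 3: conclusion, and main obstacle.} For $z$ in a compact set $K\subset\mathbb{R}^2$, the hypothesis $r_k = o(d_k)$ guarantees $d(x_k+r_k z,\partial\Omega)\ge d_k/2$ for $k$ large, uniformly in $z\in K$. Combining the two previous displays gives
$$\Bigl|\nabla_z\Bigl(\mathcal{G}_k(z,\eta) - \frac{1}{2\pi}\ln\frac{1}{|z-\eta|}\Bigr)\Bigr| \,=\, r_k\,|\nabla_x\mathcal{H}(x,y)| \,\le\, \frac{C_K\, r_k}{d_k},$$
uniformly in $\eta\in\Omega_k$, which is the first estimate. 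The second then follows from the symmetry $\mathcal{H}_k(z,\eta)=\mathcal{H}_k(\eta,z)$ and the chain rule: $\nabla_z[\mathcal{H}_k(z,z)] = 2\,(\nabla_1\mathcal{H}_k)(z,z)$, which inherits the same bound. There is no real obstacle here; once the scaling identity is written, the whole estimate reduces to a textbook maximum-principle argument on the original domain $\Omega$. The only implicit point is the joint smoothness of $\mathcal{H}$ on $\Omega\times\Omega$ (up to the diagonal) needed to differentiate under the harmonic equation, which is standard for smooth planar domains.
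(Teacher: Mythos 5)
Your proof is correct, but it takes a different route from the paper: the paper does not prove this lemma internally at all, it simply invokes Appendix B of Druet--Thizy \cite{DruThiI}, whereas you give a short self-contained argument. Your two ingredients are sound. The scaling identity $\mathcal{G}_k(z,\eta)=\mathcal{G}(x_k+r_kz,x_k+r_k\eta)$ is exactly right in two dimensions (the Dirac mass and the $r_k^{-2}$ from $\Delta_z$ cancel), and the additive constant $-\tfrac{1}{2\pi}\ln r_k$ in $\mathcal{H}_k$ indeed drops out under $\nabla_z$, so everything reduces to the bound $|\nabla_x\mathcal{H}(x,y)|\le C/d(x,\partial\Omega)$ on the fixed domain $\Omega$. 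The one point you flag as ``implicit'' -- that $y\mapsto\nabla_x\mathcal{H}(x,y)$ is harmonic with boundary values $\nabla_x\bigl[\tfrac{1}{2\pi}\ln|x-y|\bigr]$ -- is most cleanly closed by writing $\mathcal{H}(x,y)=\int_{\partial\Omega}P(y,\xi)\,\tfrac{1}{2\pi}\ln|x-\xi|\,d\sigma(\xi)$ with $P\ge 0$ the Poisson kernel of $\Omega$ and differentiating under the integral (legitimate since $|x-\xi|\ge d(x,\partial\Omega)>0$); this gives the maximum-principle bound at once, uniformly in $y\in\Omega$ including $y=x$, which is what you need for the diagonal estimate. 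Your treatment of the second estimate via symmetry, $\nabla_z[\mathcal{H}_k(z,z)]=2(\nabla_1\mathcal{H}_k)(z,z)$, costs only a harmless factor $2$ and is consistent with how the paper uses the bound. What your approach buys is self-containedness and transparency of where the factor $r_k/d_k$ comes from (pure scaling plus the distance of $x_k+r_kz$ to $\partial\Omega$ being comparable to $d_k$); what the citation buys the authors is brevity and uniform treatment alongside the half-plane estimates of Lemma \ref{lemmaHk}.
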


\begin{proof}
This follows from Appendix B in \cite{DruThiI}.
\end{proof}

\begin{lem}\label{lemmaHk}
With the above notation, assume that, up to a subsequence and a rotation, $r_k\to 0$, $d_k/r_k\to L\in (0,\infty)$, and $\Omega_k$ converges to the half-space $H$ as $k\to \infty$. Then for every compact set $K\subset H$ and for $k$ large enough
$$\|\nabla(\M{H}_k-\M{H}_H)\|_{L^\infty(K\times \Omega_{k,\rho})}\le C_K r_k\max\{1,\rho^{-1}\},$$
and
$$\|\nabla(\M{G}_k-\M{G}_H)\|_{L^\infty(K\times \Omega_{k,\rho})}\le C_K r_k\max\{1,\rho^{-1}\},$$
where
$\Omega_{k,\rho}:=\{z\in  \Omega_k:d(z,\de\Omega_k)>\rho\}.$
\end{lem}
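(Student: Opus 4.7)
My strategy is to compare the two Green's functions through the single function
\[
w_k(z,\eta) := \mathcal{H}_k(z,\eta) - \mathcal{H}_H(z,\eta) = \mathcal{G}_k(z,\eta) - \mathcal{G}_H(z,\eta),
\]
where the last equality uses that the common logarithmic singularity at $z=\eta$ cancels in the difference, so proving the bound on $\nabla w_k$ gives both claimed estimates at once. The first step is to observe that for each fixed $\eta \in \Omega_{k,\rho}$, the function $w_k(\cdot,\eta)$ is harmonic in $\Omega_k$: $\mathcal{H}_k(\cdot,\eta)$ is harmonic in $\Omega_k$ by definition of the regular part, and $\mathcal{H}_H(z,\eta)=\frac{1}{2\pi}\ln|z-\eta^*|$ is harmonic away from the reflected point $\eta^*$. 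Since $\Omega_k$ converges to $H$ and $\eta \in \Omega_{k,\rho}$ forces $\eta^*$ to lie outside $\bar H$ at distance at least $c\rho$ from $\partial H$, one has $\eta^*\notin \Omega_k$ for $k$ large and harmonicity holds throughout $\Omega_k$.

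The second step is a boundary estimate. On $\partial \Omega_k$, $\mathcal{G}_k$ vanishes, hence
\[
w_k(z,\eta)=-\mathcal{G}_H(z,\eta)=\frac{1}{2\pi}\ln\frac{|z-\eta|}{|z-\eta^*|},
\]
which is identically $0$ on $\partial H$ by the very definition of the reflection $\eta^*$. Using smoothness of $\partial \Omega$, one can write $\partial\Omega_k$ locally as a graph of height $\varepsilon(z)=O(r_k)$ over $\partial H$, and the elementary identity $|z-\eta|^2-|z-\eta^*|^2 = 4\varepsilon(z)\,d(\eta,\partial H)$, combined with $|z-\eta^*|\ge d(\eta,\partial H)\ge c\rho$, yields $|w_k(z,\eta)|\le C r_k/\rho$ on $\partial\Omega_k$. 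The maximum principle then extends this to $\|w_k(\cdot,\eta)\|_{L^\infty(\Omega_k)}\le C r_k/\rho$. Finally, for $z\in K\subset H$ compact one has $d(z,\partial\Omega_k)\ge c_K>0$ for $k$ large, so standard interior gradient bounds for harmonic functions give $|\nabla_z w_k(z,\eta)|\le C_K r_k/\rho\le C_K r_k\max\{1,\rho^{-1}\}$, as required.

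The main delicate point is making the boundary estimate uniform as $\eta$ ranges over \emph{all} of $\Omega_{k,\rho}$: $\Omega_k$ has diameter $O(r_k^{-1})$, $\eta$ may sit far from $K$ or far from $\partial H$, and $\partial\Omega_k$ approximates $\partial H$ only locally. Handling the far-field portion of $\partial\Omega_k$ requires splitting the boundary and controlling the ratio $|z-\eta|/|z-\eta^*|$ there directly, gaining from $|\eta^*-\eta|/|z-\eta|$ being small when $z$ lies far from $\eta$. This is precisely the type of analysis developed in Appendix B of \cite{DruThiI}, already invoked for Lemma \ref{lemmaGk}, and it adapts with only notational changes to the half-space limit to produce the $O(r_k)$ gain stated in Lemma \ref{lemmaHk}.
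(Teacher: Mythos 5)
Your overall template (the identity $\mathcal{G}_k-\mathcal{G}_H=\mathcal{H}_k-\mathcal{H}_H$, a boundary estimate, the maximum principle, then interior gradient estimates) is the same as the paper's, but you apply the maximum principle in the wrong variable, and this creates two genuine gaps. First, harmonicity: you freeze $\eta\in\Omega_{k,\rho}$ and claim $w_k(\cdot,\eta)$ is harmonic on all of $\Omega_k$ because ``$\eta\in\Omega_{k,\rho}$ forces $\eta^*$ to lie outside $\bar H$ at distance at least $c\rho$ from $\de H$''. This conflates $d(\eta,\de\Omega_k)$ with $d(\eta,\de H)$: the rescaled domain $\Omega_k=r_k^{-1}(\Omega-x_k)$ is close to $H$ only on bounded regions, and since $\Omega$ is not convex in the cases of interest (non-contractible domains, e.g.\ annuli), portions of $\Omega_k$ at distance of order $r_k^{-1}$ from the origin may lie above $\de H$. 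For such $\eta\in\Omega_{k,\rho}$ the reflected point $\eta^*$ can lie inside $\Omega_k$, so $\mathcal{H}_H(\cdot,\eta)$ has a singularity inside the domain and your maximum principle in $z$ collapses. Second, boundary smallness: even where harmonicity holds, the boundary datum $-\mathcal{G}_H(\cdot,\eta)$ on the far portion of $\de\Omega_k$ is not $O(r_k/\rho)$ when $d(\eta,\de H)$ is large, which is compatible with $\eta\in\Omega_{k,\rho}$ (it may be of order $r_k^{-1}$): if $z\in\de\Omega_k$ and $\eta$ are both at distance $\sim r_k^{-1}$ from $\de H$, the ratio $|z-\eta^*|/|z-\eta|$ is not $1+O(r_k)$ and $\mathcal{G}_H(z,\eta)$ is of order one. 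Your proposed far-field gain, ``$|\eta^*-\eta|/|z-\eta|$ small'', fails precisely there, since $|\eta-\eta^*|=2\,d(\eta,\de H)$ need not be small compared with $|z-\eta|$; and because the maximum principle propagates the supremum of the boundary data, order-one values on the far boundary destroy the conclusion. Deferring the uniformity to the analysis behind Lemma \ref{lemmaGk} is not a substitute: that reference is invoked for the regime $r_k=o(d_k)$ (whole-plane limit), not the half-plane one.

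The paper sidesteps both issues by exchanging the roles of the variables: it fixes $z\in K$ and applies the maximum principle in $\eta$ on $\Omega_k$. Then $\mathcal{H}_H(z,\cdot)=\frac{1}{2\pi}\ln|z^*-\cdot\,|$ is harmonic on all of $\Omega_k$, because its singularity $z^*$ is a fixed point near the origin, strictly above $\de H$, hence outside $\Omega_k$ for $k$ large; and on $\de\Omega_k$ the comparison of $\frac{1}{2\pi}\ln|z-\eta|$ with $\frac{1}{2\pi}\ln|z-\eta^*|$ is $O(r_k)$ uniformly in $\eta\in\de\Omega_k$, splitting into $|\eta|\le\delta/r_k$ (Taylor expansion of $\de\Omega$ near $p(x_k)$) and $|\eta|\ge\delta/r_k$, where one uses $\bigl||z-\eta|-|z-\eta^*|\bigr|\le 4\,d(z,\de H)\le C_K$ together with $|z-\eta|\gtrsim \delta/r_k$: the smallness comes from $z\in K$, not from assumptions on $\eta$. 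This gives $\sup_{K\times\Omega_k}|\mathcal{H}_k-\mathcal{H}_H|=O(r_k)$, and the factor $\max\{1,\rho^{-1}\}$ enters only afterwards, through interior elliptic estimates at points $\eta$ with $d(\eta,\de\Omega_k)>\rho$. To salvage your orientation of the argument you would at least have to excise the possible singularity at $\eta^*$ and replace the sup bound on the far boundary by a harmonic-measure estimate; as written, the proof does not go through.
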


\begin{proof}
We have
\begin{equation}
\left\{\begin{array}{ll}
\Delta_ z \mathcal{H}_k( z,\eta)=0&\text{in }\Omega_k\times\Omega_k\\
\mathcal{H}_k( z,\eta)=\frac{1}{2\pi}\ln| z-\eta|&\text{for } z\in \de\Omega_k,\;y\in \Omega_k.
\end{array}
\right.
\end{equation}
We apply the maximum principle to the function
$$\M{H}_k(z,\cdot)-\M{H}_H(z,\cdot):\Omega_k\to \R,$$
where $z\in K$ and $K\subset H$ is a fixed compact set.
We want to prove that 
\begin{equation}\label{Hbordo}
\sup_{z\in K,\eta\in \de \Omega_k} |\M{H}_k(z,\eta)-\M{H}_H(z,\eta)|\le C(K)r_k,
\end{equation}
for $k$ sufficiently large.
By definition we have
$$\M{H}_k(z,\eta)=\frac{1}{2\pi}\ln|z-\eta|,\quad \M{H}_H(z,\eta)=\frac{1}{2\pi}\ln|z-\eta^*|\quad \text{for }\eta\in \de\Omega_k.$$
Let $\delta>0$ be so small that $\de\Omega\cap B_\delta(p(x_k))$ can be written as the graph over $[-\delta,\delta]$ of a function $f$, where $p(x_k)$ is the nearest point projection of $x_k$ onto $\partial\Omega$. Let $\M{L}\subset\R^2$ be the tangent line to $\de\Omega$ at $p(x_k)$ and $f:\M{L}\cap B_\delta(x_k)\to \R$. Up to a rotation we can assume that $f'(p(x_k))=0$ and by a Taylor expansion we obtain
$$|f(t)|\le \frac{\sup_{[-\delta,\delta]}f''}{2}t^2,\quad t\in [-\delta,\delta].$$
In particular for each $x\in \de\Omega\cap B_\delta(x_k)$ we have $d(x,L)\le C_{\Omega,\delta}|x-x_k|^2$.
Scaling to $\Omega_k$ we then obtain
$$\eta\in \de\Omega_k, \quad |\eta|\le \frac{\delta}{r_k} \quad \Rightarrow\quad d(\eta,\de H)=O(r_k).$$
Then, for such $\eta$ and for $z\in K$, we have $|z-\eta|\le |z-\eta^*|+O(r_k)$ and
$$|\M{H}_k-\M{H}_H|=\left|\ln\frac{|z-\eta|}{|z-\eta^*|}\right|\le \ln\left(1+\frac{O(r_k)}{|z-\eta^*|}\right)= C(K)O(r_k),$$
since $\frac{1}{|z-\eta^*|}\ge \ve(K)>0$ uniformly for $z\in K$. On the other hand
$$\eta\in \de\Omega_k, \quad |\eta|\ge \frac{\delta}{r_k} \quad \Rightarrow\quad |z-\eta^*|-|z-\eta|\le C(K).$$
Then
$$|\M{H}_k-\M{H}_H|=\left|\ln\frac{|z-\eta|}{|z-\eta^*|}\right|\le \ln\left(1+\frac{r_k C(K)}{\delta}\right)=  C(K)O(r_k),$$
so that \eqref{Hbordo} is proven. By the maximum principle we obtain
\begin{equation}\label{Hint}
\sup_{z\in K,\eta \in \Omega_k} |\M{H}_k(z,\eta)-\M{H}_H(z,\eta)|= C(K)O(r_k),
\end{equation}
and the bound on $\nabla(\M{H}_k-\M{H}_H)$ follows by elliptic estimates, since both $\M{H}_H(z,\cdot)$ and $\M{H}_k(z,\cdot)$ are harmonic.
The estimate of $\nabla (\M{G}_k-\M{G}_H)$ follows at once since $\M{G}_k-\M{G}_H= \M{H}_k-\M{H}_H$.
\end{proof}

We state Lemmata \ref{lemmaXi2}, \ref{lemmaXi3} and \ref{LemmaXiH} below with general positive coefficients $\alpha_{i,j}$ and $\beta_i$ for future reference, and also to remark that the coefficients $8\pi$ and $4\pi$ appearing in the definitions of $\Phi_{N,h}$, $\Phi_N$, etc..., can be replaced by any other positive constants.

\begin{lem}\label{lemmaXi2}
Consider for some $J\ge 2$ the functional $\Xi_{J,\R^2}: (\R^2)^J\setminus D_J\to \R$
$$\Xi_{J,\R^2}( z_1,\dots, z_J):=\sum_{\scriptstyle 1\le i, j\le J\atop \scriptstyle i\ne  j}\alpha_{i,j} \ln\frac{1}{|z_i-z_j|},$$
with $\alpha_{i,j}>0$ for $1\le i,j\le J$, $i\ne j$. Then $\Xi_{J,\R^2}$ has no critical points.
\end{lem}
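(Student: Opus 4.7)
The plan is to exploit the logarithmic homogeneity of $\Xi_{J,\R^2}$ and derive a Pohozaev-type identity that rules out critical points outright. For any $\lambda>0$, direct substitution yields
\[
\Xi_{J,\R^2}(\lambda z_1,\dots,\lambda z_J) = \Xi_{J,\R^2}(z_1,\dots,z_J) - (\ln\lambda)\,C, \qquad C:=\sum_{\scriptstyle 1\le i,j\le J\atop \scriptstyle i\ne j}\alpha_{i,j}>0,
\]
because each summand $\alpha_{i,j}\ln|z_i-z_j|^{-1}$ loses exactly $\alpha_{i,j}\ln\lambda$ under this scaling. Differentiating both sides in $\lambda$ at $\lambda=1$ and applying the chain rule gives
\[
\sum_{k=1}^J \langle z_k, \nabla_{z_k}\Xi_{J,\R^2}(z_1,\dots,z_J)\rangle = -C,
\]
an identity valid at every point of $(\R^2)^J\setminus D_J$.

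At a critical point the left-hand side vanishes term-by-term, whereas the right-hand side is the strictly negative constant $-C$ (nonempty and positive since $J\ge 2$ and all $\alpha_{i,j}>0$). This immediate contradiction proves that $\Xi_{J,\R^2}$ admits no critical points.

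No technical obstacle is anticipated: the argument is essentially the Euler identity associated with the scaling $z_k\mapsto\lambda z_k$, and the distinctness hypothesis $(z_1,\dots,z_J)\notin D_N$ is used only to ensure that $\Xi_{J,\R^2}$ is well-defined. As an alternative, purely elementary route one may select an index $k^\star$ achieving $\max_k|z_k|$: for every $j\ne k^\star$ one then has $|z_{k^\star}|\ge|z_j|$ and $z_{k^\star}\ne z_j$, hence by Cauchy--Schwarz $\langle z_{k^\star},z_{k^\star}-z_j\rangle>0$; pairing these inequalities with the positive weights $\alpha_{k^\star,j}+\alpha_{j,k^\star}$ shows $\langle z_{k^\star},\nabla_{z_{k^\star}}\Xi_{J,\R^2}\rangle<0$, again precluding criticality.
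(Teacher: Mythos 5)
Your main argument is correct and differs from the paper's. The paper argues pointwise at an extremal configuration point: take $z_i$ on the boundary of the convex hull of $\{z_1,\dots,z_J\}$ and note that the derivative in an outward direction is strictly negative, since every distance $|z_i-z_j|$ increases; hence $\nabla_{z_i}\Xi_{J,\R^2}\neq 0$. You instead exploit the exact logarithmic behaviour under dilations to get the global Euler (virial-type) identity $\sum_k \langle z_k,\nabla_{z_k}\Xi_{J,\R^2}\rangle=-C$ with $C=\sum_{i\ne j}\alpha_{i,j}>0$, which is manifestly incompatible with a vanishing gradient; the computation is correct, and the identity even yields a quantitative lower bound $\max_k|\nabla_{z_k}\Xi_{J,\R^2}|\ge C/(J\max_k|z_k|)$, something the convex-hull argument does not give directly. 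The paper's approach has the advantage of being local and of the same style as the arguments used for the half-space functionals in Lemmata \ref{lemmaXi3}, \ref{LemmaXiH} and \ref{LemmaXiHH}, where translation/scaling invariance is broken by the boundary terms and your scaling trick would not apply as stated. Your ``alternative elementary route'' (pairing the farthest-from-the-origin point $z_{k^\star}$ with the inequalities $\langle z_{k^\star},z_{k^\star}-z_j\rangle>0$) is also correct, and is in essence the paper's convex-hull argument specialized to a point of maximal norm, which is an extreme point of the hull.
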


\begin{proof}
Consider $1\le i\le J$ such that $ z_i$ lies on the boundary of the convex hull of $\{ z_1,\dots, z_J\}$, and observe that the derivative with respect to $z_i$ in a direction pointing outwards of the convex hull is negative, as $|z_i- z_j|$ is increasing for every $1\le i\ne j\le J$. 
\end{proof}

\begin{lem}\label{lemmaXi3}
Consider for some $J\ge 1$ the functional $\hat\Xi_{J,\hat H}: \hat H^J\setminus D_J\to \R$
$$\hat \Xi_{J,\hat H}( z_1,\dots, z_J):=\sum_{\scriptstyle 1\le i, j\le J\atop \scriptstyle i\ne  j}\alpha_{i,j} \ln\frac{1}{|z_i-z_j|}+\sum_{1\le i\le J}\gamma_i\ln \frac{1}{d(z_i,\partial\hat H)},$$
with $\alpha_{i,j}>0$ for $1\le i<j\le J$ and $\gamma_i> 0$ for $1\le i\le J$. Then
$\hat \Xi_{J,\hat H}$ has no critical point.
\end{lem}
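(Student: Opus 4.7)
\smallskip

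\noindent\textbf{Proof proposal for Lemma \ref{lemmaXi3}.} The plan is to adapt the simple convex-hull argument used in Lemma \ref{lemmaXi2}, exploiting the extra structure provided by the half-space. The key observation is that the boundary-distance penalty $\gamma_i\ln(1/d(z_i,\partial\hat H))$ becomes small (its contribution to the functional decreases) when $z_i$ moves \emph{away} from $\partial\hat H$, i.e., in the direction in which $d(\cdot,\partial\hat H)$ grows; this is precisely a direction in which the pairwise interaction terms are also (weakly) non-increasing once we pick the correct point to perturb.

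Concretely, writing $z_i=(x_i,y_i)$, recall that $\hat H=\{y>-\hat L\}$ so that $d(z_i,\partial\hat H)=y_i+\hat L$. First, I would pick an index $i_0\in\{1,\dots,J\}$ maximizing the second coordinate, so that $y_{i_0}\ge y_j$ for every $j\ne i_0$. Then I compute the partial derivative of $\hat\Xi_{J,\hat H}$ with respect to $y_{i_0}$. The pairwise interaction gives
\[
\partial_{y_{i_0}}\!\Big(\sum_{i\ne j}\alpha_{i,j}\ln\tfrac{1}{|z_i-z_j|}\Big)
= -\sum_{j\ne i_0}(\alpha_{i_0,j}+\alpha_{j,i_0})\,\frac{y_{i_0}-y_j}{|z_{i_0}-z_j|^2}\le 0,
\]
since $y_{i_0}-y_j\ge 0$ and $\alpha_{i_0,j},\alpha_{j,i_0}>0$. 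The boundary term contributes
\[
\partial_{y_{i_0}}\!\Big(\gamma_{i_0}\ln\tfrac{1}{y_{i_0}+\hat L}\Big)=-\frac{\gamma_{i_0}}{y_{i_0}+\hat L}<0,
\]
strictly, because $\gamma_{i_0}>0$ and $y_{i_0}+\hat L>0$ on $\hat H$. The remaining boundary terms for $j\ne i_0$ do not depend on $y_{i_0}$. Summing, $\partial_{y_{i_0}}\hat\Xi_{J,\hat H}<0$, which forbids $(z_1,\dots,z_J)$ from being a critical point.

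Since everything reduces to a one-line monotonicity computation, I do not expect any real obstacle: the argument does not require $i_0$ to be unique (any maximizer works), and the case $J=1$ (where the pairwise sum is empty) is handled by the boundary term alone. The only thing to keep track of is the sign convention: the coefficients $\alpha_{i,j},\gamma_i$ are positive by hypothesis, and $\ln(1/\cdot)=-\ln(\cdot)$ flips the sign, so the contributions add coherently. The same scheme (picking the point minimizing/maximizing an adapted coordinate) also handles the related half-space lemmata invoked later in the paper, with $\partial\hat H$ replaced by the corresponding boundary.
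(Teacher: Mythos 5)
Your proof is correct and is essentially the paper's own argument: you pick the point farthest from $\partial\hat H$ (largest second coordinate) and show the vertical derivative of $\hat\Xi_{J,\hat H}$ there is strictly negative, the pairwise terms being non-positive and the boundary term strictly negative. The computation matches the paper's proof of Lemma \ref{lemmaXi3} line by line, so there is nothing to add.
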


\begin{proof}
Given $(z_1,\dots, z_J)$, write $z_i=(a_i,b_i)$ and up to reordering, assume that $b_1\le \dots\le b_J$, i.e. $z_J$ is one of the points farthest from $\partial \hat H$. Then
$$\frac{\de}{\de b_J}\ln\frac{1}{|z_i-z_J|}\le 0,\quad \text{for }1\le i<J,\quad \frac{\de}{\de b_J}\ln\frac{1}{d(z_J,\de\hat H)}<0,$$
hence
$$\frac{\de \hat \Xi_{J,\hat H}( z_1,\dots, z_J)}{\de b_J}  <0.$$
\end{proof}

\begin{lem}\label{LemmaXiH}
Consider for some $J\ge 1$ the functionals $\Xi_{J,H}^\pm: H^J\setminus D_J\to \R$ given as
$$\Xi_{J,H}^\pm (z_1,\dots, z_J):=\sum_{ \scriptstyle 1\le i,j\le J\atop \scriptstyle i\ne j}\alpha_{i,j} \mathcal{G}_H( z_i, z_j)\pm \sum_{1\le i\le J}\beta_j\mathcal{H}_H( z_i,z_i),$$
with $\alpha_{i,j}>0$ for $1\le i,j\le J$, $i\ne j$, $\beta_j> 0$ for $1\le i\le J$. Then $\Xi_{J,H}^{\pm}$ has no critical points.
\end{lem}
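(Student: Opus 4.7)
\medskip

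\noindent\textbf{Proof proposal.} The plan is to exploit a dilation invariance of the half-space Green's function in order to obtain a Pohozaev-type identity that rules out critical points. First I would translate coordinates so that $\partial H=\{y=0\}$, which does not alter $\Xi^{\pm}_{J,H}$ since only relative positions between points and their reflections enter the definition of $\mathcal{G}_H$ and $\mathcal{H}_H$. In these coordinates the reflection is $(a,b)^{*}=(a,-b)$ and commutes with every linear dilation $z\mapsto \lambda z$ with $\lambda>0$, in the sense that $(\lambda z)^{*}=\lambda z^{*}$. Combined with the explicit formulas \eqref{GreenH}, this yields the scaling rules
\[
\mathcal{G}_H(\lambda z_i,\lambda z_j)=\mathcal{G}_H(z_i,z_j),\qquad \mathcal{H}_H(\lambda z_i,\lambda z_i)=\mathcal{H}_H(z_i,z_i)+\frac{\ln\lambda}{2\pi},
\]
i.e. $\mathcal{G}_H$ is scale-invariant while $\mathcal{H}_H(z,z)$ shifts by a constant.

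Next I would substitute these relations into the definition of $\Xi^{\pm}_{J,H}$ to obtain the one-parameter identity
\[
\Xi^{\pm}_{J,H}(\lambda z_1,\ldots,\lambda z_J)=\Xi^{\pm}_{J,H}(z_1,\ldots,z_J)\pm \frac{\sum_{j=1}^{J}\beta_j}{2\pi}\,\ln\lambda,
\]
valid for every $\lambda>0$ and every $(z_1,\ldots,z_J)\in H^J\setminus D_J$. Differentiating at $\lambda=1$ in the variable $\log\lambda$ yields the Pohozaev-type relation
\[
\sum_{i=1}^{J} z_i\cdot\nabla_{z_i}\Xi^{\pm}_{J,H}(z_1,\ldots,z_J)=\pm\frac{\sum_{j=1}^{J}\beta_j}{2\pi},
\]
whose right-hand side is strictly nonzero because each $\beta_j>0$.

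To finish, I would observe that at any critical point $(z_1,\ldots,z_J)$ of $\Xi^{\pm}_{J,H}$ every $\nabla_{z_i}\Xi^{\pm}_{J,H}$ vanishes, so the left-hand side of the above identity would be $0$, contradicting the nonzero right-hand side. Thus $\Xi^{\pm}_{J,H}$ admits no critical point on $H^J\setminus D_J$, which is the claim. There is no serious obstacle here: the proof reduces to a scaling computation. The only mild point to check is that the dilation preserves the domain $H^J\setminus D_J$, which is immediate since $\lambda>0$ sends $H$ to $H$ and keeps the $z_i$ distinct; equivalently, one can perform the dilation about any fixed point of $\partial H$ without first translating, since reflection across $\partial H$ commutes with such a dilation.
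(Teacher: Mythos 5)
Your proof is correct, but it proceeds by a genuinely different route from the paper. You normalize $\partial H$ to $\{y=0\}$ (or dilate about a boundary point), use that reflection across $\partial H$ commutes with dilations, and read off from \eqref{GreenH} that $\mathcal{G}_H$ is scale-invariant while $\mathcal{H}_H(z,z)$ shifts by $\tfrac{\ln\lambda}{2\pi}$; differentiating the resulting identity in $\lambda$ gives the Pohozaev-type relation $\sum_i z_i\cdot\nabla_{z_i}\Xi^{\pm}_{J,H}=\pm\tfrac{1}{2\pi}\sum_j\beta_j\neq 0$, which is incompatible with a critical point. The paper instead argues pointwise-monotonically: it picks an extremal point of the configuration (leftmost horizontally when the abscissas are not all equal, otherwise the lowest or highest point when the $z_i$ are vertically aligned or $J=1$) and shows from \eqref{GreenH} that the corresponding partial derivative has a strict sign, using the positivity of the coefficients. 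Your dilation identity is arguably slicker: it treats the signs $+$ and $-$ simultaneously and in fact needs no positivity of the $\alpha_{i,j}$ at all, only $\sum_j\beta_j\neq 0$, so it is more general for this particular statement. What the paper's coordinate-monotonicity argument buys is robustness: essentially the same computation is reused for Lemma \ref{lemmaXi3} and for Lemma \ref{LemmaXiHH}, where the extra term $\varphi$ (only assumed monotone in the vertical direction) and the strip $H\cap\hat H$ destroy scale invariance, so your scaling trick would not extend to those companion results; but for Lemma \ref{LemmaXiH} itself your argument is complete, the only points to verify being the ones you already noted (dilations preserve $H^J\setminus D_J$ and commute with the reflection).
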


\begin{proof}
Given $(z_1,\dots, z_J)$, write in coordinates $z_i=(a_i,b_i)\in H$ for $1\le i\le J$ and, up to reordering, assume first that $J>1$, $a_1\le a_2\le\dots\le a_J$ and $a_1<a_J$.
Then, it follows from \eqref{GreenH}
$$\frac{\de \M{G}_H(z_1,z_J)}{\de a_1}>0, \quad \frac{\de \M{G}_H(z_1,z_j)}{\de a_1}\ge 0, \text{ for }2\le j\le J-1,\quad \frac{\de \M{H}_H(z_1,z_1)}{\de a_1}=0,$$
hence $\nabla_{z_1} \Xi_{J,H}^\pm( z_1,\dots, z_J)\ne 0$.

If $J=1$ or all the $z_i's$ are vertically, i.e. $a_1=\dots=a_J$, then, up to reordering, assume that $b_1< \dots< b_J$. Then, again from \eqref{GreenH}, we get
\begin{equation}\label{deb1}
\frac{\de \M{G}_H(z_1,z_j)}{\de b_1}<0,\quad \text{for }2\le j\le J, \quad \frac{\de \M{H}_H(z_1,z_1)}{\de b_1}<0,
\end{equation}
hence $\nabla_{z_1} \Xi_{J,H}^+(z_1,\dots, z_J)\ne 0$,
and 
$$\frac{\de \M{G}_H(z_J,z_j)}{\de b_J}>0,\quad \text{for }2\le j\le J, \quad \frac{\de \M{H}_H(z_J,z_J)}{\de b_J}<0,$$
hence $\nabla_{z_N} \Xi_{J,H}^-(z_1,\dots, z_J)\ne 0$.
\end{proof}

\begin{lem}\label{LemmaXiHH}
Consider for some $J\ge 1$ the functional $\Xi_{J,H,\hat H}^\varphi: (H\cap\hat H)^J\setminus D_J\to \R$ given as
$$\Xi_{J,H,\hat H}^\varphi(z_1,\dots,z_J):=\Xi^+_{J,H}(z_1,\dots,z_J)+\sum_{1\le i\le J} \gamma_i\varphi(z_i),$$
where $H$ and $\hat H$ are as in \eqref{defHH} for some $L,\hat L>0$,
$\Xi^+_{J,H}$ is as in Lemma \ref{LemmaXiH},  $\gamma_i\ge 0$, for $1\le i\le J$, $\varphi\in C^1(H\cap \hat H)$ and $\nabla \varphi(z)\cdot(0,1)\le 0$ for every $z\in H\cap \hat H$.
Then $\Xi_{J,H,\hat H}^\varphi$ has no critical points.
\end{lem}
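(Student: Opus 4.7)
The plan is to follow the same sign-exploitation philosophy as in Lemma \ref{LemmaXiH}, but to replace the ``one-point horizontal displacement'' used there by a \emph{collective vertical translation} of all the $z_i$'s. The reason for this change is that the hypothesis $\nabla \varphi \cdot (0,1)\le 0$ only gives sign control in the vertical direction, so the argument that moved an extremal point horizontally in the proof of Lemma \ref{LemmaXiH} cannot be transferred directly (the horizontal derivative of $\varphi$ is completely unconstrained).

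Given $(z_1,\dots,z_J)\in (H\cap\hat H)^J\setminus D_J$, write $z_i=(a_i,b_i)$ and consider, for $\tau\in\mathbb{R}$ small, the curve $\gamma(\tau):=(z_1+\tau e_2,\dots,z_J+\tau e_2)$, where $e_2=(0,1)$. I would show that the derivative of $\Xi_{J,H,\hat H}^\varphi\circ\gamma$ at $\tau=0$ is strictly negative. Since by the chain rule
$$\sum_{1\le i\le J}\nabla_{z_i}\Xi_{J,H,\hat H}^\varphi(z_1,\dots,z_J)\cdot e_2 = \frac{d}{d\tau}\bigg|_{\tau=0}\Xi_{J,H,\hat H}^\varphi(\gamma(\tau)),$$
this would imply $\nabla\Xi_{J,H,\hat H}^\varphi\ne 0$ at $(z_1,\dots,z_J)$, and the lemma follows.

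The computation splits into three ingredients. For the Green's function terms, \eqref{GreenH} shows that $|z_i-z_j|$ is invariant under the translation, while $(z_j+\tau e_2)^*=z_j^*-\tau e_2$, so
$$|(z_i+\tau e_2)-(z_j+\tau e_2)^*|^2=(a_i-a_j)^2+(b_i+b_j-2L+2\tau)^2.$$
Differentiating at $\tau=0$ yields
$$\frac{d}{d\tau}\bigg|_{\tau=0}\M{G}_H(z_i+\tau e_2,z_j+\tau e_2)=\frac{b_i+b_j-2L}{\pi|z_i-z_j^*|^2}<0,$$
since $z_i,z_j\in H$ forces $b_i,b_j<L$. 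For the regular-part term, using $\M{H}_H(z_i,z_i)=\frac{1}{2\pi}\ln(2(L-b_i))$ one gets
$$\frac{d}{d\tau}\bigg|_{\tau=0}\M{H}_H(z_i+\tau e_2,z_i+\tau e_2)=-\frac{1}{2\pi(L-b_i)}<0.$$
Finally, for the perturbation, $\frac{d}{d\tau}|_{\tau=0}\varphi(z_i+\tau e_2)=\nabla\varphi(z_i)\cdot e_2\le 0$ by hypothesis.

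Weighting these contributions by the strictly positive coefficients $\alpha_{i,j},\beta_i$ inherited from $\Xi_{J,H}^+$ and by the non-negative $\gamma_i$'s, the total derivative along $\gamma$ at $\tau=0$ is strictly negative, which completes the argument. The only mild obstacle is the conceptual one of recognizing that Lemma \ref{LemmaXiH}'s horizontal-displacement trick fails in the present setting; once one commits to testing against a uniform vertical vector field, the half-space geometry forces every ingredient of $\Xi_{J,H}^+$ to cooperate, and the lower boundary $y=-\hat L$ of $\hat H$ never enters the computation (it only serves to confine the domain of $\Xi_{J,H,\hat H}^\varphi$).
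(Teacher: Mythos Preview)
Your argument is correct. The collective vertical translation $\gamma(\tau)=(z_1+\tau e_2,\dots,z_J+\tau e_2)$ indeed makes the singular part $\ln\frac{1}{|z_i-z_j|}$ exactly invariant, while the reflected quantities $|z_i-z_j^*|$ and $|z_i-z_i^*|=2(L-b_i)$ strictly decrease, and the $\varphi$-terms are non-increasing by hypothesis; this forces $\sum_i \partial_{b_i}\Xi_{J,H,\hat H}^\varphi<0$, hence the gradient cannot vanish.

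This is a genuinely different route from the paper's. The paper does not translate all the points simultaneously; instead it differentiates with respect to the vertical coordinate of a \emph{single} extremal point (the one with largest $b$-coordinate, so that both $\frac{b_i+b_j-2L}{|z_i-z_j^*|^2}$ and $-\frac{b_i-b_j}{|z_i-z_j|^2}$ are nonpositive in $\partial_{b_i}\M G_H(z_i,z_j)$), and then appends the observation $\partial_{b}\varphi\le 0$. Your approach buys two things: you avoid having to order the points and single out an extremal one, and you sidestep any sign discussion of the mixed term $-\frac{b_i-b_j}{|z_i-z_j|^2}$ altogether, since the pairwise distances are translation-invariant. The paper's approach, on the other hand, fits more directly into the pattern of Lemma \ref{LemmaXiH} (single-point variation), which is why it is presented there as a one-line addendum. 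Both are short; yours is the more self-contained of the two.
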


\begin{proof}
The proof is identical to the proof of Lemma \ref{LemmaXiH}, using \eqref{deb1} together with
$$\frac{\de \varphi(z_1)}{\de b_1}=\nabla\varphi(z_1)\cdot (0,1)\le 0.$$
\end{proof}


\section{The Euler characteristic of configuration spaces}\label{a:configuration}

\noindent We will need the following general fact about {\em configuration spaces}.

\begin{prop}\label{p:configurations}
Let $\Omega$ be a smoothly bounded domain of $\R^2$ and call for any $N\in \mathbb{N}^\star$
$$F (\Omega, N):=\Omega^N\setminus D_N=\left\{ (x_1, \dots, x_N) \in \Omega^N  \; : \; x_i \neq x_j \hbox{ for } i \neq j \right\}.$$ Then
$$
  \chi (F (\Omega, N)) = \chi (\Omega) (\chi (\Omega) -1) ... (\chi (\Omega) -N + 1).
  $$
\end{prop}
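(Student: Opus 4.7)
The plan is to argue by induction on $N$, using the classical \emph{Fadell--Neuwirth fibration}
\[
  p_N : F(\Omega,N) \longrightarrow F(\Omega,N-1), \qquad (x_1,\dots,x_N)\longmapsto (x_1,\dots,x_{N-1}),
\]
together with the multiplicativity of the Euler characteristic under locally trivial fibrations with base and fiber of finite homotopy type. The base case $N=1$ is trivial since $F(\Omega,1)=\Omega$ and $\chi(F(\Omega,1))=\chi(\Omega)$.

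The first step is to check that $p_N$ is a locally trivial fibration. Given $(x_1^0,\dots,x_{N-1}^0)\in F(\Omega,N-1)$, one chooses pairwise disjoint open disks $U_i\subset \Omega$ around each $x_i^0$ and, using a smooth ambient isotopy of $\overline{\Omega}$ supported in $\bigsqcup_i U_i$ and equal to the identity near $\partial \Omega$, one identifies the fiber of $p_N$ over nearby points with the fixed space $\Omega\setminus\{x_1^0,\dots,x_{N-1}^0\}$. This gives the local trivialization; it is the standard construction of Fadell--Neuwirth adapted to a manifold with boundary.

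The second step is to compute the Euler characteristic of the typical fiber $\Omega\setminus\{x_1,\dots,x_{N-1}\}$. For each interior point $x_i$, a small open disk $B_i\ni x_i$ can be excised and the resulting space is homotopy equivalent to $\Omega$ with $N-1$ disjoint open disks removed. Iterating Mayer--Vietoris with the decomposition $\Omega = (\Omega\setminus B_i) \cup \overline{B_i}$ (noting $\overline{B_i}$ is contractible and $(\Omega\setminus B_i)\cap \overline{B_i}\simeq S^1$ has vanishing Euler characteristic) yields
\[
  \chi(\Omega\setminus B_i) = \chi(\Omega) - 1,
\]
and hence
\[
  \chi\bigl(\Omega\setminus\{x_1,\dots,x_{N-1}\}\bigr) = \chi(\Omega) - (N-1).
\]

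The third and final step is the multiplicativity formula $\chi(\text{total}) = \chi(\text{base})\cdot\chi(\text{fiber})$ for fibrations with finite-type base and fiber, which applies here because $F(\Omega,N-1)$ is a smooth manifold with corners of finite CW type and the fiber is a surface with boundary, also of finite CW type. Combining the three steps,
\[
  \chi(F(\Omega,N)) = \chi(F(\Omega,N-1))\cdot\bigl(\chi(\Omega)-(N-1)\bigr),
\]
and the claimed product formula follows by induction. The only mildly delicate point, and the main obstacle I anticipate, is the local triviality of $p_N$ near the boundary of $\Omega$: it has to be checked that the isotopies used to trivialize the fiber can be arranged to fix $\partial\Omega$ pointwise, which is achieved by choosing the cut-off disks $U_i$ well inside $\Omega$ (the base point components $x_i^0$ are in the interior) and taking standard bump-type vector fields supported away from $\partial\Omega$.
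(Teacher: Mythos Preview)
Your argument is correct and follows the same overall skeleton as the paper's proof: both use the Fadell--Neuwirth projection $F(\Omega,N)\to F(\Omega,N-1)$ with fiber $\Omega\setminus\{x_1,\dots,x_{N-1}\}$, compute $\chi(\text{fiber})=\chi(\Omega)-(N-1)$, invoke multiplicativity of the Euler characteristic, and conclude by induction.

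The one substantive difference is where the work goes. The paper cites the product formula for \emph{orientable} fibrations (Spanier, p.~481), and accordingly spends most of its proof on a Claim showing that $\pi_1(F(\Omega,N-1))$ acts trivially on $H_*(\Omega_{N-1};\R)$; this is done by hand, choosing explicit generators of $H_1$ of the fiber (loops around the holes of $\Omega$ and small circles around the removed points) and checking that a loop in the base moves them only by isotopy. You bypass this entirely by appealing to the stronger fact that $\chi(E)=\chi(B)\chi(F)$ holds for any locally trivial bundle with base and fiber of finite CW type, with no orientability hypothesis (e.g.\ via the cell-by-cell additivity of $\chi$, or the observation that passing to successive pages of the Serre spectral sequence preserves the alternating sum of dimensions even with twisted coefficients). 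This is legitimate and makes your write-up shorter; the paper's explicit monodromy computation, on the other hand, is more self-contained and gives slightly more information (trivial action on $H_*$, not just equal Euler characteristics). Your remark about local triviality near $\partial\Omega$ is a non-issue here since $\Omega$ is open, so all configuration points lie in the interior and the isotopies can be taken compactly supported.
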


\begin{proof}
This result follows from the fact that $F(\Omega,N)$ fibers on $F(\Omega,N-1)$ 
with fiber $\Omega_{N-1}$ (projecting on the last of the  $N$ points), where we denote by $\Omega_j$ the set $\Omega$ with $j$ points removed, see \cite[Chapter 2]{Spa66}. 

\medskip

\noindent\textbf{Claim} The above fibration is {\em orientable}, in the sense that $\pi_1(F(\Omega,N-1))$ acts trivially on the homology $H_*(\Omega_{N-1},\R)$ of the fiber.

\begin{proof}[Proof of the claim.]
Since $\Omega_{N-1}$ is homotopically equivalent to a 
(connected) finite union of circles, the only relevant homology group of $\Omega_{N-1}$ is $H_{1}(\Omega_{N-1},\R)$. Consider now a closed path $\gamma\in C^0([0,1], F(\Omega,N-1))$ representing a homotopy class $[\gamma]\in \pi_1(F(\Omega,N-1))$. Set
$$(x_1(t),\dots,x_{N-1}(t)):=\gamma(t).$$
For the point $\gamma(0)=:(x_1,\dots,x_{N-1})\in F(\Omega,N-1)$ consider the fiber
$$\Omega\setminus\{x_1,\dots,x_{N-1}\}\simeq \Omega_{N-1}$$ and choose generators of its first homology group as follows. 
Since $\Omega$ is a two-dimensional domain, it is homotopic to a disk with $r$ points removed, 
for $r = 0, 1, 2, \dots$. Given the $i$-th hole of $\Omega$, $i = 1, \dots, r$, 
choose an element $\sigma_i$ of $H_1(\Omega,\R)$ in the form of a smooth Jordan curve that does not contain any of the points $x_1(t),\dots, x_{N-1}(t)$ with $t \in [0,1]$.   Then it is clear that the action of $\gamma$ on $\sigma_1,\dots,\sigma_r$ is trivial.

Now, for each $x_j$, $j=1,\dots,k-1$, choose a small and smooth Jordan  curve $\eta_j$ around $x_j$ not intersecting $\sigma_1,\dots, \sigma_r$, for instance the oriented boundary of $B_\ve(x_j)$. By compactness, if we choose $\ve$ small enough we get
$$\overline{B_\ve(x_j(t))}\cap \overline{B_\ve(x_i(t))}=\emptyset,\quad \text{for }1\le i< j\le N-1,\;t\in [0,1]$$
and
$$\overline{B_\ve(x_j(t))}\cap \sigma_i=\emptyset,\quad \text{for }1\le j\le N-1,\;1\le i\le r,\;t\in [0,1].$$
Again it follows immediately that the action of $[\gamma]$ of $\eta_j$ is trivial.
Since
$$H_1(\Omega_{N-1},\R)=\text{span}\{\sigma_i, \eta_j, 1\le i\le r, 1\le j\le N-1\}$$
the claim is proven.
\end{proof}

By the product formula for orientable fibrations, see Spanier \cite[p.481]{Spa66}, we infer
$$\chi(F(\Omega,N)) =\chi(F(\Omega,N-1))\chi(\Omega_{N-1})=\chi(F(\Omega,N-1))(\chi(\Omega)-N+1).$$
Now the proposition follows at once by induction in $N$.
\end{proof}

\bibliographystyle{abbrv}
 \bibliography{Bibliography}

\begin{thebibliography}{10}

\bibitem{AdimurthiDruet}
Adimurthi and O.~Druet.
\newblock Blow-up analysis in dimension 2 and a sharp form of
  {T}rudinger-{M}oser inequality.
\newblock {\em Comm. Partial Differential Equations}, 29(1-2):295--322, 2004.

\bibitem{AdimurthiFailure}
Adimurthi and S.~Prashanth.
\newblock Failure of {P}alais-{S}male condition and blow-up analysis for the
  critical exponent problem in {$\bold R^2$}.
\newblock {\em Proc. Indian Acad. Sci. Math. Sci.}, 107(3):283--317, 1997.

\bibitem{BahriBook}
A.~Bahri.
\newblock {\em Critical points at infinity in some variational problems},
  volume 182 of {\em Pitman Research Notes in Mathematics Series}.
\newblock Longman Scientific \& Technical, Harlow; copublished in the United
  States with John Wiley \& Sons, Inc., New York, 1989.

\bibitem{BattagliaUnifBd}
L.~Battaglia.
\newblock Uniform bounds for solutions to elliptic problems on simply connected
  planar domains.
\newblock {\em Proc. Amer. Math. Soc.}, 147(10):4289--4299, 2019.

\bibitem{BatManThi}
L.~Battaglia, G.~Mancini, and P.-D. Thizy.
\newblock A priori energy bounds on simply connected planar domains.
\newblock 2022.
\newblock In preparation.

\bibitem{CarlesonChang}
L.~Carleson and S.-Y.~A. Chang.
\newblock On the existence of an extremal function for an inequality of {J}.\
  {M}oser.
\newblock {\em Bull. Sci. Math. (2)}, 110(2):113--127, 1986.

\bibitem{chang}
K.-c. Chang.
\newblock {\em Infinite-dimensional {M}orse theory and multiple solution
  problems}, volume~6 of {\em Progress in Nonlinear Differential Equations and
  their Applications}.
\newblock Birkh\"auser Boston, Inc., Boston, MA, 1993.

\bibitem{ChenLinSharpEst}
C.-C. Chen and C.-S. Lin.
\newblock Sharp estimates for solutions of multi-bubbles in compact {R}iemann
  surfaces.
\newblock {\em Comm. Pure Appl. Math.}, 55(6):728--771, 2002.

\bibitem{ChenLin-Liouville}
C.-C. Chen and C.-S. Lin.
\newblock Topological degree for a mean field equation on {R}iemann surfaces.
\newblock {\em Comm. Pure Appl. Math.}, 56(12):1667--1727, 2003.

\bibitem{CostaTintarev}
D.~G. Costa and C.~Tintarev.
\newblock Concentration profiles for the {T}rudinger-{M}oser functional are
  shaped like toy pyramids.
\newblock {\em J. Funct. Anal.}, 266(2):676--692, 2014.

\bibitem{deFigLions}
D.~G. de~Figueiredo, P.-L. Lions, and R.~D. Nussbaum.
\newblock A priori estimates and existence of positive solutions of semilinear
  elliptic equations.
\newblock {\em J. Math. Pures Appl. (9)}, 61(1):41--63, 1982.

\bibitem{DMMT}
F.~De~Marchis, A.~Malchiodi, L.~Martinazzi, and P.-D. Thizy.
\newblock Critical points of the {M}oser-{T}rudinger functional on closed
  surfaces.
\newblock {\em Invent. Math.}, 230(3):1165--1248, 2022.

\bibitem{DelPNewSol}
M.~del Pino, M.~Musso, and B.~Ruf.
\newblock New solutions for {T}rudinger-{M}oser critical equations in {$\Bbb
  R^2$}.
\newblock {\em J. Funct. Anal.}, 258(2):421--457, 2010.

\bibitem{DelPBeyond}
M.~del Pino, M.~Musso, and B.~Ruf.
\newblock Beyond the {T}rudinger-{M}oser supremum.
\newblock {\em Calc. Var. Partial Differential Equations}, 44(3-4):543--576,
  2012.

\bibitem{DengMusso}
S.~Deng and M.~Musso.
\newblock Bubbling solutions for an exponential nonlinearity in {$\Bbb{R}^2$}.
\newblock {\em J. Differential Equations}, 257(7):2259--2302, 2014.

\bibitem{DJLWA}
W.~Ding, J.~Jost, J.~Li, and G.~Wang.
\newblock The differential equation {$\Delta u=8\pi-8\pi he^u$} on a compact
  {R}iemann surface.
\newblock {\em Asian J. Math.}, 1(2):230--248, 1997.

\bibitem{DjaMal}
Z.~Djadli and A.~Malchiodi.
\newblock Existence of conformal metrics with constant q-curvature.
\newblock {\em Ann. Math.}, 168(3):813--858, 2008.

\bibitem{DruetDuke}
O.~Druet.
\newblock Multibumps analysis in dimension 2: quantification of blow-up levels.
\newblock {\em Duke Math. J.}, 132(2):217--269, 2006.

\bibitem{DruThiI}
O.~Druet and P.-D. Thizy.
\newblock Multi-bump analysis for {T}rudinger-{M}oser nonlinearities. {I}.
  {Q}uantification and location of concentration points.
\newblock {\em J. Eur. Math. Soc. (JEMS)}, 22(12):4025--4096, 2020.

\bibitem{Flucher}
M.~Flucher.
\newblock Extremal functions for the {T}rudinger-{M}oser inequality in {$2$}
  dimensions.
\newblock {\em Comment. Math. Helv.}, 67(3):471--497, 1992.

\bibitem{Gilbarg}
D.~Gilbarg and N.~S. Trudinger.
\newblock {\em Elliptic partial differential equations of second order}.
\newblock Classics in Mathematics. Springer-Verlag, Berlin, 2001.
\newblock Reprint of the 1998 edition.

\bibitem{Han}
Z.-C. Han.
\newblock Asymptotic approach to singular solutions for nonlinear elliptic
  equations involving critical {S}obolev exponent.
\newblock {\em Ann. Inst. H. Poincar\'e Anal. Non Lin\'eaire}, 8(2):159--174,
  1991.

\bibitem{MasNakSan}
S.~Ibrahim, N.~Masmoudi, K.~Nakanishi, and F.~Sani.
\newblock Sharp threshold nonlinearity for maximizing the {T}rudinger-{M}oser
  inequalities.
\newblock {\em J. Funct. Anal.}, 278(1):108302, 52, 2020.

\bibitem{LammRobertStruwe}
T.~Lamm, F.~Robert, and M.~Struwe.
\newblock The heat flow with a critical exponential nonlinearity.
\newblock {\em J. Funct. Anal.}, 257(9):2951--2998, 2009.

\bibitem{YYLi}
Y.~Li.
\newblock Harnack type inequality: the method of moving planes.
\newblock {\em Comm. Math. Phys.}, 200(2):421--444, 1999.

\bibitem{PrescSnLi}
Y.~Y. Li.
\newblock Prescribing scalar curvature on {$S^n$} and related problems. {I}.
\newblock {\em J. Differential Equations}, 120(2):319--410, 1995.

\bibitem{LinWeiWang11}
C.~Lin, J.~Wei, and L.~Wang.
\newblock Topological degree for solutions of fourth order mean field
  equations.
\newblock {\em Math. Z.}, 268(3):675--705, 2011.

\bibitem{MalchMartJEMS}
A.~Malchiodi and L.~Martinazzi.
\newblock Critical points of the {M}oser-{T}rudinger functional on a disk.
\newblock {\em J. Eur. Math. Soc. (JEMS)}, 16(5):893--908, 2014.

\bibitem{MartMan}
G.~Mancini and L.~Martinazzi.
\newblock The {M}oser-{T}rudinger inequality and its extremals on a disk via
  energy estimates.
\newblock {\em Calc. Var. Partial Differential Equations}, 56(4):Art. 94, 26,
  2017.

\bibitem{ThiMan2}
G.~Mancini and P.-D. Thizy.
\newblock Glueing a peak to a non-zero limiting profile for a critical
  {M}oser--{T}rudinger equation.
\newblock {\em J. Math. Anal. Appl.}, 472(2):1430--1457, 2019.

\bibitem{MartThiVet}
L.~Martinazzi, P.-D. Thizy, and J.~V\'{e}tois.
\newblock Sign-changing blow-up for the {M}oser-{T}rudinger equation.
\newblock {\em J. Funct. Anal.}, 282(2):Paper No. 109288, 85, 2022.

\bibitem{MoserIneq}
J.~Moser.
\newblock A sharp form of an inequality by {N}. {T}rudinger.
\newblock {\em Indiana Univ. Math. J.}, 20:1077--1092, 1970/71.

\bibitem{Nirenberg}
L.~Nirenberg.
\newblock {\em Topics in nonlinear functional analysis}.
\newblock Courant Institute of Mathematical Sciences, New York University, New
  York, 1974.
\newblock With a chapter by E. Zehnder, Notes by R. A. Artino, Lecture Notes,
  1973--1974.

\bibitem{Poho65}
S.~I. Poho\v{z}aev.
\newblock On the eigenfunctions of the equation ${\Delta} u+\lambda f(u)=0$.
\newblock {\em Dokl. Akad. Nauk SSSR}, 165:36--39, 1965 (in Russian).

\bibitem{Spa66}
E.~H. Spanier.
\newblock {\em Algebraic Topology}.
\newblock McGraw-Hill, New York-Toronto-London, 1966.

\bibitem{Struwe}
M.~Struwe.
\newblock A global compactness result for elliptic boundary value problems
  involving limiting nonlinearities.
\newblock {\em Math. Z.}, 187(4):511--517, 1984.

\bibitem{StruweCrit}
M.~Struwe.
\newblock Critical points of embeddings of {$H^{1,n}_0$} into {O}rlicz spaces.
\newblock {\em Ann. Inst. H. Poincar\'e Anal. Non Lin\'eaire}, 5(5):425--464,
  1988.

\bibitem{WhenExtremals}
P.-D. Thizy.
\newblock When does a perturbed {M}oser--{T}rudinger inequality admit an
  extremal ?
\newblock {\em Anal. PDE}, 13(5):1371--1415, 2020.

\bibitem{TrudingerOrlicz}
N.~S. Trudinger.
\newblock On imbeddings into {O}rlicz spaces and some applications.
\newblock {\em J. Math. Mech.}, 17:473--483, 1967.

\end{thebibliography}

\end{document}